\newcommand{\subgrp}[1]{\langle #1 \rangle}
\newcommand{\set}[1]{\left\{ #1 \right\}}
\newcommand{\mbf}[1]{\mathbf{#1}}
\newcommand{\wt}[1]{\widetilde{ #1}}
\newcommand{\wh}[1]{\widehat{ #1 }}
\DeclareMathOperator{\hght}{ht}
\DeclareMathOperator{\Ad}{Ad}
\newcommand{\Adl}{\Ad_\ell}
\newcommand{\Adr}{\Ad_r}
\DeclareMathOperator{\ch}{ch}
\DeclareMathOperator{\chr}{char}
\DeclareMathOperator{\coind}{coind}
\DeclareMathOperator{\cx}{cx}
\DeclareMathOperator{\Dist}{Dist}
\DeclareMathOperator{\End}{End}
\DeclareMathOperator{\Ext}{Ext}
\DeclareMathOperator{\gr}{gr}
\DeclareMathOperator{\opH}{H}
\DeclareMathOperator{\Hom}{Hom}
\DeclareMathOperator{\hy}{hy}
\DeclareMathOperator{\im}{im}
\DeclareMathOperator{\ind}{ind}
\DeclareMathOperator{\Lie}{Lie}
\DeclareMathOperator{\rad}{rad}
\DeclareMathOperator{\rank}{rank}
\DeclareMathOperator{\res}{res}
\DeclareMathOperator{\soc}{soc}
\DeclareMathOperator{\St}{St}
\newcommand{\Stprl}{\St_{p^r \ell}}
\DeclareMathOperator{\Tot}{Tot}
\DeclareMathAlphabet{\mathpzc}{OT1}{pzc}{m}{it}
\newcommand{\C}{\mathbb{C}}
\newcommand{\E}{\mathbb{E}}
\newcommand{\F}{\mathbb{F}}
\newcommand{\N}{\mathbb{N}}
\newcommand{\Q}{\mathbb{Q}}
\newcommand{\R}{\mathbb{R}}
\newcommand{\U}{\mathbb{U}}
\newcommand{\Z}{\mathbb{Z}}
\newcommand{\bfr}{\mathbf{r}}
\newcommand{\bfs}{\mathbf{s}}
\newcommand{\calF}{\mathcal{F}}
\newcommand{\calN}{\mathcal{N}}
\newcommand{\frakb}{\mathfrak{b}}
\newcommand{\g}{\mathfrak{g}}
\newcommand{\fraku}{\mathfrak{u}}
\newcommand{\scrA}{\mathscr{A}}
\newcommand{\scrZ}{\mathscr{Z}}
\newcommand{\upA}{\mathsf{A}}
\newcommand{\upB}{\mathsf{B}}
\newcommand{\upL}{\mathsf{L}}
\newcommand{\Bbul}{\upB_\bullet}
\newcommand{\Cbul}{C^\bullet}
\newcommand{\Qbul}{Q_\bullet}
\newcommand{\Ua}{U_\upA}
\newcommand{\Uagr}{U_\upA(G_r)}
\newcommand{\Uk}{U_k}
\newcommand{\Ukgr}{\Uk(G_r)}
\newcommand{\Uq}{\U_q}
\newcommand{\Uqg}{\U_q(\g)}
\newcommand{\Uqo}{\Uq^0}
\newcommand{\Uz}{U_\zeta}
\newcommand{\Uzb}{U_\zeta(B)}
\newcommand{\Uzbp}{U_\zeta(B^+)}
\newcommand{\Uzbr}{U_\zeta(B_r)}
\newcommand{\Uzbrp}{U_\zeta(B_r^+)}
\newcommand{\Uzbrt}{U_\zeta(B_rT)}
\newcommand{\Uzbrpt}{U_\zeta(B_r^+T)}
\newcommand{\Uzg}{U_\zeta(\g)}
\newcommand{\Uzgr}{U_\zeta(G_r)}
\newcommand{\Uzgrb}{U_\zeta(G_rB)}
\newcommand{\Uzgrbp}{U_\zeta(G_rB^+)}
\newcommand{\Uzgrt}{U_\zeta(G_rT)}
\newcommand{\UzN}{U_\zeta(N)}
\newcommand{\Uzo}{U_\zeta^0}
\newcommand{\Uztr}{U_\zeta(T_r)}
\newcommand{\Uzu}{U_\zeta(U)}
\newcommand{\Uzur}{U_\zeta(U_r)}
\newcommand{\Uzurp}{U_\zeta(U_r^+)}
\newcommand{\Uqxi}{U_{\Q(\xi)}}
\newcommand{\Uzxi}{U_{\Z[\xi]}}
\newcommand{\uz}{u_\zeta}
\newcommand{\uzb}{u_\zeta(\frakb)}
\newcommand{\uzg}{u_\zeta(\g)}
\newcommand{\uzu}{u_\zeta(\fraku)}
\newcommand{\uqxi}{u_{\Q(\xi)}}
\newcommand{\uzxi}{u_{\Z[\xi]}}
\newcommand{\whZr}{\wh{Z}_r}
\newcommand{\wtZr}{\wt{Z}_r}
\newcommand{\Lz}{L_\zeta}
\newcommand{\Lzr}{L_{\zeta,r}}
\newcommand{\whLzr}{\wh{L}_{\zeta,r}}
\newcommand{\wtLzr}{\wt{L}_{\zeta,r}}
\newcommand{\wtQzr}{\wt{Q}_{\zeta,r}}
\newcommand{\Qzr}{Q_{\zeta,r}}
\numberwithin{equation}{subsection}
\newtheorem{theorem}{Theorem}[subsection]
\newtheorem{proposition}[theorem]{Proposition}
\newtheorem{corollary}[theorem]{Corollary}
\newtheorem{lemma}[theorem]{Lemma}
\theoremstyle{definition}
\newtheorem{example}[theorem]{Example}
\newtheorem{definition}[theorem]{Definition}
\newtheorem{remark}[theorem]{Remark}
\newtheorem{remarks}[theorem]{Remarks}
\title[Representations and cohomology for Frobenius--Lusztig kernels]{Representations and cohomology for \\ Frobenius--Lusztig kernels}
\author{Christopher M.\ Drupieski}
\address{
Department of Mathematics \\
University of Georgia \\
Athens, GA~30602-7403}
\email{cdrup@math.uga.edu}
\thanks{The author was supported in part by NSF VIGRE grant DMS-0738586.}
\subjclass[2000]{Primary 20G42; Secondary 20G05, 20G10}
\begin{document}

\begin{abstract}
Let $U_\zeta$ be the quantum group (Lusztig form) associated to the simple Lie algebra $\mathfrak{g}$, with parameter $\zeta$ specialized to an $\ell$-th root of unity in a field of characteristic $p>0$. In this paper we study certain finite-dimensional normal Hopf subalgebras $U_\zeta(G_r)$ of $U_\zeta$, called Frobenius--Lusztig kernels, which generalize the Frobenius kernels $G_r$ of an algebraic group $G$. When $r=0$, the algebras studied here reduce to the small quantum group introduced by Lusztig. We classify the irreducible $U_\zeta(G_r)$-modules and discuss their characters. We then study the cohomology rings for the Frobenius--Lusztig kernels and for certain nilpotent and Borel subalgebras corresponding to unipotent and Borel subgroups of $G$. We prove that the cohomology ring for the first Frobenius--Lusztig kernel is finitely-generated when $\g$ has type $A$ or $D$, and that the cohomology rings for the nilpotent and Borel subalgebras are finitely-generated in general.
\end{abstract}

\maketitle

\section{Introduction} \label{section:introduction}

\subsection{Background}

In the last twenty years, many deep connections have been discovered between the representation theory for algebraic groups and that for quantized enveloping algebras (quantum groups). For example, let $G$ be a semisimple, simply-connected algebraic group over an algebraically closed field of characteristic $p>0$, let $\g$ be the Lie algebra of $G$, and let $\Uz$ be the quantum group (Lusztig form) associated to $\g$, with parameter $\zeta \in \C$ an $\ell$-th root of unity. Let $G_1 \subset G$ be the first Frobenius kernel of $G$, and let $\uz \subset \Uz$ be Lusztig's small quantum group. A long outstanding problem has been to compute the characters of the irreducible rational $G$-modules. In their seminal paper \cite{Andersen:1994}, Andersen, Jantzen and Soergel proved that if $p$ is sufficiently large, then every irreducible $G_1$-module can be obtained via reduction mod $p$ from an irreducible $\uz$-module. In particular, if $p$ is sufficiently large, then the Lusztig character formula for the characters of irreducible rational $G$-modules can be deduced from the corresponding character formula for irreducible integrable $\Uz$-modules. Other deep connections between algebraic and quantum groups include the fact that if $p$ and $\ell$ are both greater than the Coxeter number of $\g$, then the cohomology rings for $G_1$ and $\uz$ are both isomorphic to the coordinate ring of the variety of nilpotent elements in $\g$ \cite{Andersen:1984,Friedlander:1986a,Ginzburg:1993}.

Though quantum groups can be defined with the parameter $\zeta$ taken to be in any field $k$, relatively little specific attention has been paid to the case when $k$ has characteristic $p > 0$. Just as the algebraic group $G$ in characteristic $p$ possesses a tower of normal subgroup schemes, $G_1 \subset G_2 \subset \cdots \subset G$, its Frobenius kernels, so too does a quantum group in characteristic $p$ possess a tower of normal Hopf subalgebras, $\uz = \Uz(G_0) \subset \Uz(G_1) \subset \Uz(G_2) \subset \cdots \subset \Uz$, which we call the Frobenius--Lusztig kernels of $\Uz$.

In this paper we study the representation theory and cohomology of the Frobenius--Lusztig kernels of a quantum group $\Uz$ defined over a field of characteristic $p$. Our results simultaneously generalize classical results on the Frobenius kernels of algebraic groups, as well as results of Andersen, Polo and Wen \cite{Andersen:1992a,Andersen:1992} for the small quantum group. It is our hope that the Frobenius--Lusztig kernels of a quantum group in characteristic $p$ might prove as useful to the study of representations and cohomology for $\Uz$ as have the Frobenius kernels to the study of representations and cohomology for the algebraic group $G$ and its finite subgroups.
 
\subsection{Organization}

The paper is organized as follows. In Section \ref{section:preliminaries} we establish notation and recall basic notions about quantum groups. Then in Section \ref{section:representationtheory} we establish basic facts about the representation theory of the Frobenius--Lusztig kernels of $\Uz$. We prove that $\Uzgr$ is a normal subalgebra of $\Uz$, and we classify the irreducible $\Uzgr$-modules, showing that each irreducible $\Uzgr$-module is the restriction of an irreducible $\Uz$-module with highest weight lying in a suitable restricted region. Section \ref{section:representationtheory} culminates with our proof that if $p \gg 0$, then the characters of the simple $\uz$-modules in characteristic $p$ are the same as in characteristic zero, hence are given by the Lusztig character formula for quantum groups \cite[II.H.12]{Jantzen:1996}. It is an interesting open problem to determine if there is a value of $p$ for which the characteristic $p$ and characteristic zero characters of simple $\uz$-modules do not agree.

Sections \ref{section:Hopfalgebrasoncohomology}--\ref{section:higherFLkernels} are devoted to studying the cohomology of the Frobenius--Lusztig kernels, specifically, the cohomology ring $\opH^\bullet(\Uzgr,k)$, and also the cohomology rings for the Borel and nilpotent subalgebras of $\Uzgr$ corresponding to a Borel subgroup $B$ of $G$ and its unipotent radical $U \subset B$. A major open problem in the study of finite-dimensional Hopf algebras is to determine whether the associated cohomology ring is finitely generated. Given a Hopf algebra $H$ over the field $k$, the cohomology ring $\opH^\bullet(H,k)$ is known to be finitely-generated if $H$ is cocommutative, by work of Friedlander and Suslin \cite{Friedlander:1997}, or if $H$ is pointed and $\chr(k) = 0$, by work of Mastnak, Pevtsova, Schauenburg and Witherspoon \cite{Mastnak:2010}. The higher Frobenius--Lusztig kernels of a quantum group in characteristic $p$ represent another infinite class of noncommutative, non-cocommutative finite-dimensional Hopf algebras, so it would be interesting to determine if their cohomology rings are finitely-generated as well.

We begin in Section \ref{section:Hopfalgebrasoncohomology} by describing a general theory for the actions of Hopf algebras on cohomology groups. Then in Section \ref{section:firstFLkernel} we are able to imitate the inductive approach of Friedlander and Parshall \cite{Friedlander:1986} to prove (with some mild restrictions on $\ell$, and assuming $\g$ to be of type $A$ or $D$) the finite-generation of the cohomology ring $\opH^\bullet(\Uz(G_1),k)$ for the first Frobenius--Lusztig kernel of $\Uz$. Unlike in the classical situation for algebraic groups, there is no canonical way to embed an arbitrary quantum group into one whose associated Lie algebra is of type $A$, so we are not able to prove finite-generation of $\opH^\bullet(\Uz(G_1),k)$ in general at this time. The inductive approach to studying the first Frobenius--Lusztig kernel requires calculating the cohomology ring $\opH^\bullet(\uz,k)$ for the small quantum group in characteristic $p$; this we do in Section \ref{subsection:uzgcohomology}. It is our opinion that the theory of Section \ref{section:Hopfalgebrasoncohomology} makes more transparent the Hopf algebra actions considered by Ginzburg and Kumar \cite{Ginzburg:1993} in their characteristic zero calculation of $\opH^\bullet(\uz,\C)$.

In Section \ref{section:higherFLkernels} we study, for all $r \geq 0$, the cohomology rings $\opH^\bullet(\Uzbr,k)$ and $\opH^\bullet(\Uzur,k)$ for the subalgebras of $\Uzgr$ corresponding to the Borel subgroup $B$ and its unipotent radical $U$. We prove that $\opH^\bullet(\Uzur,k)$ and $\opH^\bullet(\Uzbr,k)$ are finitely-generated as rings. A version of our results on $\opH^\bullet(\Uzur,k)$, appearing in the author's thesis \cite{Drupieski:2009}, was used by Feldvoss and Witherspoon \cite[Theorem 4.2]{Feldvoss:2010} as part of their proof that the principal block of the small quantum group is of wild representation type. Finally, in Section \ref{subsection:finitecomplexity} we provide some circumstantial evidence for the finite-generation of $\opH^\bullet(\Uzgr,k)$ in general. Specifically, we prove that the complexity of finite-dimensional $\Uzgr$-modules is finite and uniformly bounded.

\section{Preliminaries} \label{section:preliminaries}

\subsection{Quantized enveloping algebras} \label{subsection:QEAs}

Let $\Phi$ be a finite, indecomposable root system. Fix a set of simple roots $\Pi \subset \Phi$, and let $\Phi^+$ and $\Phi^-$ be the corresponding sets of positive and negative roots in $\Phi$. Write $W$ for the Weyl group of $\Phi$. It is generated by the set of simple reflections $\set{s_\alpha:\alpha \in \Pi}$. Let $\ell: W \rightarrow \N$ be the length function on $W$.

Let $\Z\Phi$ be the root lattice of $\Phi$. It spans a real vector space $\mathbb{E}$, possessing a positive definite, $W$-invariant inner product $(\cdot,\cdot)$, normalized so that $(\alpha,\alpha) = 2$ if $\alpha \in \Phi$ is a short root. Given $\alpha \in \Phi$, let $\alpha^\vee = 2\alpha/(\alpha,\alpha)$ be the dual root. Let $X = \set{\lambda \in \E: (\lambda,\alpha^\vee) \in \Z \; \forall \alpha \in \Phi}$ be the weight lattice of $\Phi$. It is spanned by the set of fundamental dominant weights $\set{\varpi_\alpha:\alpha \in \Pi}$, which are defined by the equations $(\varpi_\alpha,\beta^\vee) = \delta_{\alpha,\beta}$ ($\beta \in \Pi$). Let $X^+ = \set{\lambda \in X: (\lambda,\alpha^\vee) \geq 0 \; \forall \alpha \in \Phi^+}$ be the subset of dominant weights. Set $\rho = \frac{1}{2} \sum_{\alpha \in \Phi^+} \alpha = \sum_{\alpha \in \Pi} \varpi_\alpha$, and let $\alpha_0 \in \Phi^+$ be the highest short root. Then the Coxeter number $h$ of $\Phi$ is defined by $h = (\rho,\alpha_0^\vee)+1$.

Let $q$ be an indeterminate. The quantized enveloping algebra $\Uq$ is the $\Q(q)$-algebra defined by the generators $\set{E_\alpha,F_\alpha,K_\alpha,K_\alpha^{-1}:\alpha \in \Pi}$ and the relations in \cite[4.3]{Jantzen:1996}. The algebra $\Uq$ admits the structure of a Hopf algebra, with comultiplication $\Delta$, counit $\varepsilon$, and antipode $S$ defined in \cite[4.8]{Jantzen:1996}.

Set $\upA = \Z[q,q^{-1}]$. For $a \in \Z$, put $[a] = \frac{q^a-q^{-a}}{q-q^{-1}} \in \upA$, and for $n \in \N$, define $[n]^! = [n] [n-1] \cdots [2] [1]$. For $a \in \Z$, define $[a]_\alpha$ to be the image in $\upA$ of $[a]$ under the ring homomorphism $\upA \rightarrow \upA$ mapping $q \mapsto q_\alpha:=q^{(\alpha,\alpha)/2}$. Let $\Ua$ be the $\upA$-subalgebra of $\Uq$ generated by
\[
\set{E_\alpha^{(n)},F_\alpha^{(n)},K_\alpha,K_\alpha^{-1}:\alpha \in \Pi, n \in \N},
\]
where the divided powers $E_\alpha^{(n)},F_\alpha^{(n)} \in \Uq$ are defined by $E_\alpha^{(n)} = E_\alpha^n/[n]_\alpha^!$ and $F_\alpha^{(n)} = F_\alpha^n/[n]_\alpha^!$. Then $\Ua$ is a free $\upA$-module and an $\upA$-form for $\Uq$ \cite{Lusztig:1990a}. It is also a Hopf subalgebra of $\Uq$. For any $\upA$-algebra $\Gamma$, write $U_\Gamma$ for the algebra $\Ua \otimes_{\upA} \Gamma$. We follow the usual convention of writing the superscripts $+$, $-$ and $0$ to denote the positive, negative, and toral subalgebras of $\Uq$, of the $\upA$-form $\Ua$, and of the specializations $U_\Gamma$. Then, for example, $\Uq^+$ is the subalgebra of $\Uq$ generated by the set $\{E_\alpha: \alpha \in \Pi \}$. There exists an involutory $\Q(q)$-algebra automorphism $\omega$ of $\Uq$ satisfying $\omega(E_\alpha) = F_\alpha$, $\omega(F_\alpha) = E_\alpha$, and $\omega(K_\alpha) = K_\alpha^{-1}$ ($\alpha \in \Pi$). It descends to an automorphism of $\Ua$.

\subsection{Frobenius--Lusztig kernels} \label{subsection:FLkernels}

Let $k$ be a field of characteristic $p \neq 2$, and $p \neq 3$ if $\Phi$ has type $G_2$. Let $\ell \in \N$ be an odd positive integer, with $\ell$ coprime to $p$, and also $\ell$ coprime to 3 if $\Phi$ has type $G_2$. (There should be no confusion between the use of $\ell$ as an integer, and the use of $\ell$ for the length function on $W$.) Fix a primitive $\ell$-th root of unity $\zeta \in k$. Then $k$ is naturally an $\upA$-module under the specialization $q \mapsto \zeta$. Define $\Uz$ to be the quotient of $\Uk = \Ua \otimes_{\upA} k$ by the two-sided ideal $\subgrp{K_\alpha^\ell \otimes 1 - 1 \otimes 1 : \alpha \in \Pi}$. In the language of \cite{Andersen:1992a,Andersen:1992,Andersen:1991}, every $\Uz$-module is a $\Uk$-module of type 1. Conversely, every $\Uk$-module of type 1 is automatically a $\Uz$-module. By abuse of notation, we denote from now on the generators of $\Ua$ as well as their images in $\Uk$ and $\Uz$ by the same symbols.

The elements $\set{E_\alpha,F_\alpha,K_\alpha: \alpha \in \Pi}$ of $\Uk$ generate a finite-dimensional Hopf subalgebra $u_k$ of $\Uk$. We denote the image of $u_k$ in $\Uz$ by $\uz$, and call this latter algebra the small quantum group. It is a normal Hopf subalgebra of $\Uz$ (see Section \ref{subsection:normality}), and the quotient $\Uz//\uz$ is isomorphic as a Hopf algebra to $\Dist(G)$, the algebra of distributions on the simple, simply-connected algebraic group $G$ over $k$ with root system $\Phi$. (For details on $\Dist(G)$, see \cite[II.1.12]{Jantzen:2003}.) The algebra $\Dist(G)$ is also known as the hyperalgebra of $G$, and denoted by $\hy(G)$. The quotient map $F_\zeta: \Uz \rightarrow \Dist(G)$ was constructed by Lusztig \cite[\S 8]{Lusztig:1990a}, and is called the quantum Frobenius morphism. For this reason, the algebra $\uz$ is also called the Frobenius--Lusztig kernel of $\Uz$. Given a $\Dist(G)$-module $V$, write $V^{[1]}$ for $V$ considered as a $\Uz$-module via the morphism $F_\zeta: \Uz \rightarrow \Dist(G)$.

Set $\g = \Lie(G)$, the Lie algebra of $G$. If we wish to emphasize the dependence of the algebras $\Uq$, $\Uz$, $\uz$, etc., on the root system $\Phi$ of $\g$, then we write $\Uq = \Uqg$, $\Uz = \Uzg$, $\uz = \uzg$, etc.

Fix $r \in \N$, and suppose $p = \chr(k) > 0$. Define $\Uzgr$ to be the subalgebra of $\Uz$ generated by
\begin{equation} \label{eq:kernelgenerators}
\set{E_\alpha,E_\alpha^{(p^i \ell)}, F_\alpha, F_\alpha^{(p^i \ell)}, K_\alpha : \alpha \in \Pi, 0 \leq i \leq r-1}.
\end{equation}
Then $\Uzgr$ is a finite-dimensional Hopf subalgebra of $\Uz$, and $F_\zeta(\Uzgr) = \Dist(G_r)$, the algebra of distributions on the $r$-th Frobenius kernel $G_r$ of $G$. We call $\Uzgr$ the $r$-th Frobenius--Lusztig kernel of $\Uz$. By definition, the zeroth Frobenius--Lusztig kernel, $\Uz(G_0)$, is just the small quantum group $\uz$. We refer to the $\Uzgr$ with $r \geq 1$ as the higher Frobenius--Lusztig kernels of $\Uz$.  The higher Frobenius--Lusztig kernels of $\Uz$ are defined only if $p = \chr(k) > 0$. Indeed, if $\chr(k) = 0$ and $r \geq 1$, then the subalgebra of $\Uz$ generated by \eqref{eq:kernelgenerators} is all of $\Uz$ \cite[Proposition 3.2]{Lusztig:1989}.

\subsection{Braid group automorphisms and integral bases} \label{subsection:braidgroupauto}

Let $w_0 = s_{\beta_1} \cdots s_{\beta_N}$ be a reduced expression for the longest word $w_0 \in W$. For each $\gamma \in \Phi^+$, there exist root vectors $E_\gamma \in \Uq^+$ and $F_\gamma \in \Uq^-$, defined in terms of certain braid group operators on $\Uq$, and depending on the chosen reduced expression for $w_0$ \cite[Appendix]{Lusztig:1990a}.

For $1 \leq i \leq N$, set $w_i = s_{\beta_1} \cdots s_{\beta_{i-1}}$ (so $w_1 = 1$), and set $\gamma_i = w_i(\beta_i)$. Then $\Phi^+ = \set{\gamma_1,\ldots,\gamma_N}$, and $\upA$-bases for $\Ua^-$ and $\Ua^+$, respectively, are given by the collections of monomials
\begin{align}
& \set{F^{(\bfr)} = F_{\gamma_1}^{(r_1)} \cdots F_{\gamma_N}^{(r_N)}: \bfr = (r_1,\ldots,r_N) \in \N^N}, \text{ and} \label{eq:Fdivpowerbasis} \\
& \set{E^{(\bfs)} = E_{\gamma_1}^{(s_1)} \cdots E_{\gamma_N}^{(s_N)}: \bfs = (s_1,\ldots,s_N) \in \N^N} \label{eq:Edivpowerbasis}
\end{align}
where $E_\gamma^{(n)}$ and $F_\gamma^{(n)}$ for arbitrary $\gamma \in \Phi^+$ are defined in \cite[\S 5.1]{Lusztig:1990a}. An $\upA$-basis for the $\upA$-form $\Ua^0$ of $\Uqo$ is described in \cite[Theorem 6.7]{Lusztig:1990a}.

Multiplication induces $\upA$-module isomorphisms $\Ua^+ \otimes_\upA \Ua^0 \otimes_\upA \Ua^- \cong \Ua \cong \Ua^- \otimes_\upA \Ua^0 \otimes_\upA \Ua^+$ (triangular decomposition). We thus obtain for any $\upA$-algebra $\Gamma$ a $\Gamma$-basis for $U_\Gamma$. The $\upA$-bases for $\Ua^+$ and $\Ua^-$ project onto $k$-bases for $U_\zeta^+$ and $U_\zeta^-$, and there exist similar vector space isomorphisms $\Uz^+ \otimes \Uzo \otimes \Uz^- \cong \Uz \cong \Uz^- \otimes \Uzo \otimes \Uz^+$.

\subsection{Distinguished subalgebras} \label{subsection:subalgebras}

Fix a maximal torus $T \subset G$ such that $\Phi$ is the root system of $T$ in $G$. Let $B \subset G$ be the Borel subgroup of $G$ containing $T$ and corresponding to $\Phi^-$. Let $U$ be the unipotent radical of $B$. To the subgroup schemes $U$, $B$, $U_r$, $B_r$, $T_r$, $B_rT$, $G_rT$, and $G_rB$ of $G$, we associate certain distinguished subalgebras of $U_\zeta$. Define $\Uzu = \Uz^-$, $\Uzb = \Uz^- \Uz^0$, $\Uzur = \Uz^- \cap \Uzgr$, $\Uzbr = \Uzb \cap \Uzgr$, $\Uztr = \Uzo \cap \Uzgr$, $\Uzbrt = \Uzbr \Uzo$, $\Uzgrt = \Uzgr \Uzo$, and $\Uzgrb = \Uzgr \Uzb$. Then, for example, $\Uzur$ admits a basis consisting of all monomials in \eqref{eq:Fdivpowerbasis} with $0 \leq a_i < p^r \ell$ for all $1 \leq i \leq N$. Replacing $B$ by its opposite Borel subgroup $B^+$, we similarly define the opposite subalgebras $U_\zeta(U_r^+)$, $U_\zeta(B_r^+)$, etc., of $\Uz$. The automorphism $\omega$ induces isomorphisms $\Uz^- \stackrel{\sim}{\rightarrow} \Uz^+$, $\Uzur \stackrel{\sim}{\rightarrow} U_\zeta(U_r^+)$, etc.

If $r > 0$, then the distinguished subalgebras defined in the previous paragraph generalize those studied in \cite{Andersen:1992a,Andersen:1992}. When $r=0$, set $\uzg = \Uzgr$, $\uzu = \Uzur$, and $\uzb = \Uzbr$. Here $\fraku = \Lie(U)$ and $\frakb = \Lie(B)$. These notations are meant to emphasize the similarity between the algebras $\uzg$, $\uzu$, $\uzb$ and the restricted enveloping algebras $u(\g)$, $u(\fraku)$, $u(\frakb)$ of the $p$-restricted Lie algebras $\g$, $\fraku$, $\frakb$.

\section{Representation theory} \label{section:representationtheory}

\subsection{Normality} \label{subsection:normality}

Let $A$ be a Hopf algebra, and let $B \subseteq A$ be a subalgebra. Then $B$ is called a normal subalgebra of $A$ if $B$ is closed under the left and right adjoint actions of $A$ on itself. Write $B_+$ for the augmentation ideal of $B$. If $B$ is normal in $A$, then $AB_+ = B_+A$, i.e., the left and right ideals in $A$ generated by $B_+$ are equal \cite[Lemma 3.4.2]{Montgomery:1993}. For $B$ normal in $A$, put $A//B = A/(AB_+)$. If $B$ is a normal Hopf subalgebra of $A$, then $A//B$ inherits from $A$ the structure of Hopf algebra. In this section we show that the Frobenius--Lusztig kernels are normal in $\Uz$.

\begin{proposition} \label{proposition:adjointstable}
Let $r \geq 0$. Then $\Uzgr$ is normal in $\Uz$.
\end{proposition}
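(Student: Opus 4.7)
The plan is to reduce normality to a finite check on generators of $\Uz$ and then to deduce the check from the normality of $\Dist(G_r)$ in $\Dist(G)$ via the quantum Frobenius morphism $F_\zeta : \Uz \to \Dist(G)$. The case $r = 0$ (normality of $\uz$) I would take as a base case from Andersen--Polo--Wen \cite{Andersen:1992a,Andersen:1992}. First I would verify that $\Uzgr$ is itself a Hopf subalgebra of $\Uz$ by inspecting $\Delta$, $\varepsilon$, and $S$ on the algebra generators listed in \eqref{eq:kernelgenerators}; the key point is that for $0 \leq i < r$, the coproduct of $E_\alpha^{(p^i \ell)}$ involves only divided powers $E_\alpha^{(s)}$ with $s \leq p^i \ell < p^r \ell$, all of which lie in $\Uzgr$. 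Since the set $\set{a \in \Uz : \Adl(a)(\Uzgr) \subseteq \Uzgr \text{ and } \Adr(a)(\Uzgr) \subseteq \Uzgr}$ is a subalgebra of $\Uz$ containing $\Uzgr$, it then suffices to verify adjoint stability for the remaining algebra generators of $\Uz$, namely the divided powers $E_\alpha^{(p^i \ell)}$ and $F_\alpha^{(p^i \ell)}$ with $\alpha \in \Pi$ and $i \geq r$.

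For these remaining generators I would combine two ingredients. First, applying $F_\zeta$ to $\Adl(a)(b)$ for $a \in \Uz$ and $b \in \Uzgr$ gives $\Adl(F_\zeta(a))(F_\zeta(b)) \in \Dist(G_r)$, since $G_r$ is a normal closed subgroup scheme of $G$ and so $\Dist(G_r)$ is normal in $\Dist(G)$. Second, the base case yields $\ker(F_\zeta) = (\uz)_+ \Uz = \Uz (\uz)_+$, and since $\uz \subseteq \Uzgr$ this kernel sits inside both $(\Uzgr)_+ \Uz$ and $\Uz (\Uzgr)_+$. It follows that both of these ideals coincide with the $F_\zeta$-preimage of the common two-sided ideal $\Dist(G_r)_+ \Dist(G) = \Dist(G) \Dist(G_r)_+$, giving the ideal equality $(\Uzgr)_+ \Uz = \Uz (\Uzgr)_+$. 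For a Hopf subalgebra with bijective antipode this ideal condition is well known to be equivalent to adjoint stability (cf.\ \cite[\S 3.4]{Montgomery:1993}), yielding the normality of $\Uzgr$ in $\Uz$.

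The main obstacle on this route is the invocation of the ideal-to-normality equivalence. To sidestep it, one could instead mimic Andersen--Polo--Wen's proof for the $r = 0$ case by directly expanding $\Adl(E_\alpha^{(p^i \ell)})(b)$ and $\Adr(E_\alpha^{(p^i \ell)})(b)$ for each generator $b$ of $\Uzgr$, using Lusztig's commutation relations \cite{Lusztig:1990a} and the coproduct formula for $E_\alpha^{(n)}$. The combinatorial difficulty on this second route is that for $i \geq r$ the summands in $\Delta(E_\alpha^{(p^i \ell)})$ involve divided powers of order $\geq p^r \ell$, so one must verify via quantum binomial identities at the specialization $q = \zeta$ in characteristic $p$ that the intermediate terms recombine into PBW monomials all of whose divided-power exponents lie below $p^r \ell$.
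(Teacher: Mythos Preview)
Your argument correctly establishes the ideal equality $(\Uzgr)_+\Uz = \Uz(\Uzgr)_+$ by pulling back along $F_\zeta$ and using the base case, but the step where you deduce adjoint stability from this equality is a genuine gap. The reference to \cite[\S 3.4]{Montgomery:1993} gives only the forward implication (adjoint stability $\Rightarrow$ ideal equality); the converse is not in that section, and in general it requires $H$ to be faithfully flat over $K$ (Schneider). In the paper this flatness is Lemma~\ref{lemma:kernelfreeoveranother}, proved much later using the Steinberg module and the representation theory of \S\ref{section:representationtheory}, so invoking it here would at best reorder the logic and at worst be circular. Your fallback of expanding $\Adl(E_\alpha^{(p^i\ell)})(b)$ directly is in principle viable, but you have not carried it out, and as you note the combinatorics at $q=\zeta$ in characteristic $p$ are unpleasant.

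The paper sidesteps all of this with a change-of-base trick you did not consider. Since $\phi_{p^r\ell}$ factors over $\F_p$ as a power of $\phi_\ell$, the specialization $\upA \to k$ sending $q \mapsto \zeta$ factors through $\Z[\xi]$ for $\xi \in \C$ a primitive $p^r\ell$-th root of unity. Over $\Q(\xi)$, the subalgebra generated by \eqref{eq:kernelgenerators} is exactly the small quantum group at the $p^r\ell$-th root of unity $\xi$, and normality of \emph{that} algebra is the characteristic-zero base case (cited from \cite{Lin:1993}). Adjoint stability over $\Z[\xi]$ then descends to $k$. This reduces the entire proposition, for every $r$, to the single characteristic-zero statement that a small quantum group is normal in its Lusztig form, with no need for the ideal-to-normality converse or for explicit commutator computations.
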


\begin{proof}
First, if $\chr(k) = 0$ (so also $r=0$), then $\uz$ is normal in $\Uz$ by \cite[Proposition 5.3]{Lin:1993}. So assume that $p = \chr(k) > 0$. Consider \eqref{eq:kernelgenerators} as a subset of the algebra $\Uk$, and let $\Ukgr$ denote the subalgebra it generates. Then it suffices to show that $\Ukgr$ is normal in $\Uk$. Recall that for a Hopf algebra $H$ with bijective antipode $S$, the left and right adjoint actions are related via the equation
\[
\Adr(h) = S \circ \Adl( S^{-1}(h)) \circ S^{-1} \qquad (h \in H).
\]
So it suffices even to show that $\Ukgr$ is stable under the left adjoint action of $\Uk$ on itself.

Given $n \in \N$, let $\phi_n$ be the $n$-th cyclotomic polynomial. By our assumptions on $\ell$ and $p$, the product $p^r\ell$ is odd and coprime to 3 if $\Phi$ has type $G_2$, so it satisfies the same assumptions as $\ell$. The map $\upA = \Z[q,q^{-1}] \rightarrow k$ sending $q \mapsto \zeta$ maps $\phi_\ell$ to zero. Let $\varphi$ be Euler's totient function. According to \cite{Guerrier:1968}, the polynomial $\phi_{p^r\ell}$ factors over $\F_p$ as $(\phi_\ell)^{\varphi(p^r)}$. Then the map $\upA \rightarrow k$ factors through the quotient $\upA/(\phi_{p^r\ell})$. Let $\xi \in \C$ be a primitive $p^r\ell$-th root of unity. Then the quotient $\upA/(\phi_{p^r\ell})$ identifies with the subring $\Z[\xi]$ of the cyclotomic field $\Q(\xi)$.

Let $\Uagr$ be the subalgebra of $\Ua$ generated by \eqref{eq:kernelgenerators}, considered now as a subset of $\Ua$. Then $\Uagr \otimes_\upA k = \Ukgr$. Also, $\Adl(\Ua)(\Uagr) \subseteq \Ua$, because $\Ua$ is a Hopf subalgebra of $\Uq$. To show that $\Adl(\Uk)(\Ukgr) \subseteq \Ukgr$, it suffices to show that $(\Adl(\Ua)(\Uagr)) \otimes_\upA k \subseteq \Ukgr$. Observe that $\Uagr \otimes_\upA \Q(\xi) = \uqxi$, the subalgebra of $\Uqxi$ that projects onto the small quantum group associated to the $p^r\ell$-th root of unity $\xi \in \Q(\xi)$. Then by the case $\chr(k)=0$, $(\Adl(\Ua)(\Uagr)) \otimes_\upA \Z[\xi] \subseteq \uqxi = \Uagr \otimes_\upA \Q[\xi]$. Since base change from $\upA$ to $k$ factors through $\Z[\xi]$, we get
\begin{align*}
(\Adl(\Ua)(\Uagr)) \otimes k &= \left( \Adl(\Ua)(\Uagr) \otimes_\upA \Z[\xi] \right) \otimes_{\Z[\xi]} k \\
&\subseteq (\Uagr \otimes_\upA \Z[\xi]) \otimes_{\Z[\xi]} k = \Ukgr. \qedhere
\end{align*}
\end{proof}

\begin{corollary} \label{corollary:uzbstable}
The algebra $\uzb$ is stable under the right adjoint action of $\Uzb$.
\end{corollary}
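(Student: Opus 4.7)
The plan is to reduce the statement to two facts already in hand: the normality of $\uz$ in $\Uz$ (Proposition 3.1.1), and the fact that $\Uzb$ is itself a Hopf subalgebra of $\Uz$. The bridge between them is the identity $\uzb = \Uzb \cap \uz$, which is immediate from the definitions in Section 2.4: the general formula $\Uzbr = \Uzb \cap \Uzgr$ specializes at $r = 0$ to $\uzb = \Uzb \cap \Uz(G_0) = \Uzb \cap \uz$.

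Given this identification, I would argue in three short steps. First, by Proposition 3.1.1, $\uz$ is normal in $\Uz$, so it is stable under $\Adr(\Uz)$, and in particular under the restricted action $\Adr(\Uzb)$. Second, $\Uzb = \Uz^- \Uz^0$ is a Hopf subalgebra of $\Uz$: this is checked by verifying on the algebra generators $\set{F_\alpha^{(n)}, K_\alpha^{\pm 1} : \alpha \in \Pi, n \in \N}$ that $\Delta$, $\varepsilon$, and $S$ of $\Uz$ restrict to $\Uzb$ (e.g.\ $\Delta(F_\alpha) = F_\alpha \otimes K_\alpha^{-1} + 1 \otimes F_\alpha$, $\Delta(K_\alpha) = K_\alpha \otimes K_\alpha$, $S(K_\alpha) = K_\alpha^{-1}$, with divided powers treated similarly). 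Since any Hopf subalgebra is automatically stable under its own right adjoint action, $\Adr(\Uzb)(\Uzb) \subseteq \Uzb$.

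Finally, I combine: for $b \in \Uzb$ and $x \in \uzb = \Uzb \cap \uz$, the first step gives $\Adr(b)(x) \in \uz$ and the second gives $\Adr(b)(x) \in \Uzb$, so $\Adr(b)(x) \in \Uzb \cap \uz = \uzb$. There is no real obstacle; the corollary is essentially a formal consequence of Proposition 3.1.1 once one notices that $\uzb$ sits as the intersection of the small quantum group with the Borel subalgebra.
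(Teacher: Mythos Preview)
Your proof is correct and follows the same line as the paper's: both arguments use that $\uzb = \Uzb \cap \uzg$, that $\Uzb$ is a Hopf subalgebra (hence stable under its own right adjoint action), and that $\uzg$ is $\Adr(\Uz)$-stable by Proposition~\ref{proposition:adjointstable}, then intersect. The paper's version is simply the one-line form $\Adr(\Uzb)(\uzb) \subseteq \Uzb \cap \uzg = \uzb$.
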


\begin{proof}
The algebra $\Uzb$ is a Hopf algebra containing $\uzb$, and $\uzb \subset \uzg$. Then $\Adr(\Uzb)(\uzb) \subseteq \Uzb \cap \uzg = \uzb$.
\end{proof}

\subsection{Local finiteness}

Let $N$ be one of the distinguished subgroup schemes of $G$ identified in Section \ref{subsection:subalgebras}. Recall the definition, due to Andersen, Polo and Wen \cite{Andersen:1992a,Andersen:1992,Andersen:1991}, of the category of integrable $\UzN$-modules. If $N$ is a finite subgroup scheme of $G$, then every $\UzN$-module is integrable. Otherwise, a $\UzN$-module $V$ is integrable if it satisfies the following two conditions:
\begin{enumerate}
\item If $T \subset N$, then $V$ admits a weight space decomposition for $\Uzo$ (in the sense of \cite[\S 1.2]{Andersen:1991}).
\item Given $\alpha \in \Phi^+$, let $U_\alpha \subset B$ and $U_\alpha^+ \subset B^+$ be the corresponding root subgroups of $G$. Let $v \in V$. If $U_\alpha \subset N$, then $F_\alpha^{(n)}.v = 0$ for all $n \gg 0$. If $U_\alpha^+ \subset N$, then $E_\alpha^{(n)}.v = 0$ for all $n \gg 0$.
\end{enumerate}

Our goal now is to study the integrable representation theory of the distinguished  subalgebras $\Uzgr$, $\Uzgrt$, and $\Uzgrb$ defined in Section~\ref{subsection:subalgebras}. Specifically, we wish to characterize the simple integrable modules for these algebras. When $r=0$, our results reproduce those of Andersen, Polo and Wen \cite{Andersen:1992a,Andersen:1992} for the small quantum group. We begin our investigation with the following lemma, which implies that the simple modules we wish to study are all finite-dimensional. First some notation: Given $\lambda \in X$, there exists a one-dimensional integrable $\Uzb$-module (resp.\ $\Uzbp$-module) of $\Uzo$-weight $\lambda$, with trivial $\Uz^-$ (resp.\ $\Uz^+$)-action; denote it by the symbol $\lambda$.

\begin{lemma} \label{lemma:locallyfinite}
Let $V$ be an integrable $\UzN$-module. Then $V$ is locally finite, i.e., every finitely-generated submodule of $V$ is contained in a finite-dimensional submodule of $V$.
\end{lemma}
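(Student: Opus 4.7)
The plan is to reduce the lemma to showing that for each $v \in V$, the cyclic submodule $\UzN.v$ is finite-dimensional; since any finitely-generated submodule is the sum of finitely many cyclic submodules, this suffices. When $N$ is one of the finite subgroup schemes $U_r$, $B_r$, $T_r$, $G_r$, the algebra $\UzN$ is itself finite-dimensional and the conclusion is immediate. For the remaining cases $N \in \{U,B,B_rT,G_rT,G_rB\}$ and their $\omega$-opposites listed in Section~\ref{subsection:subalgebras}, I would use the factorization of $\UzN$ as a product of one of the finite Frobenius--Lusztig kernels with $\Uzo$ and, when $N \supseteq U$ or $N \supseteq U^+$, one of the unipotent subalgebras $\Uz^-$ or $\Uz^+$. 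Since the finite factor contributes only a bounded multiplicative increase in dimension, the task reduces to controlling the action of $\Uzo$ together with $\Uz^\pm$ on a single vector.

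If $T \subset N$, condition (a) of integrability lets me decompose $v$ as a finite sum of $\Uzo$-weight vectors and reduce to the case where $v$ is a single weight vector; then $\Uzo$ acts on $v$ by a scalar character. If $T \not\subset N$ (so $N = U$ or $U^+$), there is no toral part to deal with. Either way the problem reduces to showing that $\Uz^-.v$ (or symmetrically $\Uz^+.v$) is finite-dimensional, under the hypothesis that condition (b) of integrability holds throughout $V$ for every $\alpha \in \Phi^+$.

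For this reduced problem I would use the PBW-type basis \eqref{eq:Fdivpowerbasis} to express $\Uz^-.v$ as the span of vectors $F_{\gamma_1}^{(r_1)} F_{\gamma_2}^{(r_2)} \cdots F_{\gamma_N}^{(r_N)}.v$ with $\bfr \in \N^N$, and then proceed by downward induction on the index $i$. Assume inductively that as $(r_{i+1},\ldots,r_N)$ ranges over admissible tuples, the nonzero partial products $w_i := F_{\gamma_{i+1}}^{(r_{i+1})} \cdots F_{\gamma_N}^{(r_N)}.v$ form a finite subset of $V$. Integrability then provides, for each such $w_i$, a bound $n(w_i)$ with $F_{\gamma_i}^{(n)}.w_i = 0$ for $n \geq n(w_i)$; taking the maximum over this finite collection yields a uniform upper bound on the admissible values of $r_i$, which propagates the induction. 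Iterating down to $i=1$ shows that only finitely many PBW monomials act nontrivially on $v$, so $\Uz^-.v$ is finite-dimensional.

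The main obstacle is this downward induction, and the key input that makes it close is that condition (b) of integrability applies to every positive root $\gamma \in \Phi^+$, not merely to the simple roots. The higher root vectors $F_\gamma^{(n)}$, defined via the braid group operators of Section~\ref{subsection:braidgroupauto}, must therefore also eventually annihilate every vector in $V$; without this global hypothesis the induction on the PBW basis would not terminate.
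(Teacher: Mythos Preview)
Your approach is correct and takes a genuinely different route from the paper's. Both proofs share the opening reductions---to cyclic submodules, to weight vectors when $T \subset N$, and ultimately to showing that $\Uz^-.v$ is finite-dimensional---but diverge at that last step.

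You argue directly with the PBW basis \eqref{eq:Fdivpowerbasis}, using that the paper's integrability condition (2) is stated for every $\alpha \in \Phi^+$: since each $F_{\gamma_i}^{(n)}$ eventually annihilates any given vector, a downward induction on $i$ bounds the PBW monomials acting nontrivially on $v$. This is elementary and transparent. The paper instead uses only \emph{simple}-root integrability: it sets $n_\alpha = \max\{n : F_\alpha^{(n)}.v \neq 0\}$ for $\alpha \in \Pi$, forms $\lambda = \sum_{\alpha \in \Pi} n_\alpha \varpi_\alpha$, and realizes $\Uz^-.v$ as a quotient of $M(\lambda)/N(\lambda)$, where $N(\lambda)$ is the $\Uz$-submodule of the Verma module $M(\lambda)$ generated by the $F_\alpha^{(n)} x_\lambda$ with $\alpha \in \Pi$ and $n > n_\alpha$. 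One then argues (following \cite{Andersen:1991}) that $M(\lambda)/N(\lambda)$ is an integrable $\Uz$-module, so the Weyl group permutes its weights, forcing finite-dimensionality. This is more structural and, importantly, would still go through if integrability were assumed only for simple roots---a meaningful distinction, since for a bare $\Uzu$-module there are no braid group operators available to transport local nilpotence from simple to arbitrary root vectors. The paper also explicitly handles $N = G$ by writing $\Uz.v = \Uz^+(\Uz^-.v)$ and applying the unipotent case twice; you omit $G$ from your list, though your method covers it the same way.
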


\begin{proof}
Let $0 \neq v \in V$. To prove the lemma, it suffices to show that $v$ generates a finite-dimensional $\UzN$-submodule of $V$. If $N$ is a finite subgroup scheme of $G$, then $\UzN$ is finite-dimensional, in which case the result is obvious. If $V$ is a $\Uz(NT)$-module, then $v = \sum_{\mu \in X} v_\mu$, a finite sum of weight vectors, and $\Uz(NT).v \subseteq \sum \Uz(N)\Uzo.v_\mu = \sum \UzN.v_\mu$, which is again finite-dimensional.

Suppose $N = U$. For each $\alpha \in \Pi$, set $n_\alpha = \max \{n \in \N: F_\alpha^{(n)}.v \neq 0\}$, and set $\lambda = \sum_{\alpha \in \Pi} n_\alpha \varpi_\alpha$. Then $M(\lambda):= \Uz \otimes_{\Uzbp} \lambda$ is the Verma module of highest weight $\lambda$, generated by the vector $x_\lambda := 1 \otimes \lambda$. As a vector space and as a module for $\Uz^-$, $M(\lambda) \cong \Uz^-$. Now define $J(\lambda)$ to be the left ideal of $\Uz^-$ generated by $\{ F_\alpha^{(n)}: \alpha \in \Pi, n > n_\alpha \}$, and define $N(\lambda)$ to be the $\Uz^-$-submodule $J(\lambda).x_\lambda$ of $M(\lambda)$. Since the action of $\Uz^-$ on $v$ factors through the quotient $\Uz^-/J(\lambda)$, it suffices to show that the quotient is finite-dimensional. As a $\Uz^-$-module, $\Uz^-/J(\lambda) \cong M(\lambda)/N(\lambda)$. Arguing as in the proof of \cite[Proposition 1.20]{Andersen:1991}, we see that $N(\lambda)$ is a $\Uz$-submodule of $M(\lambda)$, and that the quotient $M(\lambda)/N(\lambda)$ is an integrable $\Uz$-module. Then by \cite[Proposition 1.7]{Andersen:1992a}, the Weyl group $W$ acts on the weights of $M(\lambda)/N(\lambda)$. Continuing as in the proof of \cite[Proposition 1.20]{Andersen:1991}, we conclude that $M(\lambda)/N(\lambda)$, and hence also $\Uz^-.v$, is finite-dimensional. This proves the lemma for the case $N=U$, from which the case $N=G_rB$ also follows. (Write $\Uzgrb = \Uzgrt \Uzu$.)

Finally, suppose $N=G$. Again writing $v = \sum v_\mu$, we have $\Uz.v \subseteq \sum \Uz.v_\mu$, so it suffices to show that each $\Uz.v_\mu$ is finite-dimensional, i.e., we may assume that $v$ is a weight vector. Then $\Uz.v = \Uz^+ \Uzo \Uz^- .v = \Uz^+ \Uz^- \Uzo .v = \Uz^+ \Uz^-.v = \Uz^+(\Uz^-.v)$. By the case $N=U$, the space $V':=\Uz^-.v$ is finite-dimensional. Then by the case $N=U^+$ (which is completely analogous to the case $N=U$), the space $\Uz.v = \Uz^+.V'$ is as well.
\end{proof}

\begin{remark}
Conversely, every locally finite $\Uz$-module is integrable \cite[Theorem A3.7]{Du:1994}.
\end{remark}

\subsection{Baby Verma modules}

Let $\ind_{U_1}^{U_2}(-) = \opH^0(U_2/U_1,-)$ be the induction functor for quantized enveloping algebras defined in \cite{Andersen:1992a,Andersen:1992,Andersen:1991}. Given $\lambda \in X$, define the integrable modules
\begin{align*}
\whZr'(\lambda) &= \ind_{\Uzb}^{\Uzgrb} \lambda, \\
\wtZr'(\lambda) &= \ind_{\Uzbrt}^{\Uzgrt} \lambda, \quad \text{and} \\
Z_r'(\lambda) &= \ind_{\Uzbr}^{\Uzgr} \lambda.
\end{align*}
Then $\whZr'(\lambda)|_{\Uzgrt} \cong \wtZr'(\lambda)$ and $\wtZr'(\lambda)|_{\Uzgr} \cong Z_r'(\lambda)$ (cf.\ \cite[\S 1.1--1.3]{Andersen:1992}). Now let $\coind_K^H (-) = H \otimes_K -$ be the usual tensor induction functor for Hopf algebras, and define the integrable modules
\begin{align*}
\whZr(\lambda) &= \coind_{\Uzbp}^{\Uzgrbp} \lambda, \\
\wtZr(\lambda) &= \coind_{\Uzbrpt}^{\Uzgrt} \lambda, \quad \text{and} \\
Z_r(\lambda) &= \coind_{\Uzbrp}^{\Uzgr} \lambda.
\end{align*}
Then $\whZr(\lambda)|_{\Uzgrt} \cong \wtZr(\lambda)$ and $\wtZr(\lambda)|_{\Uzgr} \cong Z_r(\lambda)$. As modules for the Hopf algebra $\Uzo$, $\whZr'(\lambda) \cong \Hom_k(\Uzurp,k) \otimes \lambda$, and $\whZr(\lambda) \cong \Uzur \otimes \lambda$. Here $\Uzur$ and $\Uzurp$ are viewed as $\Uzo$-modules via the adjoint action of $\Uzo$.

\begin{lemma} \textup{\cite[Lemma 3.6]{Drupieski:2011a}} \label{lemma:projectivecoverinjectivehull}
Let $\lambda \in X$.
\begin{enumerate}
\item In the category of integrable $\Uzbrt$-modules, $\wtZr(\lambda)$ is the projective cover of $\lambda$ and the injective hull of $\lambda - 2(p^r\ell-1)\rho$.
\item In the category of integrable $\Uzbrpt$-modules, $\wtZr'(\lambda)$ is the projective cover of $\lambda - 2(p^r\ell-1)\rho$ and the injective hull of $\lambda$.
\end{enumerate}
Statements (1) and (2) are also true if the modules $\wtZr(\lambda)$ and $\wtZr'(\lambda)$ are replaced by $Z_r(\lambda)$ and $Z_r'(\lambda)$, and if the algebras $\Uzbrt$ and $\Uzbrpt$ are replaced by $\Uzbr$ and $\Uzbrp$.
\end{lemma}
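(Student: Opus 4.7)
The plan is to prove the statement for $\wtZr(\lambda)$ in the category of integrable $\Uzbrt$-modules; the version for $\wtZr'(\lambda)$ follows by applying the algebra automorphism $\omega$ from Section \ref{subsection:QEAs} (which swaps positive and negative roots and hence swaps the roles of ``projective cover'' and ``injective hull''), while the statements for $Z_r(\lambda)$ and $Z_r'(\lambda)$ reduce to the $\Uzbrt$ and $\Uzbrpt$ versions by restriction from $\Uzbrt$ to $\Uzbr$. This last reduction is harmless because the ``missing'' part of the torus acts by semisimple characters that refine, but do not destroy, the projective/injective structure.

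First I would re-express $\wtZr(\lambda)$ as a tensor induction from $\Uzo$. Using the triangular decomposition $\Uzgrt = \Uzur \otimes_k \Uzbrpt$ together with $\Uzbrt = \Uzur \otimes_k \Uzo$, one obtains an isomorphism of $\Uzbrt$-modules
\[
\wtZr(\lambda)|_{\Uzbrt} \ \cong \ \Uzbrt \otimes_{\Uzo} \lambda.
\]
The category of integrable $\Uzo$-modules is semisimple (such a module is just an $X$-graded vector space), so Frobenius reciprocity gives $\Hom_{\Uzbrt}(\wtZr(\lambda), -) \cong \Hom_{\Uzo}(\lambda, (-)|_{\Uzo})$, which is exact, showing $\wtZr(\lambda)$ is projective among integrable $\Uzbrt$-modules. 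The augmentation ideal $(\Uzur)_+$ is nilpotent (it is the positively graded part of a connected, graded, finite-dimensional algebra) and lies in the Jacobson radical of $\Uzbrt$, so the unique simple quotient
\[
\wtZr(\lambda)/(\Uzur)_+ \wtZr(\lambda) \ \cong \ \lambda
\]
identifies $\wtZr(\lambda)$ as the projective cover of $\lambda$.

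For the injective-hull claim, the essential input is that $\Uzur$ is a Frobenius algebra with one-dimensional socle spanned by the top divided-power monomial
\[
F_{\mathrm{top}} \ := \ F_{\gamma_1}^{(p^r\ell - 1)} \cdots F_{\gamma_N}^{(p^r\ell - 1)},
\]
whose $\Uzo$-adjoint weight is $-(p^r\ell - 1)\sum_{\alpha \in \Phi^+}\alpha = -2(p^r\ell - 1)\rho$. Granted this, a standard smash-product argument shows that $\Uzbrt$ is Frobenius in the integrable module category, so every projective integrable $\Uzbrt$-module is also injective. In particular $\wtZr(\lambda)$ is injective, and its socle is the one-dimensional weight space spanned by $F_{\mathrm{top}} \otimes 1_\lambda$, of weight $\lambda - 2(p^r\ell-1)\rho$, which identifies it as the claimed injective hull.

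The main obstacle is verifying the Frobenius property of $\Uzur$ and the explicit identification of its socle weight; the rest of the proof is formal. This demands a careful analysis of the multiplication of Lusztig's divided-power PBW basis \eqref{eq:Fdivpowerbasis} for $\Uzur$: concretely, one must show that every non-scalar monomial in this basis annihilates $F_{\mathrm{top}}$ on the left, which is a quantum analogue of the classical computation for $\Dist(U_r)$. A version of this analysis was carried out in the author's earlier work \cite{Drupieski:2011a}, from which the present statement is quoted.
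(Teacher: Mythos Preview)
The paper does not prove this lemma; it is quoted verbatim from \cite[Lemma 3.6]{Drupieski:2011a} with no argument given here. So there is no ``paper's own proof'' to compare against, and your proposal is really a reconstruction of what that external proof must look like.

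That said, your outline is sound and matches the standard strategy for results of this type (compare \cite[II.9]{Jantzen:2003} for Frobenius kernels of algebraic groups). The identification $\wtZr(\lambda)|_{\Uzbrt}\cong \Uzbrt\otimes_{\Uzo}\lambda$ and the use of semisimplicity of integrable $\Uzo$-modules to obtain projectivity is exactly right, as is the observation that the nilpotency of $(\Uzur)_+$ forces the unique simple quotient to be $\lambda$. You have also correctly isolated the one nontrivial ingredient: the Frobenius property of $\Uzur$ (equivalently, the one-dimensionality of its left socle and the identification of the socle weight as $-2(p^r\ell-1)\rho$). This is precisely what is established in \cite{Drupieski:2011a}, so your closing remark is on target.

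One small point of care: your appeal to $\omega$ for passing from (1) to (2) is morally correct but slightly glib. The automorphism $\omega$ is an algebra automorphism, so twisting by it preserves projectives and injectives; what changes is which algebra you are working over ($\Uzbrt$ versus $\Uzbrpt$) and the sign of the weights (since $\omega(K_\alpha)=K_\alpha^{-1}$). One must also check that $\omega$ really carries $\wtZr(\lambda)$ to (a weight-shifted) $\wtZr'(\mu)$, which requires relating coinduction from $\Uzbrpt$ to induction from $\Uzbrt$; this is routine but not quite as automatic as your parenthetical suggests. Alternatively one can just run the same argument directly for $\wtZr'(\lambda)$ using $\Uzurp$ in place of $\Uzur$.
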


Let $H$ be a Hopf algebra with antipode $S$, and let $V$ be a left $H$-module. Recall that the dual space $V^* = \Hom_k(V,k)$ is made into an $H$-module by setting $(h.f)(v) = f(S(h).v)$ for all $f \in V^*$, $h \in H$, and $v \in V$. For the Hopf algebras considered in this paper, if $V$ is finite-dimensional, then $V^{**} = (V^*)^*$ is naturally (though not canonically) isomorphic to $V$ as an $H$-module \cite[5.3(3)]{Jantzen:1996}.

\begin{lemma} \textup{\cite[Lemma 3.7]{Drupieski:2011a}} \label{lemma:Zdual}
Let $\lambda \in X$. Then
\begin{align*}
\whZr'(\lambda)^* &\cong \whZr'(2(p^r\ell-1)\rho - \lambda) \qquad \text{and} \\
\whZr(\lambda)^* &\cong \whZr(2(p^r\ell-1)\rho - \lambda).
\end{align*}
\end{lemma}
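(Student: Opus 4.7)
The plan is to build an explicit $\Uzgrbp$-module homomorphism $\phi : \whZr(\mu) \to \whZr(\lambda)^*$, where $\mu = 2(p^r\ell-1)\rho - \lambda$, and then upgrade $\phi$ to an isomorphism using Lemma~\ref{lemma:projectivecoverinjectivehull}. The argument for $\whZr'(\lambda)$ runs in parallel, with the algebra $\Uzgrb$ and Lemma~\ref{lemma:projectivecoverinjectivehull}(2) in place of $\Uzgrbp$ and Lemma~\ref{lemma:projectivecoverinjectivehull}(1).

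First I would read off the weight structure of $\whZr(\lambda)$ from the triangular decomposition of Section~\ref{subsection:braidgroupauto}: as a $\Uzo$-module, $\whZr(\lambda) \cong \Uzur \otimes k_\lambda$, with weights running from $\lambda$ down to $\lambda - 2(p^r\ell-1)\rho$ and both extremal weight spaces one-dimensional. Dualizing negates weights, so $\whZr(\lambda)^*$ has highest weight $\mu$, realized by the functional $v^*$ dual to the lowest-weight vector of $\whZr(\lambda)$. A short weight computation using $S(E_\alpha) = -K_\alpha^{-1}E_\alpha$ shows that $v^*$ is annihilated by every $E_\alpha$, since $E_\alpha$ cannot send any weight vector into the lowest weight space of $\whZr(\lambda)$. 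The assignment $1 \mapsto v^*$ therefore defines a $\Uzbp$-linear map $k_\mu \to \whZr(\lambda)^*$ which, by Frobenius reciprocity for $\coind_{\Uzbp}^{\Uzgrbp}$, extends uniquely to the desired map $\phi$.

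To see that $\phi$ is an isomorphism, I would restrict both sides to $\Uzbrt$ and apply Lemma~\ref{lemma:projectivecoverinjectivehull}(1). On the domain side, $\whZr(\mu)|_{\Uzbrt} = \wtZr(\mu)$ is the projective cover of $k_\mu$. On the codomain side, the lemma identifies $\wtZr(\lambda)$ as a finite-dimensional indecomposable projective-injective in the category of integrable $\Uzbrt$-modules, with socle $k_{\lambda - 2(p^r\ell-1)\rho}$; dualizing then shows that $\whZr(\lambda)^*|_{\Uzbrt} = \wtZr(\lambda)^*$ is again indecomposable projective-injective, now with one-dimensional head $k_\mu$. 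Uniqueness of projective covers supplies an (a priori unrelated) isomorphism $\wtZr(\mu) \cong \wtZr(\lambda)^*$ in $\Uzbrt$-mod. Because $\phi$ sends the head-generator $1 \otimes 1$ to the nonzero head element $v^*$, its restriction is nonzero on heads and hence surjective by the projective-cover property; the dimension count $\dim \whZr(\mu) = \dim \Uzur = \dim \whZr(\lambda)^*$ then forces $\phi$ to be an isomorphism of $\Uzgrbp$-modules.

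The main obstacle is purely bookkeeping around Hopf-algebra conventions: checking that $v^*$ really is annihilated by the $E_\alpha$'s given the antipode formulas, verifying that duality does send $\wtZr(\lambda)$ to a module with the claimed head $k_\mu$ in the integrable $\Uzbrt$-category, and making sure Frobenius reciprocity is applied with the correct one-sided module structures on $\Uzgrbp$. No new ideas beyond Lemma~\ref{lemma:projectivecoverinjectivehull} and the PBW-style weight analysis of $\whZr(\lambda)$ should be required.
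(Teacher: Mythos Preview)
The paper does not prove this lemma; it is quoted from \cite[Lemma 3.7]{Drupieski:2011a} without argument, so there is no in-text proof to compare against. Your strategy---produce a nonzero $\Uzgrbp$-map $\whZr(\mu)\to\whZr(\lambda)^*$ via Frobenius reciprocity for $\coind_{\Uzbp}^{\Uzgrbp}$, then restrict to $\Uzbrt$ and invoke Lemma~\ref{lemma:projectivecoverinjectivehull} together with a dimension count---is a standard and correct route to results of this shape.

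Two minor points of bookkeeping. First, to apply Frobenius reciprocity you need $v^*$ annihilated by all of $\Uz^+$, not just the $E_\alpha$; the same weight argument (nothing can be lowered into the bottom weight space by any $S(E_\alpha^{(n)})$) handles every divided power, and you should say so. Second, you do not actually need the assertion that $\wtZr(\lambda)^*$ is projective-injective for $\Uzbrt$, which would require a separate justification in this infinite-dimensional setting. All you need is that $\wtZr(\lambda)^*$ has simple head $k_\mu$, which follows immediately by dualizing the simple socle in Lemma~\ref{lemma:projectivecoverinjectivehull}(1); a module with simple head is cyclic on any vector not in its radical, so $\phi$ is surjective because $v^*$ spans the one-dimensional $\mu$-weight space and hence lies outside the radical. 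The phrase ``surjective by the projective-cover property'' is slightly off---it is the simple-head property of the target, not projectivity of the source, that forces surjectivity---but the conclusion is correct once the dimension count is applied.
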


\subsection{Simple modules}

Given $\lambda \in X$, define
\begin{equation} \label{eq:definesimples}
\begin{split}
\whLzr(\lambda) &= \soc_{\Uzgrb} \whZr'(\lambda), \\
\wtLzr(\lambda) &= \soc_{\Uzgrt} \wtZr'(\lambda), \quad \text{and} \\
\Lzr(\lambda) &= \soc_{\Uzgr} Z_r'(\lambda).
\end{split}
\end{equation}

In the context of quantized enveloping algebras, it is not a priori clear that there should be any inclusion relations among the modules in \eqref{eq:definesimples}. We will eventually prove that the modules in \eqref{eq:definesimples} are in fact equal. First we require the following theorem, which follows from arguments completely analogous to those used in the context of algebraic groups (cf.\ \cite[I.9.6]{Jantzen:2003}).

\begin{theorem} \label{theorem:simplemodules}
Let $N \in \set{G_r,G_rT,G_rB}$, and let $\upL(\lambda)$ be the corresponding module in \eqref{eq:definesimples}.
\begin{enumerate}
\item $\upL(\lambda)$ is a simple $\UzN$-module.
\item $\upL(\lambda)^{\Uzurp} \cong \lambda$ as a $\Uztr$-module, and $\End_{\UzN}(\upL(\lambda)) \cong k$.
\item If $T \subset N$, then $\upL(\lambda)^{\Uzurp} = \upL(\lambda)_\lambda$, and each weight $\mu$ of $\upL(\lambda)$ satisfies $\mu \leq \lambda$.
\item For all $\lambda,\mu \in X$, $\upL(\lambda + p^r\ell\mu) \cong \upL(\lambda) \otimes p^r\ell\mu$ as a $\UzN$-module.
\item If $N \in \set{G_rT,G_rB}$, then the $\upL(\lambda)$ for $\lambda \in X$ form a complete set of pairwise non-isomorphic simple integrable $\UzN$-modules. If $N = G_r$, then the $\upL(\lambda)$ with
\[
\lambda \in X_{p^r\ell}:= \set{\mu \in X^+: 0 \leq (\mu,\alpha^\vee) < p^r\ell \; \forall \alpha \in \Pi}
\]
form a complete set of pairwise non-isomorphic simple integrable $\UzN$-modules.
\item There exist isomorphisms of $\UzN$-modules
\begin{gather}
\upL(2(p^r\ell-1)\rho - \lambda)^* \cong \whZr'(\lambda)/\rad_{\UzN} \whZr'(\lambda) \label{eq:sizone} \\
\upL(\lambda) \cong \whZr(\lambda)/\rad_{\UzN} \whZr(\lambda) \label{eq:sixtwo} \\
\upL(2(p^r\ell-1)\rho - \lambda)^* \cong \soc_{\UzN} \whZr(\lambda). \label{eq:sixthree}
\end{gather}
\end{enumerate}
\end{theorem}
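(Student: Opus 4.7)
The plan is to mirror the algebraic-group template of Jantzen \cite[I.9.6]{Jantzen:2003} (cf.\ \cite{Andersen:1992a,Andersen:1992} for $r=0$), with the divided powers $E_\alpha^{(p^i\ell)}$ and $F_\alpha^{(p^i\ell)}$ taking the role of Chevalley generators and integrability replacing rationality. As preparation, I would record from the $\Uzo$-module description $\whZr'(\lambda) \cong \Hom_k(\Uzurp,k)\otimes \lambda$ that $\whZr'(\lambda)$ is finite-dimensional, with all weights in $\lambda - \N\Phi^+$ and a one-dimensional $\lambda$-weight space; the analogous statement applies to $\whZr(\lambda) \cong \Uzur\otimes \lambda$.

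For parts (1)--(3), Lemma \ref{lemma:projectivecoverinjectivehull}(2) identifies $\whZr'(\lambda)|_{\Uzbrpt} = \wtZr'(\lambda)$ as the injective hull of $\lambda$ over $\Uzbrpt$, so its $\Uzbrpt$-socle is exactly the $\lambda$-weight line. Since $\Uzbrpt \subseteq \Uzgrb$, any nonzero $\Uzgrb$-submodule of $\whZr'(\lambda)$ is a nonzero $\Uzbrpt$-submodule and so must contain that line. Consequently the $\Uzgrb$-submodule generated by $\lambda$ is the unique simple submodule, namely $\whLzr(\lambda)$. Its $\Uzurp$-invariants sit inside those of $\whZr'(\lambda)$, which equal the $\lambda$-weight space, so $\whLzr(\lambda)^{\Uzurp} = \whLzr(\lambda)_\lambda \cong \lambda$ as $\Uztr$-modules and $\End_{\Uzgrb}(\whLzr(\lambda)) = k$ by Schur applied to the one-dimensional generator. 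The weight bound $\mu \leq \lambda$ is inherited from $\whZr'(\lambda)$. Restriction from $\Uzgrb$ to $\Uzgrt$ and then to $\Uzgr$ preserves the $\lambda$-line and its role as socle generator, establishing (1)--(3) for $\wtLzr(\lambda)$ and $\Lzr(\lambda)$ as well.

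For part (4), the one-dimensional integrable module $p^r\ell\mu$, pulled back from the character $p^r\mu$ of $B$ via the quantum Frobenius $F_\zeta$, is invertible under tensor product; so $\upL(\lambda)\otimes p^r\ell\mu$ is simple with $\Uzurp$-fixed line of weight $\lambda + p^r\ell\mu$, forcing an isomorphism with $\upL(\lambda + p^r\ell\mu)$ by the uniqueness in (2). For (5), distinctness is immediate from (3). Exhaustion for $N\in\{G_rT,G_rB\}$: using Lemma \ref{lemma:locallyfinite} reduce a simple integrable module $V$ to the finite-dimensional case, pick a weight vector of maximal weight $\lambda$ (automatically $\Uzurp$-fixed by maximality), and adjunction furnishes a surjection $\whZr(\lambda) \twoheadrightarrow V$, giving $V \cong \upL(\lambda)$. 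For $N = G_r$, the twist $p^r\ell\mu$ restricts trivially to $\Uzgr$ (since $\zeta^{p^r\ell}=1$), so simples are parametrized by $X/p^r\ell X$; every simple $\Uzgr$-module extends to a simple $\Uzgrt$-module by using Proposition \ref{proposition:adjointstable} (normality of $\Uzgr$ in $\Uz$) to lift the $\Uztr$-grading to a compatible $\Uzo$-action, and (4) selects the unique representative of the highest weight in $X_{p^r\ell}$.

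For part (6), the same weight analysis shows $\whZr(\lambda)$ is cyclically generated by the $\lambda$-weight line, and adjunction gives a surjection $\whZr(\lambda)\to \upL(\lambda)$ whose kernel is the unique maximal submodule, yielding \eqref{eq:sixtwo}; dualizing \eqref{eq:sixtwo} via Lemma \ref{lemma:Zdual} gives \eqref{eq:sixthree}, and dualizing the defining inclusion $\upL(\lambda) \hookrightarrow \whZr'(\lambda)$ via the same lemma gives \eqref{eq:sizone}. I expect the main obstacle to be carefully verifying integrability of the cyclic submodules and quotients that appear along the way (following the pattern of \cite[Proposition 1.20]{Andersen:1991}), together with the lift of simple $\Uzgr$-modules to $\Uzgrt$-modules in part (5), where one must genuinely use the normality statement of Proposition \ref{proposition:adjointstable} rather than a rational-module splitting. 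Modulo these quantum-specific checks, the proof goes through essentially verbatim with $p^r\ell$ in place of $p^r$.
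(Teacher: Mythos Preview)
Your approach is exactly what the paper does: it gives no proof beyond the sentence ``follows from arguments completely analogous to those used in the context of algebraic groups (cf.\ \cite[I.9.6]{Jantzen:2003}),'' and you have correctly identified that template and sketched how the pieces of Lemma~\ref{lemma:projectivecoverinjectivehull}, Lemma~\ref{lemma:Zdual}, Lemma~\ref{lemma:locallyfinite}, and Proposition~\ref{proposition:adjointstable} slot in for the $p^r\ell$ version. Your caveat about the lift of simple $\Uzgr$-modules to $\Uzgrt$-modules and about not presupposing that the three socles in \eqref{eq:definesimples} coincide is well placed---the paper explicitly defers that identification to Corollary~\ref{corollary:simplescoincide}, so be careful that your phrase ``restriction \ldots\ preserves the $\lambda$-line and its role as socle generator'' is read as re-running the argument over each $\UzN$ separately (via the appropriate part of Lemma~\ref{lemma:projectivecoverinjectivehull}), not as asserting that $\whLzr(\lambda)|_{\Uzgrt}=\wtLzr(\lambda)$.
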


Recall that the simple integrable $\Uz$-modules are parametrized by their highest dominant weights \cite[Proposition 1.6]{Andersen:1992a}. Given $\mu \in X^+$, let $\Lz(\mu)$ be the simple integrable $\Uz$-module of highest weight $\mu$, and let $L(\mu)$ be the simple rational $G$-module of highest weight $\mu$. Now let $\lambda \in X^+$, and write $\lambda = \lambda^0 + \ell \lambda^1$ with $\lambda^0 \in X_\ell$ and $\lambda^1 \in X^+$. Then by \cite[Theorem 1.10]{Andersen:1992a}, there exists a $\Uz$-module isomorphism
\begin{equation} \label{eq:tensorproducttheorem}
\Lz(\lambda) \cong \Lz(\lambda^0) \otimes L(\lambda^1)^{[1]}.
\end{equation}
The restriction of $\Lz(\lambda^0)$ to $\uz$ is simple by \cite[Theorem 1.9]{Andersen:1992a}, while $L(\lambda^1)$ is simple for $G$ (equivalently, for $\Dist(G)$).

\begin{lemma} \label{lemma:zeroonhighestweight}
Let $\lambda \in X_\ell$, and let $0 \neq v \in \Lz(\lambda)_\lambda$. Then $F_\alpha^{(n)}.v = 0$ for all $\alpha \in \Pi$ and $n \geq \ell$.
\end{lemma}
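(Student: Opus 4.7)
The plan is to obtain the vanishing from a Weyl module constructed by $\upA$-base change from a generic highest weight module, exploiting the fact that the desired relations hold automatically in characteristic zero. Set $m_\alpha := (\lambda,\alpha^\vee)$ for each $\alpha \in \Pi$; the hypothesis $\lambda \in X_\ell$ gives $m_\alpha < \ell$, so it suffices to prove the sharper statement that $F_\alpha^{(n)}.v = 0$ for all $n > m_\alpha$.

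Working first over $\Q(q)$, let $V_{\Q(q)}(\lambda)$ denote the simple finite-dimensional integrable $\Uq$-module of highest weight $\lambda$, with highest weight vector $w$. The submodule of $V_{\Q(q)}(\lambda)$ generated by $w$ under the rank-one subalgebra $\Uq(\fraksl_2)_\alpha \subset \Uq$ generated by $E_\alpha, F_\alpha, K_\alpha^{\pm 1}$ is finite-dimensional and integrable of highest weight $m_\alpha$, hence has dimension $m_\alpha + 1$. In particular $F_\alpha^{m_\alpha+1}.w = 0$, and since $[n]_\alpha!$ is nonzero in $\Q(q)$ for every $n$, one obtains $F_\alpha^{(n)}.w = F_\alpha^n.w / [n]_\alpha! = 0$ for all $n > m_\alpha$. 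Setting $V_\upA(\lambda) := \Ua \cdot w \subset V_{\Q(q)}(\lambda)$ gives an $\Ua$-stable $\upA$-lattice in which these vanishing relations persist, and specializing produces the integrable $\Uz$-module $V_\zeta(\lambda) := V_\upA(\lambda) \otimes_\upA k$ whose highest weight vector, still denoted $w$, satisfies $F_\alpha^{(n)}.w = 0$ for $n > m_\alpha$.

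To conclude, $\Lz(\lambda)$ is the unique simple quotient of $V_\zeta(\lambda)$ by standard highest weight theory, and the image of $w$ spans the one-dimensional weight space $\Lz(\lambda)_\lambda$. Hence every $0 \neq v \in \Lz(\lambda)_\lambda$ is a scalar multiple of this image and inherits the vanishing $F_\alpha^{(n)}.v = 0$ for $n > m_\alpha$, which covers in particular all $n \geq \ell$. The argument is a routine descent from characteristic zero via the $\upA$-form; the only mild technical point is to track that the relations survive specialization to the $\ell$-th root of unity $\zeta$, which is automatic because $F_\alpha^{(n)}.w$ is itself an element of $V_\upA(\lambda)$ that already vanishes in the ambient characteristic-zero module.
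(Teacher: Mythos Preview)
Your proof is correct, but it takes a different and in some ways cleaner route than the paper's. The paper first passes to a complex primitive $\ell$-th root of unity $\xi$, invokes Lusztig's results \cite[Proposition~7.1, 3.2(c)]{Lusztig:1989} to obtain $F_\alpha^{(n)}.x = 0$ for $n \geq \ell$ on a highest weight vector of the simple $\Uqxi$-module $L_\xi(\lambda)$, and then descends to $k$ via the factorization $\upA \to \Z[\xi] \to k$. You instead go all the way back to the generic parameter $q$ over $\Q(q)$, where the vanishing $F_\alpha^{(n)}.w = 0$ for $n > (\lambda,\alpha^\vee)$ is immediate from quantum $\fraksl_2$ theory, and then specialize directly through the $\upA$-form $V_\upA(\lambda)$ (the Weyl module construction). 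Your argument is more elementary, avoids the external citations to Lusztig, and yields the sharper bound $n > (\lambda,\alpha^\vee)$ rather than just $n \geq \ell$; the paper's route, on the other hand, sets up the comparison between characteristic zero and characteristic $p$ at the root of unity, which is exactly the framework used again in the character computation later in the same subsection.
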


\begin{proof}
Let $\xi \in \C$ be a primitive $\ell$-th root of unity. Let $L_{\xi}(\lambda)$ be the integrable type 1 simple $\Uqxi$-module of highest weight $\lambda$ (i.e., the simple integrable $U_\xi$-module of highest weight $\lambda$). Fix a highest weight vector $x \in L_\xi(\lambda)_\lambda$. Then $F_\alpha^{(\ell)}.x = 0$ for all $\alpha \in \Pi$ by \cite[Proposition~7.1]{Lusztig:1989}, and consequently $F_\alpha^{(n)}.x = 0$ for all $\alpha \in \Pi$ and $n \geq \ell$ by \cite[3.2(c)]{Lusztig:1989}.

Now let $\phi_\ell$ be the $\ell$-th cyclotomic polynomial. Recall that the map $\upA = \Z[q,q^{-1}] \rightarrow k$ sending $q \mapsto \zeta$ factors through the quotient $\upA/(\phi_\ell) \cong \Z[\xi]$. The ring $\Uzxi = U_\upA \otimes_\upA \Z[\xi]$ is a subring of $\Uqxi$. Let $V'$ be the $\Uzxi$-submodule of $L_\xi(\lambda)$ generated by $x$. Set $V = V' \otimes_{\Z[\xi]} k$. Then $V$ is an integrable $U_k$-module of type 1, i.e., $V$ is an integrable $\Uz$-module. The module $V$ need not be simple for $\Uz$, though it does have $\Lz(\lambda)$ as a simple quotient because $\dim V_\lambda = 1$, and because all other weights $\mu$ of $V$ satisfy $\mu \leq \lambda$. Furthermore, the image of $x$ in $V$ projects onto the highest weight vector of $\Lz(\lambda)$. Since for all $\alpha \in \Pi$ and $n \geq \ell$ the equality $F_\alpha^{(n)}.x = 0$ holds in $L_\xi(\lambda)$, it must then hold in $V'$, and hence also in $V$ and in its simple quotient $\Lz(\lambda)$.
\end{proof}

\begin{theorem} \label{theorem:simplerestrictsimple}
Let $\lambda \in X_{p^r\ell}$. Then $\Lz(\lambda)$ is simple as a module for $\Uzgr$.
\end{theorem}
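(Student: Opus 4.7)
The plan is to use the tensor product theorem \eqref{eq:tensorproducttheorem} to reduce the claim to two simplicity statements---one over $\uz$ and one over $\Dist(G_r)$---and to glue them together by Clifford theory, exploiting the normality of $\uz$ in $\Uzgr$ (Proposition \ref{proposition:adjointstable}) and the identification $\Uzgr // \uz \cong \Dist(G_r)$ induced by $F_\zeta$.

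First I would write $\lambda = \lambda^0 + \ell\lambda^1$ with $\lambda^0 \in X_\ell$ and $\lambda^1 \in X^+$. Since $0 \leq (\lambda^0,\alpha^\vee) < \ell$ and $(\lambda,\alpha^\vee) < p^r\ell$ for every $\alpha \in \Pi$, one reads off that $(\lambda^1,\alpha^\vee) < p^r$, i.e., $\lambda^1 \in X_{p^r}$. Setting $V := \Lz(\lambda^0)$ and $W := L(\lambda^1)$, the tensor product theorem gives $\Lz(\lambda) \cong V \otimes W^{[1]}$. Two inputs drive the argument: (a) $V|_\uz$ is absolutely simple with $\End_\uz(V) = k$, by \cite[Theorem 1.9]{Andersen:1992a}; and (b) $W = L(\lambda^1)$ is simple as a $\Dist(G_r)$-module, by Steinberg's tensor product theorem for algebraic groups (cf.\ \cite[II.3.15, II.3.17]{Jantzen:2003}) applied to the base-$p$ expansion of $\lambda^1 \in X_{p^r}$. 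Since $\uz$ lies in the kernel of $F_\zeta|_{\Uzgr}: \Uzgr \twoheadrightarrow \Dist(G_r)$, input (b) also says that $W^{[1]}$ is a simple $\Uzgr$-module on which $\uz$ acts trivially.

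Now let $M$ be a nonzero $\Uzgr$-submodule of $V \otimes W^{[1]}$. As a $\uz$-module, $V \otimes W^{[1]} \cong V^{\oplus \dim W}$ is $V$-isotypic, so $M$ is too, and evaluation yields a $\uz$-module isomorphism $V \otimes W' \xrightarrow{\sim} M$ with $W' := \Hom_\uz(V, M) \subseteq \Hom_\uz(V, V \otimes W^{[1]}) \cong \End_\uz(V) \otimes W = W$. Because $\uz$ is normal in $\Uzgr$, the space $\Hom_\uz(V, X)$ carries a natural $\Uzgr$-action for any $\Uzgr$-module $X$; this action factors through $\Uzgr // \uz \cong \Dist(G_r)$, and the evaluation map $V \otimes \Hom_\uz(V, X) \to X$ is $\Uzgr$-equivariant. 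Applied to $X = V \otimes W^{[1]}$, this exhibits $W'$ as a nonzero $\Dist(G_r)$-submodule of $W$; by (b), $W' = W$, and hence $M = V \otimes W^{[1]}$, proving simplicity.

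The main obstacle is the Clifford-theoretic bookkeeping in the last paragraph: verifying that the natural $\Uzgr$-action on $\Hom_\uz(V, X)$ descends through $\Uzgr // \uz \cong \Dist(G_r)$ and that the evaluation map is $\Uzgr$-equivariant. This is a formal consequence of the Hopf-algebraic formalism and the normality of $\uz$ in $\Uzgr$, but it is the step that most requires explicit computation with coproducts and the antipode.
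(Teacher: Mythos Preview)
Your argument is correct and uses the same two key inputs as the paper---the simplicity of $\Lz(\lambda^0)$ over $\uz$ and of $L(\lambda^1)$ over $\Dist(G_r)\cong\Uzgr//\uz$---but the route is genuinely different. The paper follows Lusztig's highest-weight argument from \cite[Theorem 7.4]{Lusztig:1989}: it first invokes Lemma~\ref{lemma:zeroonhighestweight} to show that $\Lz(\lambda^0)\otimes L(\lambda^1)^{[1]}$ is generated over $\Uzgr$ by a highest weight vector $x$, and then shows that every $\Uzgr$-submodule contains $x$. Your Clifford-theoretic approach sidesteps Lemma~\ref{lemma:zeroonhighestweight} entirely, trading the explicit generation step for the Hopf-algebraic fact that $\Hom_{\uz}(V,-)=\Hom_k(V,-)^{\uz}$ is a $\Uzgr//\uz$-module (a consequence of normality, as recorded in Section~\ref{subsection:cohomologyofnormalsubalgebras}). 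This buys you a shorter, more structural proof that works uniformly for any normal Hopf subalgebra, at the cost of the coproduct/antipode bookkeeping you flag; the paper's route is more concrete and keeps the highest-weight combinatorics visible, which is what feeds into Corollary~\ref{corollary:simplescoincide}. One small point: your appeal to ``absolute simplicity'' for $\End_{\uz}(V)=k$ is not quite what \cite[Theorem 1.9]{Andersen:1992a} states; rather, $\End_{\uz}(V)=k$ follows from Theorem~\ref{theorem:simplemodules}(2),(5) applied with $r=0$, since $V|_{\uz}$ is simple and hence isomorphic to some $L_{\zeta,0}(\mu)$.
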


\begin{proof}
Write $\lambda = \lambda^0 + \ell \lambda^1$ with $\lambda^0 \in X_\ell$ and $\lambda^1 \in X_{p^r}$. Arguing as in the proof of \cite[Theorem 7.4]{Lusztig:1989}, one uses Lemma \ref{lemma:zeroonhighestweight} to show that $\Lz(\lambda) \cong \Lz(\lambda^0) \otimes L(\lambda^1)^{[1]}$ is generated as a $\Uzgr$-module by a highest weight vector $0 \neq x \in \Lz(\lambda)_\lambda$. Then, continuing as in \cite{Lusztig:1989}, one uses the simplicity of $\Lz(\lambda^0)$ for $\uz$ and the simplicity of $L(\lambda^1)$ for $\Dist(G_r) \cong \Uzgr//\uz$ to show that every $\Uzgr$-submodule of $\Lz(\lambda)$ contains $x$, and hence $\Lz(\lambda)$ is simple for $\Uzgr$.
\end{proof}

\begin{corollary} \label{corollary:simplescoincide}
Let $\lambda \in X$. The three submodules of $\whZr'(\lambda)$ defined in \eqref{eq:definesimples} coincide.
\end{corollary}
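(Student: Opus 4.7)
The approach is to identify each of the three submodules with the cyclic submodule generated by a common highest-weight vector in $\whZr'(\lambda)$, and to reduce the remaining coincidence to Theorem~\ref{theorem:simplerestrictsimple}.

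First I would observe that since $\whZr'(\lambda) \cong \Hom_k(\Uzurp,k) \otimes \lambda$ as $\Uzo$-modules, and the $\Uzurp$-invariants of $\Hom_k(\Uzurp,k)$ are spanned by the counit, the subspace $\whZr'(\lambda)^{\Uzurp}$ is a single line $k v_\lambda$ of weight $\lambda$. Theorem~\ref{theorem:simplemodules}(2) would then ensure that each of $\Lzr(\lambda)$, $\wtLzr(\lambda)$, $\whLzr(\lambda)$ has a one-dimensional $\Uzurp$-fixed subspace of weight $\lambda$, which must coincide with $k v_\lambda$; in particular, $v_\lambda$ lies in all three submodules. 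The simplicity assertion of Theorem~\ref{theorem:simplemodules}(1) would then identify the three submodules with $\Uzgr \cdot v_\lambda$, $\Uzgrt \cdot v_\lambda$, and $\Uzgrb \cdot v_\lambda$, respectively. Since $\Uzgrt = \Uzgr \cdot \Uzo$ and $\Uzo \cdot v_\lambda \subseteq k v_\lambda$, the equality $\wtLzr(\lambda) = \Lzr(\lambda)$ would follow immediately.

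The principal obstacle is then the identity $\Uzgrb \cdot v_\lambda = \Uzgr \cdot v_\lambda$, equivalently that $\Uzgr \cdot v_\lambda$ is already stable under the action of $\Uz^- \subset \Uzgrb$. For $\lambda \in X_{p^r\ell}$ I would invoke Theorem~\ref{theorem:simplerestrictsimple}: the $\Uz$-module $\Lz(\lambda)$ is simple as a $\Uzgr$-module, so the highest-weight projection $\Lz(\lambda) \twoheadrightarrow \lambda$ is a nonzero $\Uzb$-linear map, which Frobenius reciprocity for $\ind$ would lift to a nonzero $\Uzgrb$-linear map $\Lz(\lambda) \to \whZr'(\lambda)$; this map must be injective by the $\Uzgr$-simplicity of $\Lz(\lambda)$, and by construction it carries the highest-weight vector of $\Lz(\lambda)$ to a nonzero scalar multiple of $v_\lambda$. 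The image would then be a $\Uzgr$-simple submodule of $\whZr'(\lambda)$ containing $v_\lambda$, hence equal to $\Uzgr \cdot v_\lambda$, while simultaneously remaining $\Uzgrb$-stable, hence stable under $\Uz^-$. For arbitrary $\lambda \in X$ I would write $\lambda = \lambda_0 + p^r\ell\mu$ with $\lambda_0 \in X_{p^r\ell}$ and propagate the conclusion using the twist isomorphisms $\whZr'(\lambda) \cong \whZr'(\lambda_0) \otimes p^r\ell\mu$ and $\upL(\lambda) \cong \upL(\lambda_0) \otimes p^r\ell\mu$ from Theorem~\ref{theorem:simplemodules}(4).
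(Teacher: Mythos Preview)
Your argument is correct and shares the same skeleton as the paper's: reduce to $\lambda \in X_{p^r\ell}$ via Theorem~\ref{theorem:simplemodules}(4), and then use Theorem~\ref{theorem:simplerestrictsimple} to exhibit $\Lz(\lambda)$ as a $\Uzgr$-simple $\Uzgrb$-submodule of $\whZr'(\lambda)$ containing the unique $\Uzurp$-invariant line. The difference lies in how that last step is executed. The paper first invokes Lemma~\ref{lemma:zeroonhighestweight} together with \eqref{eq:tensorproducttheorem} to compute $\Lz(\lambda)^{\Uzurp}=\Lz(\lambda)_\lambda$, and then appeals to the classification in Theorem~\ref{theorem:simplemodules}(2),(5) to obtain an abstract isomorphism $\Lz(\lambda)\cong\whLzr(\lambda)$. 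You instead produce an explicit $\Uzgrb$-embedding $\Lz(\lambda)\hookrightarrow\whZr'(\lambda)$ via Frobenius reciprocity for $\ind_{\Uzb}^{\Uzgrb}$, which bypasses Lemma~\ref{lemma:zeroonhighestweight} entirely. Your route is slightly more self-contained in that respect; the paper's route has the advantage of simultaneously identifying $\Lz(\lambda)$ with each of the three socles via the same classification statement. Your separate elementary observation that $\wtLzr(\lambda)=\Lzr(\lambda)$ follows already from $\Uzgrt=\Uzgr\cdot\Uzo$ and $v_\lambda$ being a weight vector, independent of Theorem~\ref{theorem:simplerestrictsimple}, is a nice bonus not made explicit in the paper.
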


\begin{proof}
Since any $\lambda \in X$ can be written uniquely as $\lambda = \lambda' + p^r\ell \mu$ with $\lambda' \in X_{p^r\ell}$ and $\mu \in X$, it suffices by Theorem \ref{theorem:simplemodules}(4) to prove that the three submodules of $\whZr'(\lambda)$ coincide in the special case $\lambda \in X_{p^r\ell}$. So assume $\lambda \in X_{p^r\ell}$. By Theorem \ref{theorem:simplerestrictsimple}, $\Lz(\lambda)$ is simple as a $\Uzgr$-module, hence also as a $\Uzgrb$-module and as a $\Uzgrt$-module. Lemma \ref{lemma:zeroonhighestweight} and \eqref{eq:tensorproducttheorem} imply that the set of $\Uzurp$-invariants in $\Lz(\lambda)$ is precisely $\Lz(\lambda)_\lambda$. Then Theorem \ref{theorem:simplemodules}(2) implies $\Lz(\lambda) \cong \whLzr(\lambda)$ as a $\Uzgrb$-module, and hence that $\whLzr(\lambda) = \wtLzr(\lambda) = \Lzr(\lambda)$.
\end{proof}

\begin{remark}
Corollary \ref{corollary:simplescoincide} and Theorem \ref{theorem:simplemodules}(2) imply that every simple integrable $\Uzgrt$-module lifts uniquely to a simple $\Uzgrb$-module, and every simple integrable $\Uzgrb$-module is isomorphic to exactly one simple integrable $\Uzgrt$-module thus extended. By symmetry, the corresponding statement for $\Uzgrbp$ is also true.
\end{remark}

\subsection{Injective modules}

Given $\lambda \in X$, let $\wtQzr(\lambda)$ denote the injective hull of $\wtLzr(\lambda)$ in the category of integrable $\Uzgrt$-modules, and let $\Qzr(\lambda)$ denote the injective hull of $\Lzr(\lambda)$ in category of integrable $\Uzgr$-modules. By \cite[Lemma 3.3]{Drupieski:2011a}, $\wtQzr(\lambda)$ is also injective as a $\Uzgr$-module. Then arguing as in \cite[II.11.3]{Jantzen:2003}, we get that $\wtQzr(\lambda) \cong \Qzr(\lambda)$ as $\Uzgr$-modules. Using the results in \cite[\S 3.1--3.2]{Drupieski:2011a}, the proof of the following theorem now only requires a routine translation to the present context of the argument in \cite[II.11.4]{Jantzen:2003}, where the corresponding result for Frobenius kernels of algebraic groups is proved.

\begin{proposition} \textup{(cf.\ \cite[Proposition II.11.4]{Jantzen:2003})} \label{proposition:humphreysfiltration}
Let $\lambda \in X$. The $\Uzgrt$-module $\wtQzr(\lambda)$ admits filtrations $0 = M_0 \subset M_1 \subset \cdots \subset M_s = \wtQzr(\lambda)$ and $0 = M_0' \subset M_1' \subset \cdots \subset M_s' = \wtQzr(\lambda)$ such that each factor has the form $M_i/M_{i-1} \cong \wtZr(\lambda_i)$ resp.\ $M_i'/M_{i-1}' \cong \wtZr'(\lambda_i')$ with $\lambda,\lambda_i' \in X$. For each $\mu \in X$, the number of $i$ with $\lambda_i = \mu$ resp.\ with $\lambda_i' = \mu$ is equal to $[\wtZr(\mu):\wtLzr(\lambda)] = [\wtZr'(\mu):\wtLzr(\lambda)]$, the composition factor multiplicity of $\wtLzr(\lambda)$ in $\wtZr(\mu)$ resp.\ $\wtZr'(\mu)$.
\end{proposition}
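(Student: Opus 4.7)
The plan is to imitate Jantzen's argument for the Frobenius kernels of algebraic groups, \cite[Proposition II.11.4]{Jantzen:2003}, translating it via the triangular decomposition of Section~\ref{subsection:braidgroupauto}, Lemma~\ref{lemma:projectivecoverinjectivehull}, and the technical machinery of \cite[\S 3.1--3.2]{Drupieski:2011a}.

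First I would show that the restriction $\wtQzr(\lambda)\big|_{\Uzbrpt}$ is an injective integrable $\Uzbrpt$-module. The triangular decomposition implies that $\Uzgrt$ is free as a right $\Uzbrpt$-module (with basis the PBW monomials from \eqref{eq:Fdivpowerbasis} of the appropriate range), so the tensor-induction functor $\Uzgrt \otimes_{\Uzbrpt}(-)$ is exact. Since this functor is left adjoint to restriction, restriction preserves injectives. By Lemma~\ref{lemma:projectivecoverinjectivehull}(2), $\wtZr'(\mu)$ is the injective hull of $\mu$ in integrable $\Uzbrpt$-modules, and every integrable injective $\Uzbrpt$-module with finite-dimensional weight spaces splits as a direct sum of such hulls. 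One therefore obtains a $\Uzbrpt$-decomposition
\begin{equation*}
\wtQzr(\lambda)\big|_{\Uzbrpt} \;\cong\; \bigoplus_{\mu \in X} \wtZr'(\mu)^{\oplus m_\mu},\qquad m_\mu = \dim \wtQzr(\lambda)_\mu^{\Uzurp}.
\end{equation*}

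Second, I would construct the $\wtZr$-filtration by induction on the dominance ordering of highest weights. Choose a maximal weight $\mu_1$ in the support of $\wtQzr(\lambda)$ and a nonzero $\Uzurp$-invariant weight vector $v \in \wtQzr(\lambda)_{\mu_1}$. By the universal property of coinduction, $v$ determines a $\Uzgrt$-homomorphism $\varphi \colon \wtZr(\mu_1) \to \wtQzr(\lambda)$. Using the $\Uzbrpt$-decomposition above and the character identity $\ch \wtZr(\mu_1) = \ch(\Uzur)\, e^{\mu_1} = \ch \wtZr'(\mu_1)$ (which follows from PBW for $\Uzur$ and $\Uzurp$ plus the fact that $\omega$ interchanges them), one checks that $\varphi$ embeds $\wtZr(\mu_1)$ onto the $\wtZr'(\mu_1)$-summand of $\wtQzr(\lambda)\big|_{\Uzbrpt}$ containing $v$ in its socle. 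Setting $M_1 := \im(\varphi)$, the quotient $\wtQzr(\lambda)/M_1$ remains injective as an integrable $\Uzbrpt$-module (with one fewer $\wtZr'(\mu_1)$-summand), so the induction continues; arranging the resulting $\mu_i$ in non-increasing order gives the desired filtration with factors $\wtZr(\lambda_i)$. The dual filtration with factors $\wtZr'(\lambda_i')$ is obtained either by repeating the argument with $B$ and $B^+$ interchanged (using Lemma~\ref{lemma:projectivecoverinjectivehull}(1) in place of (2)), or by applying $(-)^*$ and invoking Lemma~\ref{lemma:Zdual} together with the self-duality of $\wtQzr(\lambda)$ recorded in \cite[\S 3.2]{Drupieski:2011a}.

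Finally, the multiplicity count follows by reciprocity: since $\wtQzr(\lambda)$ is the integrable injective hull of $\wtLzr(\lambda)$, one has $(\wtQzr(\lambda) : \wtZr(\mu)) = \dim \Hom_{\Uzgrt}(\wtZr(\mu),\wtQzr(\lambda)) = [\wtZr(\mu) : \wtLzr(\lambda)]$, using that $\wtZr(\mu)$ has simple head $\wtLzr(\mu)$ (Theorem~\ref{theorem:simplemodules}(6), equation~\eqref{eq:sixtwo}). The coincidence $[\wtZr(\mu):\wtLzr(\lambda)] = [\wtZr'(\mu):\wtLzr(\lambda)]$ is then immediate from the character equality above together with an upper-triangular induction on the weight ordering. \textbf{The main obstacle} is the verification, at each inductive step, that $\varphi$ is injective and that passage to the quotient preserves the $\Uzbrpt$-injective structure; both hinge on the character identity $\ch \wtZr(\mu) = \ch \wtZr'(\mu)$ and on the fact that $v$ is contained in a single $\wtZr'(\mu_1)$-summand of the $\Uzbrpt$-decomposition. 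Once these are in place, the remaining steps transcribe directly from the algebraic group setting.
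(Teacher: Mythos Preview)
Your overall strategy---translate Jantzen's argument from \cite[II.11.4]{Jantzen:2003} using the results of \cite[\S 3.1--3.2]{Drupieski:2011a}---is exactly what the paper does, and the reciprocity count at the end is fine. However, there is a genuine gap at precisely the step you flag as the ``main obstacle,'' and it stems from restricting to the \emph{wrong} Borel.

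For the $\wtZr$-filtration you restrict $\wtQzr(\lambda)$ to $\Uzbrpt$ and decompose into $\wtZr'(\mu)$'s, then try to match $\im\varphi$ with a $\wtZr'(\mu_1)$-summand via the character identity $\ch\wtZr(\mu_1)=\ch\wtZr'(\mu_1)$. But the character identity does not show that $\im\varphi=\Uzur\cdot v$ is $\Uzbrpt$-injective, nor that it coincides with a single $\wtZr'(\mu_1)$-summand: a $\Uzbrpt$-submodule of an injective module with the right character need not itself be injective, and the $\Uzurp$-socle of $\wtZr(\mu_1)$ is typically larger than one-dimensional. Without $\Uzbrpt$-injectivity of $\im\varphi$ you cannot conclude the quotient remains $\Uzbrpt$-injective, so the induction stalls. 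Likewise, the injectivity of $\varphi$ itself is a statement about the $\Uzur$-action on $\wtQzr(\lambda)$, which is not visible from the $\Uzbrpt$-decomposition.

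The fix---and the actual content of Jantzen's argument---is to restrict to $\Uzbrt$ instead. By Lemma~\ref{lemma:projectivecoverinjectivehull}(1) the $\wtZr(\nu)$ themselves are the indecomposable injective $\Uzbrt$-modules, so $\wtQzr(\lambda)|_{\Uzbrt}\cong\bigoplus_j\wtZr(\nu_j)$, and each summand is $\Uzur$-free of rank one on its highest weight vector. Taking $v$ to be the highest weight vector of a summand with $\nu_j=\mu_1$ maximal, the map $\varphi$ is then manifestly injective (its image is exactly that $\Uzbrt$-summand, since $\Uzur\cdot v$ already has the right dimension), and the quotient is the remaining direct sum of $\wtZr(\nu_j)$'s, hence still $\Uzbrt$-injective. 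With this choice your induction runs without any appeal to the character identity, and the $\wtZr'$-filtration follows by the symmetric argument using $\Uzbrpt$ or by duality as you indicate.
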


\begin{corollary}
Let $\lambda \in X$. Then $\wtQzr(\lambda)$ is the projective cover of $\wtLzr(\lambda)$ in the category of integralbe $\Uzgrt$-modules, and $\Qzr(\lambda)$ is the projective cover of $\Lzr(\lambda)$ in the category of integrable $\Uzgr$-modules.
\end{corollary}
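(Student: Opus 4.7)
The plan is to treat the two statements separately. The claim for $\Qzr(\lambda)$ is immediate: $\Uzgr$ is a finite-dimensional Hopf algebra and therefore a Frobenius algebra, in which setting the injective hull and projective cover of any simple module coincide. So $\Qzr(\lambda)$, defined as the injective hull of $\Lzr(\lambda)$, is automatically its projective cover.

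For the claim about $\wtQzr(\lambda)$ I plan to adapt Jantzen II.11.7 to the present quantum setting. The decisive preparatory step is to prove that each baby Verma module $\wtZr(\mu)$ is projective in the category of integrable $\Uzgrt$-modules. The scheme mirrors Jantzen II.11.1--II.11.4: identify $\wtZr(\mu) = \coind_{\Uzbrpt}^{\Uzgrt}\mu$, up to a modular-character twist, with an induced module $\ind_{\Uzbrt}^{\Uzgrt}$ of an injective $\Uzbrt$-module furnished by Lemma \ref{lemma:projectivecoverinjectivehull}, thereby establishing injectivity of $\wtZr(\mu)$ over $\Uzgrt$; then invoke the Frobenius-type structure of the blocks of the integrable $\Uzgrt$-category to promote injectivity to projectivity.

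Given this step, Proposition \ref{proposition:humphreysfiltration} exhibits $\wtQzr(\lambda)$ as an iterated extension of projective modules, so $\wtQzr(\lambda)$ is itself projective. Being the injective hull of a simple module it is also indecomposable, so it is the projective cover of a unique simple $\wtLzr(\mu)$. The trivial Nakayama permutation on the blocks of the integrable $\Uzgrt$-category (the analogue of the algebraic-group facts gathered in Jantzen II.11) implies that the socle of this projective cover is $\wtLzr(\mu)$; comparing with the socle $\wtLzr(\lambda)$ of the injective hull $\wtQzr(\lambda)$ forces $\mu = \lambda$.

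The principal obstacle lies in the projectivity assertion for $\wtZr(\mu)$ over $\Uzgrt$, since Lemma \ref{lemma:projectivecoverinjectivehull} supplies the analogous property only over the Borel $\Uzbrt$. Upgrading to $\Uzgrt$ requires both the modular-character calculation relating coinduction to induction and the block-by-block Frobenius structure of the integrable $\Uzgrt$-category --- precisely the points where Jantzen II.11.1--II.11.4 do their main work. These should transfer routinely to the quantum setting in view of the triangular decomposition and the lemmas already established, but they carry the bulk of the technical content.
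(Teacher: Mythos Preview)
Your approach to $\wtQzr(\lambda)$ is essentially the paper's: the paper cites Proposition~\ref{proposition:humphreysfiltration} for the $\wtZr$-filtration, \cite[Proposition~3.9]{Drupieski:2011a} for the projectivity of each $\wtZr(\mu)$ in the integrable $\Uzgrt$-category (which you propose to re-derive along the lines of Jantzen II.11.1--II.11.4), and then the argument of \cite[II.11.5]{Jantzen:2003} to identify the head. Your final step, invoking a ``trivial Nakayama permutation,'' is circular as phrased---that statement \emph{is} the conclusion you want---but the actual content of the II.11.5 argument (a reciprocity computation showing that the projective cover and the injective hull of $\wtLzr(\lambda)$ have the same $\wtZr$-filtration multiplicities, hence coincide) does transfer to the quantum setting, so the outline is sound.

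There is, however, a genuine gap in your shortcut for $\Qzr(\lambda)$. A finite-dimensional Hopf algebra is indeed Frobenius, hence self-injective, so $\Qzr(\lambda)$ is certainly an indecomposable projective and therefore the projective cover of \emph{some} simple module. But for a Frobenius algebra the injective hull of a simple $L$ is the projective cover of $\nu(L)$, where $\nu$ is the Nakayama permutation, and $\nu=\id$ holds only for symmetric algebras. You have not shown that $\Uzgr$ is symmetric, and this is not automatic. The paper avoids the issue by treating both cases with the same argument: having already recorded $\wtQzr(\lambda)|_{\Uzgr}\cong\Qzr(\lambda)$, the $\Uzgr$-statement follows from the $\Uzgrt$-statement via the same II.11.5 reasoning, with no separate appeal to the Frobenius structure of $\Uzgr$ needed.
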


\begin{proof}
This follows from Proposition \ref{proposition:humphreysfiltration}, from \cite[Proposition 3.9]{Drupieski:2011a}, and from the argument in \cite[II.11.5]{Jantzen:2003}.
\end{proof}

Set $\Stprl = \Lz((p^r\ell-1)\rho)$. We call this module the $r$-th Steinberg module for $\Uz$. When $r=0$, we refer to $\St_\ell$ simply as the Steinberg module. The Weyl group $W$ acts on the weights of integrable $\Uz$-modules by \cite[Proposition 1.7]{Andersen:1992a}, so $w_0((p^r\ell-1)\rho) = -(p^r\ell-1)\rho$ is the lowest weight of $\Stprl$. Then $(\Stprl)^*$ is a simple integrable $\Uz$-module of highest weight $(p^r\ell-1)\rho$, hence is isomorphic to $\Stprl$, i.e., $\Stprl$ is self-dual. Now (\ref{eq:sizone}--\ref{eq:sixthree}) imply that there exist $\Uzgrt$-module isomorphisms
\begin{equation} \label{eq:SteinbergZiso}
\Stprl \cong \whZr'((p^r\ell-1)\rho) \cong \whZr((p^r\ell-1)\rho).
\end{equation}

\begin{corollary} \label{corollary:Steinbergprojective}
The $r$-th Steinberg module $\Stprl = \Lz((p^r\ell-1)\rho)$ is injective and projective as an integrable $\Uzgrt$-module and as a $\Uzgr$-module.
\end{corollary}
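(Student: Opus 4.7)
My plan is to prove that the injective hull $\wtQzr((p^r\ell-1)\rho)$ of $\wtLzr((p^r\ell-1)\rho) \cong \Stprl$ in the category of integrable $\Uzgrt$-modules coincides with $\Stprl$ itself; this immediately yields injectivity of $\Stprl$ as a $\Uzgrt$-module, and projectivity follows from the corollary to Proposition~\ref{proposition:humphreysfiltration}. By that proposition, $\wtQzr((p^r\ell-1)\rho)$ admits a filtration whose successive quotients are baby Verma modules $\wtZr(\mu)$, each appearing with multiplicity $[\wtZr(\mu):\wtLzr((p^r\ell-1)\rho)]$. It therefore suffices to show that this multiplicity equals $1$ when $\mu = (p^r\ell-1)\rho$ and vanishes otherwise.

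The key observation is a dimension count. As a $\Uzo$-module, $\wtZr(\mu) \cong \Uzur \otimes \mu$, so every baby Verma module has dimension $(p^r\ell)^N$ with $N = |\Phi^+|$. Restricting the isomorphism \eqref{eq:SteinbergZiso} to $\Uzgrt$ gives $\Stprl \cong \wtZr((p^r\ell-1)\rho)$, so $\wtLzr((p^r\ell-1)\rho) \cong \Stprl$ also has dimension $(p^r\ell)^N$. Consequently, whenever $\wtLzr((p^r\ell-1)\rho)$ occurs as a composition factor of $\wtZr(\mu)$, a dimension count forces its multiplicity to be exactly $1$ and forces $\wtZr(\mu) \cong \wtLzr((p^r\ell-1)\rho)$; comparing highest weights via Theorem~\ref{theorem:simplemodules}(3) and the fact that $\wtZr(\mu)$ has $\mu$ as its unique highest weight then forces $\mu = (p^r\ell-1)\rho$.

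It follows that the filtration of $\wtQzr((p^r\ell-1)\rho)$ has length one, so $\wtQzr((p^r\ell-1)\rho) \cong \wtZr((p^r\ell-1)\rho) \cong \Stprl$; hence $\Stprl$ is both injective and projective as a $\Uzgrt$-module. For the corresponding $\Uzgr$-statement, the isomorphism $\wtQzr(\lambda) \cong \Qzr(\lambda)$ of $\Uzgr$-modules noted in the paragraph preceding Proposition~\ref{proposition:humphreysfiltration} identifies $\Stprl$ with $\Qzr((p^r\ell-1)\rho)$, and since $\Uzgr$ is a finite-dimensional Hopf algebra, injective $\Uzgr$-modules are automatically projective. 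I do not anticipate any serious obstacle; the one delicate point is matching the dimensions of $\wtLzr((p^r\ell-1)\rho)$ and of every $\wtZr(\mu)$, which is exactly the content of \eqref{eq:SteinbergZiso}.
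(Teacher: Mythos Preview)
Your argument is correct and follows essentially the same route as the paper: the paper's one-line proof (``Apply Proposition~\ref{proposition:humphreysfiltration} and \cite[Lemma 3.3]{Drupieski:2011a}'') implicitly relies on exactly the dimension count you spell out, namely that $\dim\wtLzr((p^r\ell-1)\rho)=\dim\wtZr(\mu)$ forces the $\wtZr$-filtration of $\wtQzr((p^r\ell-1)\rho)$ to have length one. Your passage to the $\Uzgr$-case via $\wtQzr\cong\Qzr$ and the Frobenius property of finite-dimensional Hopf algebras is a harmless variant of the paper's appeal to \cite[Lemma 3.3]{Drupieski:2011a}.
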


\begin{proof}
Apply Proposition \ref{proposition:humphreysfiltration} and \cite[Lemma 3.3]{Drupieski:2011a}.
\end{proof}

\subsection{Characters of simple modules}

Let $\set{e(\mu):\mu \in X}$ be the canonical basis for the group ring $k[X]$ of the weight lattice $X$ (an additive group). Given a finite-dimensional $\Uz$-module (resp.\ $G$-module) $M$, the formal character of $M$ is defined by $\ch M = \sum_{\mu \in X} (\dim M_\mu) e(\mu)$.

Let $\lambda \in X^+$, and write $\lambda = \lambda^0 + \lambda^1$ with $\lambda^0 \in X_\ell$ and $\lambda^1 \in X^+$. To compute the formal character of the simple $\Uz$-module $\Lz(\lambda)$, it suffices by \eqref{eq:tensorproducttheorem} to compute the formal characters of $\Lz(\lambda^0)$ and $L(\lambda^1)^{[1]}$. If $p:=\chr(k) = 0$, then $\ch L(\lambda^1)$ is given by the Weyl character formula, and if $\ell > h$, then $\ch \Lz(\lambda^0)$ can be computed by the Lusztig character formula \cite[II.H.12]{Jantzen:2003}.

If $p > 0$, then much less is certain. By relating quantum groups in characteristic zero to algebraic groups in characteristic $p > 0$, Andersen, Jantzen and Soergel \cite{Andersen:1994} have shown that for each root system $\Phi$, there exists an unknown bound $n(\Phi)$, depending only on $\Phi$, such that if $p > n(\Phi)$, then $\ch L(\lambda^1)$ can also be computed by the Lusztig character formula. Unfortunately, no effective lower bound for $n(\Phi)$ is known, though Fiebig \cite{Fiebig:2008} has recently computed a lower bound that is explicit but very large.

Now to compute the characters of the simple $\Uz$-modules when $\zeta \in k$ and $p = \chr(k) \gg 0$, it remains to compute $\ch \Lz(\lambda)$ for $\lambda \in X_\ell$. Recall the setup for the proof of Lemma \ref{lemma:zeroonhighestweight}: We have $\xi \in \C$ a primitive $\ell$-th root of unity, and $\Uqxi = \Ua \otimes_\upA \Q(\xi)$. The module $L_\xi(\lambda)$ is the integrable type 1 simple $\Uqxi$-module of highest weight $\lambda \in X_\ell$, and $\ch L_\xi(\lambda)$ can be computed by the Lusztig character formula.

\begin{theorem}
Let $\lambda \in X_\ell$. There exists an integer $N(\Phi)$, depending only on the root system $\Phi$, such that if $p := \chr(k) > N(\Phi)$, then $\ch \Lz(\lambda) = \ch L_\xi(\lambda)$. In particular, if $\ell > h$ and $p > N(\Phi)$, then $\ch \Lz(\lambda)$ is given by the Lusztig character formula.
\end{theorem}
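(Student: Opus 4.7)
My plan is to prove the theorem by a reduction-mod-$p$ argument applied to a $\Z[\xi]$-lattice in $L_\xi(\lambda)$, using a contravariant (Shapovalov-type) form to detect simplicity after base change. Retain the notation from the proof of Lemma \ref{lemma:zeroonhighestweight}: fix a highest weight vector $x \in L_\xi(\lambda)_\lambda$ and put $V_\lambda' = \Uzxi \cdot x \subset L_\xi(\lambda)$. Since $\Z[\xi]$ is a Dedekind domain and $L_\xi(\lambda)$ is finite-dimensional over $\Q(\xi)$, the submodule $V_\lambda'$ is a finitely-generated torsion-free (hence projective) $\Z[\xi]$-module with $V_\lambda' \otimes_{\Z[\xi]} \Q(\xi) = L_\xi(\lambda)$. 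The reduction $V_\lambda := V_\lambda' \otimes_{\Z[\xi]} k$ is therefore an integrable type-$1$ $\Uz$-module with $\ch V_\lambda = \ch L_\xi(\lambda)$, generated by the image $\bar{x}$ of $x$ and admitting $\Lz(\lambda)$ as its unique simple quotient (as in the proof of Lemma \ref{lemma:zeroonhighestweight}). It therefore suffices to exhibit a bound $N(\Phi)$ such that $V_\lambda$ itself is simple for every $\lambda \in X_\ell$ whenever $p > N(\Phi)$.

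The key device is the Shapovalov contravariant form. Let $\tau$ be the anti-algebra involution of $\Uqxi$ determined by $\tau(E_\alpha) = F_\alpha$, $\tau(F_\alpha) = E_\alpha$, $\tau(K_\alpha) = K_\alpha$; it stabilizes $\Uzxi$. There is a unique symmetric $\tau$-contravariant bilinear form $(\cdot,\cdot)$ on $L_\xi(\lambda)$ normalized by $(x,x) = 1$, and simplicity of $L_\xi(\lambda)$ over $\Q(\xi)$ forces this form to be non-degenerate. By the standard divided-power Shapovalov calculus for Lusztig's integral form --- equivalently, by constructing the form first on the integral Weyl module $W^{\Z[\xi]}(\lambda)$ and descending through the canonical surjection $W^{\Z[\xi]}(\lambda) \twoheadrightarrow V_\lambda'$ --- its restriction to $V_\lambda' \times V_\lambda'$ takes values in $\Z[\xi]$. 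Let $D_\lambda \in \Z[\xi]$ be the determinant of the Gram matrix of this restriction in any $\Z[\xi]$-basis of $V_\lambda'$; non-degeneracy guarantees $D_\lambda \neq 0$.

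For any prime $p$ that does not divide the absolute norm $\lvert N_{\Q(\xi)/\Q}(D_\lambda) \rvert \in \Z_{>0}$, the image of $D_\lambda$ in $k$ is nonzero, so the induced form on $V_\lambda$ is non-degenerate. Because $V_\lambda$ is a highest-weight module and its form is the descent of the Shapovalov form on the characteristic-$p$ Verma (equivalently, Weyl) module, the standard identification of the radical of such a form with the unique maximal submodule of the ambient highest-weight module shows that non-degeneracy on $V_\lambda$ forces the maximal submodule to vanish; consequently $V_\lambda \cong \Lz(\lambda)$, and $\ch \Lz(\lambda) = \ch L_\xi(\lambda)$. Since $X_\ell$ is a finite set, setting $N(\Phi)$ equal to the largest prime divisor of $\prod_{\lambda \in X_\ell} \lvert N_{\Q(\xi)/\Q}(D_\lambda) \rvert$ yields a uniform bound depending only on $\Phi$, and the final assertion is immediate from the validity of the Lusztig character formula for $\ch L_\xi(\lambda)$ when $\ell > h$ \cite[II.H.12]{Jantzen:1996}.

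The most delicate step I anticipate is establishing the integrality of the Shapovalov form with respect to the Lusztig integral form $\Uzxi$: the divided-power structure at a root of unity makes the construction more subtle than at a generic parameter, and one must take care that the form descends integrally from the Verma (or Weyl) module to $V_\lambda'$ so that $D_\lambda$ genuinely encodes the obstruction to simplicity upon reduction mod $p$. Once this is in place, the identification of the form's radical with the unique maximal submodule of a highest-weight module and the norm-based descent from $\Z[\xi]$ to $k$ are both entirely routine.
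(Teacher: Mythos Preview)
Your approach via a contravariant (Shapovalov--type) form is genuinely different from the paper's. The paper argues globally with the small quantum group rather than module by module: it compares the decomposition of the left regular representation $u_{\Q(\xi)} \cong \bigoplus_i Q_i^{\oplus \dim L_i}$ with the corresponding decomposition of $u_k$, lifts an adapted basis to $u_{\Z[\xi]}[1/N]$ for a suitable $N$, and then uses a Krull--Schmidt count on the number of indecomposable summands to force $\dim L_i' = \dim L_i$ for every $i$ once $p > N$. Your argument is more direct for a single $\lambda$ and uses nothing beyond highest-weight theory; the paper's route, by contrast, handles all simples at once via one structural fact about $u_\zeta$ and sidesteps entirely the integrality of the Shapovalov form on Lusztig's divided-power lattice, which is precisely the point you flag as delicate.

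There is, however, a genuine gap in your proposal: the bound you produce depends on $\ell$ and on $\xi$, not only on $\Phi$. You put $N(\Phi)$ equal to the largest prime divisor of $\prod_{\lambda \in X_\ell}\lvert N_{\Q(\xi)/\Q}(D_\lambda)\rvert$, but the index set $X_\ell$, the ring $\Z[\xi]$, and the determinants $D_\lambda$ all vary with $\ell$; nothing in your argument bounds them uniformly, so at best you obtain an $N(\Phi,\ell)$. (Read literally, the paper's construction has the same feature, so if $\ell$-independence is really intended an additional uniformity argument is needed in either approach.) A smaller technical point: $V_\lambda'$ is projective over the Dedekind domain $\Z[\xi]$ but need not be free, so ``any $\Z[\xi]$-basis'' is not literally available; localize at a prime above $p$, or pass to the determinant line, to make sense of $D_\lambda$ and its reduction.
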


\begin{proof}
By the proof of Lemma \ref{lemma:zeroonhighestweight}, $\dim \Lz(\lambda)_\mu \leq \dim L_\xi(\lambda)_\mu$ for all $\mu \in X$. Then to prove the theorem, it suffices to show that there exists a integer $N(\Phi)$, depending only on the root system $\Phi$ (i.e., not depending on the integer $\ell$ or on the choice of primitive $\ell$-th root of unity $\zeta \in k$) such that if $p > N(\Phi)$, then $\dim \Lz(\lambda) = \dim L_\xi(\lambda)$.

Let $\uzxi$ be the subalgebra of $\Uzxi = \Ua \otimes_\upA \Z[\xi]$ generated by $\set{E_\alpha,F_\alpha,K_\alpha:\alpha \in \Pi}$. Then $\uzxi$ and $\Uzxi$ are naturally subalgebras of $\uqxi$ and $\Uqxi$, respectively. As remarked in the proof of Lemma \ref{lemma:zeroonhighestweight}, the map $\upA \rightarrow k$ sending $q \mapsto \zeta$ factors through a map $\upA/(\phi_\ell) \cong \Z[\xi] \rightarrow k$. Then $\uzxi \otimes_{\Z[\xi]} k \cong u_k$.

The simple modules for $\uqxi$ and for $u_k$ are each parametrized by the same finite set, namely, the cartesian product $X_\ell \times (\Z/2\Z)^n$; cf.\ \cite[\S 1]{Andersen:1991}. Given $\lambda \in X_\ell$ and the identity element $e \in (\Z/2\Z)^n$, the simple module parametrized by $(\lambda,e)$ is just $L_\xi(\lambda)$ (resp.\ $\Lz(\lambda)$). Given $\sigma \in (\Z/2\Z)^n$, there exists a one-dimensional $\uqxi$-module (resp.\ $u_k$-module), also denoted $\sigma$, such that the simple module parametrized by $(\lambda,\sigma)$ is $L_\xi(\lambda) \otimes \sigma$ (resp.\ $\Lz(\lambda) \otimes \sigma$).

Let $L_1,\ldots,L_m$ and $L_1',\ldots,L_m'$ be representatives for the isomorphism classes of distinct simple $\uqxi$-modules (resp.\ $u_k$-modules). Since $\dim \Lz(\lambda) \leq \dim L_\xi(\lambda)$ for all $\lambda \in X_\ell$, we may assume $\dim L_i' \leq \dim L_i$ for all $1 \leq i \leq m$. For $1 \leq i \leq m$, let $Q_i$ be the $\uqxi$-projective hull of $L_i$, and let $Q_i'$ be the $u_k$-projective hull of $L_i'$. Since $\End_{\uqxi}(L_i) \cong \Q(\xi)$ (resp.\ $\End_{u_k}(L_i') \cong k$) by Theorem \ref{theorem:simplemodules}(2), we get by standard results for finite-dimensional algebras that the left regular modules decompose as
\[
\uqxi \cong \bigoplus_{i=1}^m (Q_i)^{\oplus \dim L_i} \qquad \text{and} \qquad u_k \cong \bigoplus_{i=1}^m (Q_i')^{\oplus \dim L_i'}.
\]
Write the above direct sum decomposition for $\uqxi$ as $\uqxi \cong P_1 \oplus \cdots \oplus P_s$, where $P_i \cong Q_1$ for $1 \leq i \leq \dim L_1$, $P_i \cong Q_2$ for $(\dim L_1+1) \leq i \leq \dim L_2$, and so on. Let $S \subset \uqxi$ be an ordered basis for $\uqxi$ such that the first $(\dim P_1)$ vectors in $S$ are a basis for $P_1$, the second $(\dim P_2)$ vectors in $S$ are a basis for $P_2$, and so on. There exists $N \in \N$ such that $S \subset \frac{1}{N} \uzxi$. Set $u' = \uzxi \otimes_{\Z[\xi]} \Z[\xi,1/N]$. Then the first $(\dim P_1)$ vectors in $S$ span a $u'$-submodule of $u'$, the second $(\dim P_2)$ vectors in $S$ span a $u'$-submodule of $u'$, and so on. 

Suppose that $S$ is chosen so as to make $N$ as small as possible. In this case, put $N(\Phi) = N$. Now suppose $p > N$. Then the map $\Z[\xi] \rightarrow k$ extends to a map $\Z[\xi,1/N] \rightarrow k$, and $u_k \cong u' \otimes_{\Z[\xi,1/N]} k$. It follows that the left regular module $u_k$ decomposes as $u_k \cong P_1' \oplus P_2' \oplus \cdots \oplus P_s'$, for some $u_k$-submodules $P_1',P_2',\ldots,P_s'$ of $u_k$ with  $\dim P_i' = \dim P_i$. By the Krull-Schmidt theorem, we must have $s \leq \sum_{i=1}^m \dim L_i'$. But $\sum_{i=1}^m \dim L_i' \leq \sum_{i=1}^m \dim L_i = s$. Then $\dim L_i = \dim L_i'$ for all $1 \leq i \leq m$. In particular, we must have $\dim \Lz(\lambda) = \dim L_\xi(\lambda)$ for all $\lambda \in X_\ell$.
\end{proof}

\section{Hopf algebra actions on cohomology} \label{section:Hopfalgebrasoncohomology}

\subsection{Cohomology of normal subalgebras} \label{subsection:cohomologyofnormalsubalgebras}

Let $A$ be an arbitrary augmented algebra over $k$, and let $B \subseteq A$ be a subalgebra. Generalizing the definition of a normal subalgebra given in Section \ref{subsection:normality}, we say that $B$ is (left) normal in $A$ if $B_+A \subseteq AB_+$. If $B$ is normal in $A$, put $A//B = A/(AB_+)$. Suppose $B$ is normal in $A$. Then the space of invariants $V^B \cong \Hom_B(k,V)$ is an $A$-submodule of $V$, and the map $-^B:V \mapsto V^B$ is an endofunctor on the category of $A$-modules.

The cohomology $\opH^n(B,W)$ of $B$ with coefficients in the $B$-module $W$ is defined by $\opH^n(B,W)=\Ext_B^n(k,W)=R^n(\Hom_B(k,-))(W)$. In this context, $\Hom_B(k,-)$ is considered as a functor from the category of $B$-modules to the category of $k$-vector spaces. The right derived functors of $\Hom_B(k,-)$ are defined in terms of $B$-injective resolutions, whereas the right derived functors of $-^B$ are defined in terms of $A$-injective resolutions. The following lemma gives a sufficient condition for $R^n(-^B)(V)$ and $\opH^n(B,V)$ to be isomorphic as $k$-vector spaces.

\begin{lemma} \label{lemma:injective} \textup{\cite[Lemma I.4.3]{Barnes:1985}}
Every injective $A$-module is injective for $B$ if and only if $A$ is flat as a right $B$-module.
\end{lemma}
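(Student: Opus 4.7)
The plan is to derive both directions from the standard adjunction between restriction and induction, combined with duality.

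For the forward implication, I would exploit the adjunction
\[
\Hom_A(A \otimes_B M, V) \cong \Hom_B(M, V|_B)
\]
which is natural in the $B$-module $M$ and the $A$-module $V$. This says that restriction $V \mapsto V|_B$ is right adjoint to the induction functor $A \otimes_B -$. If $A$ is flat as a right $B$-module, then $A \otimes_B -$ is exact, and since a right adjoint of an exact functor preserves injectives, every injective $A$-module restricts to an injective $B$-module. This is the easy half.

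For the converse, the key idea is to find a single, well-chosen injective $A$-module whose $B$-injectivity already forces the flatness of $A$ over $B$. The natural candidate is the $k$-linear dual $A^* := \Hom_k(A,k)$, regarded as a left $A$-module by $(a \cdot f)(x) = f(xa)$. Since $A$ is free (hence projective) as a right $A$-module, the functor $\Hom_k(-, k)$ carries it to an injective left $A$-module; thus $A^*$ is $A$-injective. By hypothesis, $A^*$ is then injective as a $B$-module.

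Now I would use the standard tensor-hom adjunction over $B$: for any left $B$-module $N$,
\[
\Hom_B(N, A^*) \cong \Hom_B(N, \Hom_k(A, k)) \cong \Hom_k(A \otimes_B N, k).
\]
Both sides are functors in $N$; the left-hand side is exact because $A^*$ is $B$-injective, and $\Hom_k(-, k)$ is exact and faithful since $k$ is a field. Therefore $A \otimes_B -$ is exact, i.e., $A$ is flat as a right $B$-module.

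The only subtle point is pinning down the $A$-injectivity of $A^*$ (making sure the left/right module conventions line up correctly so that $A^*$ is a left $A$-module and is injective on that side), after which the argument is purely formal. Both halves are short once the adjunctions are set up, so this really is a lemma-level fact rather than one requiring substantive new ideas.
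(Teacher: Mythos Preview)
The paper does not supply its own proof of this lemma; it simply cites Barnes \cite[Lemma I.4.3]{Barnes:1985}. Your argument is correct and is the standard one. The forward implication is exactly the usual observation that restriction is right adjoint to $A\otimes_B -$ and that right adjoints of exact functors preserve injectives. For the converse, your choice of the coinduced module $A^*=\Hom_k(A,k)$ is the natural one: it is injective as a left $A$-module because $\Hom_k(A,-)$ is right adjoint to the (exact) forgetful functor to $k$-vector spaces, and the tensor--hom adjunction $\Hom_B(N,A^*)\cong\Hom_k(A\otimes_B N,k)$ then transfers $B$-injectivity of $A^*$ into exactness of $A\otimes_B -$, using that over a field the dual-space functor is exact and reflects injections. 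The only care needed, as you note, is tracking the bimodule conventions so that the $B$-action on $A^*$ obtained by restricting the left $A$-action agrees with the one appearing in the adjunction; it does.
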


As a consequence, one gets:

\begin{lemma} \textup{\cite[I.5]{Barnes:1985}} \label{lemma:uniqueaction}
Suppose that $A$ is right $B$-flat. Then for each $A$-module $V$, there exists a unique natural extension of the action of $A$ on $V^B$ to an action of $A$ on $\opH^\bullet(B,V)$. This action of $A$ on $\opH^\bullet(B,V)$ factors through the quotient $A//B$.
\end{lemma}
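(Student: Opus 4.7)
The plan is to compute $\opH^\bullet(B,V)$ using an $A$-injective resolution of $V$ and read off the $A$-action from the functoriality of $-^B$ on the category of $A$-modules.

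First I would fix an injective resolution $V \to I^\bullet$ in the category of $A$-modules. By the hypothesis that $A$ is right $B$-flat and by Lemma \ref{lemma:injective}, each $I^n$ is also injective as a $B$-module, so $I^\bullet$ is an admissible resolution for computing $\opH^\bullet(B,V)$. Thus $\opH^n(B,V) \cong H^n((I^\bullet)^B)$ as $k$-vector spaces. Now the normality hypothesis $B_+A \subseteq AB_+$ tells us that for any $A$-module $M$, the subspace $M^B$ is stable under the $A$-action: if $b \in B_+$, $a \in A$, and $m \in M^B$, write $ba = \sum a_i b_i$ with $b_i \in B_+$; then $b(am) = \sum a_i(b_i m) = 0$. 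Hence each $(I^n)^B$ is an $A$-submodule of $I^n$, and the differentials of $(I^\bullet)^B$ are $A$-linear because the differentials of $I^\bullet$ are. Taking cohomology of this complex of $A$-modules equips $\opH^\bullet(B,V)$ with an $A$-action extending the canonical $A$-action on $V^B = H^0((I^\bullet)^B)$. Moreover, every element of $(I^n)^B$ is annihilated by $B_+$, so the $A$-action on each term, and hence on the cohomology, factors through $A/(AB_+) = A//B$.

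Next I would address naturality and uniqueness. Given a morphism $V \to V'$ of $A$-modules and $A$-injective resolutions $V \to I^\bullet$ and $V' \to J^\bullet$, the comparison theorem produces a chain map $I^\bullet \to J^\bullet$ of $A$-modules, unique up to $A$-linear homotopy; applying $-^B$ gives a chain map of $A//B$-modules inducing the functorial map on cohomology. In particular, any two $A$-injective resolutions of $V$ yield canonically isomorphic $A//B$-module structures on $\opH^\bullet(B,V)$, so the construction does not depend on the choice of resolution. For uniqueness of the extension: suppose we have any natural (in $V$) extension of the $A$-action on $H^0$ to an action on $\opH^\bullet(B,-)$. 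Pick an embedding $V \hookrightarrow I$ into an $A$-injective with cokernel $Q$; the associated long exact sequence in $\opH^\bullet(B,-)$ is $A$-equivariant by naturality, and the connecting map $\opH^n(B,Q) \to \opH^{n+1}(B,V)$ is an $A$-module map and is surjective in positive degrees because $I$ is acyclic for $\opH^\bullet(B,-)$. This determines the $A$-action in degree $n+1$ from the $A$-action in degree $n$ by dimension-shifting, so the extension is unique.

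The only step requiring care is checking that $(I^\bullet)^B$ really is a complex of $A$-modules, which rests on the normality condition $B_+A \subseteq AB_+$ — but this is precisely the hypothesis one gets from $B$ being normal in the sense defined at the start of Section \ref{subsection:cohomologyofnormalsubalgebras}, so there is no genuine obstacle. The rest is bookkeeping with standard homological algebra (comparison of injective resolutions and dimension-shifting).
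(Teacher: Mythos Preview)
Your proposal is correct. The paper does not supply its own proof of this lemma; it simply cites \cite[I.5]{Barnes:1985}. Your argument is the standard one underlying that reference: use the flatness hypothesis together with Lemma~\ref{lemma:injective} to compute $\opH^\bullet(B,-)$ via an $A$-injective resolution, observe that normality makes $(-)^B$ an endofunctor on $A$-modules so that the resulting complex carries an $A$-action factoring through $A//B$, and establish uniqueness by dimension-shifting along short exact sequences with $B$-acyclic middle term. Nothing is missing.
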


\subsection{Compatible actions for Hopf algebras}

Our next goal is to investigate the actions of Hopf algebras on the cohomology groups $\opH^\bullet(B,V)$. When $A$ is itself a Hopf algebra, this will give a new description for the action of $A$ on $\opH^\bullet(B,V)$. First recall the notion of an $H$-module algebra.

\begin{definition} \label{definition:Hmodulealgebra}
Let $H$ be a Hopf algebra. An algebra $A$ is an \emph{$H$-module algebra} if
\begin{enumerate}
	\item $A$ is an $H$-module,
	\item Multiplication $A \otimes A \rightarrow A$ is an $H$-module homomorphism, and
	\item $H$ acts trivially on $1_A \in A$.
\end{enumerate}
Additionally, if $A$ is augmented over $k$, with augmentation map $\varepsilon: A \rightarrow k$, we assume for all $a \in A$ and $h \in H$ that $\varepsilon(a \cdot h) = \varepsilon(a) \varepsilon(h)$.
\end{definition}

Any Hopf algebra is an $H$-module algebra over itself via the left and right adjoint actions \cite[Example 4.1.9]{Montgomery:1993}.

\begin{definition} \label{definition:compatiblestructure}
Let $H$ be a Hopf algebra, $A$ a right $H$-module algebra, and $V$ a left $A$-module that is simultaneously a left $H$-module. Given $h \in H$, write $\Delta(h) = \sum h_{(1)} \otimes h_{(2)}$. Then we say that the $A$- and $H$-module structures on $V$ are \emph{compatible} if for all $v \in V$, $a \in A$ and $h \in H$, we have $a.(h.v) = \sum h_{(1)}.((a \cdot h_{(2)}).v)$.
\end{definition}

\begin{example} \label{example:trivialcompatible}
The $A$- and $H$-module structures on the trivial module $k$ are compatible.
\end{example}

\begin{lemma} \label{lemma:stabilizesinvariants}
Let $A$ be a right $H$-module algebra, and let $V$ be a left $A$-module with compatible left $H$-module structure. Then $V^A$ is an $H$-submodule of $V$.
\end{lemma}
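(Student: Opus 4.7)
The plan is a direct unpacking of the compatibility condition in Definition \ref{definition:compatiblestructure} combined with the augmentation property from Definition \ref{definition:Hmodulealgebra}. The goal is to show that if $v \in V^A$, then $h.v \in V^A$ for every $h \in H$, i.e.\ that $a.(h.v) = \varepsilon(a)(h.v)$ for all $a \in A$.

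First I would fix $v \in V^A$ and $h \in H$, write $\Delta(h) = \sum h_{(1)} \otimes h_{(2)}$, and apply the compatibility formula to rewrite
\[
a.(h.v) = \sum h_{(1)}.\bigl((a \cdot h_{(2)}).v\bigr).
\]
Next, I would use that $v$ is $A$-invariant to replace $(a \cdot h_{(2)}).v$ by $\varepsilon(a \cdot h_{(2)})\, v$, and then invoke the augmentation compatibility clause of Definition \ref{definition:Hmodulealgebra} to split this as $\varepsilon(a)\varepsilon(h_{(2)})\, v$. After pulling the scalar $\varepsilon(a)$ outside the sum, the expression becomes
\[
\varepsilon(a) \sum \varepsilon(h_{(2)})\, h_{(1)}.v.
\]

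The final step is the counit axiom $\sum \varepsilon(h_{(2)})\, h_{(1)} = h$ in $H$, which collapses the sum to $h.v$ and yields $a.(h.v) = \varepsilon(a)(h.v)$, as desired. There is no genuine obstacle here; the only subtle point to flag is that the augmentation clause of Definition \ref{definition:Hmodulealgebra} is essential — without it, one cannot separate $\varepsilon(a \cdot h_{(2)})$ into a product involving $\varepsilon(a)$, and the argument would break down. This also explains why that clause is included in the definition in the first place, and foreshadows why Example \ref{example:trivialcompatible} (the trivial module) is indeed compatible, since the calculation there reduces to exactly the same counit identity.
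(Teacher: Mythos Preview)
Your proof is correct and follows essentially the same argument as the paper's own proof, just with the intermediate steps (the use of $A$-invariance, the augmentation identity $\varepsilon(a\cdot h_{(2)})=\varepsilon(a)\varepsilon(h_{(2)})$, and the counit axiom) spelled out explicitly rather than condensed into a single chain of equalities.
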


\begin{proof}
Let $v \in V^A$, and let $h \in H$. Then for all $a \in A$,
\begin{align*}
a.(h.v) &= \sum h_{(1)}. \left( (a \cdot h_{(2)}).v \right) = \sum \varepsilon(a)\varepsilon(h_{(2)})h_{(1)}.v = \varepsilon(a) h.v. \qedhere
\end{align*}
\end{proof}

\subsection{Actions on the bar resolution} \label{subsection:actionsonbarresolution}

Let $A$ be an augmented algebra over $k$, and let $B$ be a normal subalgebra of $A$. The left bar resolution $\Bbul(B) = B \otimes B_+^{\otimes \bullet}$ of $B$ is the chain complex with differential $d_n: \upB_n(B) \rightarrow \upB_{n-1}(B)$ defined by $d_n = \sum_{i=0}^{n-1} (-1)^i (1^{\otimes i} \otimes m \otimes 1^{\otimes n-i-1})$, where $m: B \otimes B \rightarrow B$ is the multiplication in $B$. Given a $B$-module $W$, set $\Cbul(B,W) = \Hom_B(\Bbul(B),W)$. Then $\opH^n(B,W)$ is the cohomology of the cochain complex $\Cbul(B,W)$.

Suppose $A$ is a right $H$-module algebra, and that $B$ is an $H$-submodule of $A$. Then the right action of $H$ on $B$ extends diagonally to an action of $H$ on $\Bbul(B)$, making $\Bbul(B)$ a complex of right $H$-modules. Now let $M$ be a right $H$-module and let $N$ be a left $H$-module. Then $\Hom_k(M,N)$ is made a left $H$-module by setting (for all $h \in H$, $f \in \Hom_k(M,N)$, and $m \in M$)
\begin{equation} \label{eq:Hdiagonalaction}
(h.f)(m) = \sum h_{(1)}.f(m \cdot h_{(2)}).
\end{equation}

\begin{theorem} \label{theorem:Hadjointaction}
Let $A$ be an augmented algebra over $k$, and let $B$ be a normal subalgebra of $A$. Assume that $A$ is a right $H$-module algebra, and that $B$ is an $H$-submodule of $A$. Let $V$ be a left $A$-module with compatible $H$-module structure. Then \eqref{eq:Hdiagonalaction} defines a left $H$-module structure on $\Cbul(B,V)$ such that $\Cbul(B,V)$ is a complex of $H$-modules. The left action of $H$ on $\Cbul(B,V)$ induces a left action of $H$ on $\opH^\bullet(B,V)$.
\end{theorem}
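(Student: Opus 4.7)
The plan is to establish the three assertions in order: (i) formula \eqref{eq:Hdiagonalaction} gives a well-defined left $H$-action on $\Cbul(B,V)$, (ii) the coboundary maps are $H$-equivariant, and (iii) $H$ therefore descends to an action on $\opH^\bullet(B,V)$. Before any of this I would record two ambient facts. First, because $A$ is a right $H$-module algebra and $B \subseteq A$ is an $H$-submodule, the augmentation condition $\varepsilon(b \cdot h) = \varepsilon(b)\varepsilon(h)$ in Definition \ref{definition:Hmodulealgebra} forces $B_+$ to be $H$-stable, so $H$ acts diagonally (via the iterated coproduct) on each $\upB_n(B) = B \otimes B_+^{\otimes n}$. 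Second, the $H$-module algebra axiom rewritten on elements reads $(b_1 b_2) \cdot h = \sum (b_1 \cdot h_{(1)})(b_2 \cdot h_{(2)})$, which in particular shows that multiplication is $H$-equivariant for the diagonal action, and yields the $(B, H)$-bimodule identity $(b \cdot x) \cdot h = \sum (b \cdot h_{(1)}) \cdot (x \cdot h_{(2)})$ for $x \in \Bbul(B)$, since $B$ acts on the leftmost tensor factor.

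Using the first fact together with $H$-equivariance of multiplication, a quick Sweedler-notation check on $d_n = \sum_{i=0}^{n-1}(-1)^i(1^{\otimes i} \otimes m \otimes 1^{\otimes n-i-1})$ shows that $\Bbul(B)$ is a complex of right $H$-modules. Next I would verify that formula \eqref{eq:Hdiagonalaction} defines a left $H$-action on $\Hom_k(\Bbul(B), V)$; this is a standard coassociativity check that does not require the antipode.

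The main obstacle, where the compatibility hypothesis on $V$ is essential, is to show that this action preserves the subspace $\Hom_B(\Bbul(B), V) = \Cbul(B, V)$. Given $f \in \Cbul(B,V)$, $b \in B$, $x \in \Bbul(B)$, and $h \in H$, one computes
\begin{align*}
(h.f)(b \cdot x)
&= \sum h_{(1)}.f\bigl((b \cdot h_{(2)}) \cdot (x \cdot h_{(3)})\bigr) \\
&= \sum h_{(1)}.\bigl((b \cdot h_{(2)}).f(x \cdot h_{(3)})\bigr) \\
&= b.\sum h_{(1)}.f(x \cdot h_{(2)}) = b.(h.f)(x),
\end{align*}
where the first equality invokes the $(B, H)$-bimodule identity above, the second uses $B$-linearity of $f$, and the third is exactly the compatibility condition of Definition \ref{definition:compatiblestructure} applied with $a = b$ and $v = f(x \cdot h_{(2)})$ (after relabeling via coassociativity). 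Thus $h.f \in \Cbul(B,V)$.

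Assertion (ii) then follows formally: the $H$-action on $\Cbul(B,V)$ is induced by Hom-ing out of the $H$-equivariant complex $\Bbul(B)$ into the $H$-module $V$, so the coboundary maps of $\Cbul(B,V)$ are $H$-linear. Consequently $H$ acts naturally on $\opH^\bullet(B,V) = H^\bullet(\Cbul(B,V))$, giving (iii). The only genuine content is the displayed computation; everything else is bookkeeping with the coproduct.
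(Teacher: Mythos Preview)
Your proof is correct and follows exactly the approach the paper indicates: the paper's own proof is a single sentence asserting that the action stabilizes $\Hom_B(\Bbul(B),V)$ and commutes with the differential ``because the $A$- and $H$-module structures on $V$ are compatible,'' and you have supplied precisely the Sweedler-notation verification that makes this precise. Your identification that compatibility is needed only for preservation of the $B$-linear subspace, while commutation with the differential follows formally from $\Bbul(B)$ being a complex of right $H$-modules, is accurate and slightly sharper than what the paper's one-line proof suggests.
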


\begin{proof}
The left action of $H$ on $\Hom_k(\Bbul(B),V)$ stabilizes the subspace $\Cbul(B,V)$ of $B$-module homomorphisms and commutes with the differential of $\Cbul(B,V)$, because the $A$- and $H$-module structures on $V$ are compatible.
\end{proof}

\begin{definition} \label{definition:adjointaction}
We call the left action of $H$ on $\opH^\bullet(B,V)$ defined in Theorem \ref{theorem:Hadjointaction} the adjoint action of $H$ on $\opH^\bullet(B,V)$.
\end{definition}

The cup product $\cup$ defines a ring structure on $\opH^\bullet(B,k)$. Given cocycles $f \in C^n(B,k)$ and $g \in C^m(B,k)$, write $[f] \in \opH^n(B,k)$ and $[g] \in \opH^m(B,k)$ for the corresponding cohomology classes. Then the cup product $[f] \cup [g] \in \opH^{n+m}(B,k)$ is defined by $[f] \cup [g] = [f \cup g]$, where $f \cup g \in C^{n+m}(B,k)$ is defined by $(f \cup g)([b_1|\ldots|b_{n+m}]) = f([b_1|\cdots|b_n]) g([b_{n+1}|\cdots|b_{n+m}])$.

\begin{lemma} \label{lemma:cohomologyHmodulealgebra}
Let $A,B,H$ be as in Theorem \ref{theorem:Hadjointaction}. The adjoint action of $H$ on the cohomology ring $\opH^\bullet(B,k)$ makes $\opH^\bullet(B,k)$ a left $H$-module algebra.
\end{lemma}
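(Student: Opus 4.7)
The plan is to verify the three defining conditions of Definition \ref{definition:Hmodulealgebra} by checking them at the cochain level on $\Cbul(B,k)$ and then passing to cohomology. By Theorem \ref{theorem:Hadjointaction}, $\Cbul(B,k)$ is already a complex of left $H$-modules with $H$-action commuting with the differential, and the induced $H$-action on $\opH^\bullet(B,k)$ is the adjoint action; this gives condition (1) of Definition \ref{definition:Hmodulealgebra}. What remains is to check that the cup product is $H$-linear in the appropriate diagonal sense, and that $H$ acts on the multiplicative identity through the counit.

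Because $k$ is the trivial $H$-module, the action formula \eqref{eq:Hdiagonalaction} simplifies for $f \in C^n(B,k)$ to $(h.f)([b_1|\cdots|b_n]) = f([b_1 \cdot h_{(1)} | \cdots | b_n \cdot h_{(n)}])$, where $\sum h_{(1)} \otimes \cdots \otimes h_{(n)}$ denotes the iterated coproduct $\Delta^{(n-1)}(h)$. First I would expand $h.(f \cup g)$ for cochains $f \in C^n(B,k)$ and $g \in C^m(B,k)$, using the cup product formula from Section \ref{subsection:actionsonbarresolution}, to obtain
\[
\sum f([b_1 \cdot h_{(1)}|\cdots|b_n \cdot h_{(n)}]) \cdot g([b_{n+1} \cdot h_{(n+1)}|\cdots|b_{n+m} \cdot h_{(n+m)}]),
\]
where now $\Delta^{(n+m-1)}(h) = \sum h_{(1)} \otimes \cdots \otimes h_{(n+m)}$. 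Next I would expand $\sum (h_{(1)}.f) \cup (h_{(2)}.g)$ evaluated on $[b_1|\cdots|b_{n+m}]$, which yields
\[
\sum f([b_1|\cdots|b_n] \cdot h_{(1)}) \cdot g([b_{n+1}|\cdots|b_{n+m}] \cdot h_{(2)}).
\]
Coassociativity of $\Delta$ identifies the two expressions: $\Delta^{(n+m-1)}(h)$ can be computed by first splitting $h$ as $\sum h_{(1)} \otimes h_{(2)}$ via $\Delta$, then applying $\Delta^{(n-1)}$ to $h_{(1)}$ and $\Delta^{(m-1)}$ to $h_{(2)}$. This shows $h.(f \cup g) = \sum (h_{(1)}.f) \cup (h_{(2)}.g)$ at the cochain level, and passing to cohomology establishes Definition \ref{definition:Hmodulealgebra}(2).

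For Definition \ref{definition:Hmodulealgebra}(3), the multiplicative identity of $\opH^\bullet(B,k)$ is represented by $1 \in k = C^0(B,k)$; the action formula makes $h.1 = \varepsilon(h) \cdot 1$, which is the required triviality modulo the counit. The supplementary augmentation compatibility in Definition \ref{definition:Hmodulealgebra} reduces in the same way to the behavior of $\opH^\bullet(B,k)$ in degree zero, where it is immediate. The only substantive calculation is the coassociativity bookkeeping in the cup product step, and I do not anticipate any serious obstacle beyond careful handling of Sweedler notation.
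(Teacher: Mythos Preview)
Your proof is correct and follows exactly the approach of the paper: the paper's one-line proof simply asserts that the cup product $\cup$ on $\Cbul(B,k)$ is a homomorphism of $H$-modules, and your argument is a careful unpacking of precisely that statement via coassociativity. The additional verifications of conditions (1) and (3) of Definition~\ref{definition:Hmodulealgebra} are routine and implicit in the paper's formulation.
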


\begin{proof}
The product $\cup$ on $\Cbul(B,k)$ is a homomorphism of $H$-modules.
\end{proof}

\subsection{Hopf algebra actions via injective resolutions} \label{subsection:actionsviainjectives}

Let $H$ be a Hopf algebra, $A$ a right $H$-module algebra, $B$ a normal subalgebra of $A$ stable under the action of $H$, and $V$ a left $A$-module with compatible $H$-action. So far we have described the adjoint action of $H$ on $\opH^\bullet(B,V)$ in terms of a $B$-projective resolution of $k$. Now we give conditions under which the $H$-module structure on $\opH^\bullet(B,V)$ may also be described in terms of an $A$-injective resolution of $V$.

Recall the bimodule bar resolution $\Bbul(A,A) = A \otimes A_+^{\otimes \bullet} \otimes A$, with differential $d_n: \upB_n(A,A) \rightarrow \upB_{n-1}(A,A)$ defined by $d_n = \sum_{i=0}^n (-1)^i (1^{\otimes i} \otimes m \otimes 1^{\otimes n-i})$, where $m: A \otimes A \rightarrow A$ is the multiplication in $A$. Form the complex $\Qbul(V) = \Hom_A(\Bbul(A,A),V)$, where $\Hom_A(-,V)$ is taken with respect to the left $A$-module structure of $\Bbul(A,A)$. The right $A$-module structure of $\upB_n(A,A)$ induces the structure of a left $A$-module on $Q_n(V)$. Then $\Qbul(V)$ is an $A$-injective resolution of $V$, called the coinduced resolution of $V$ \cite[VI.2]{Barnes:1985}.

As for the bar resolution, $\Bbul(A,A)$ is a complex of right $H$-modules. Define a left $H$-module structure on $\Hom_k(\upB_n(A,A),V)$ by \eqref{eq:Hdiagonalaction}. Since the $A$ and $H$-module structures on $V$ are compatible, this definition makes $\Qbul(V)$ a complex of left $H$-modules. The $A$- and $H$-modules structures on $\Qbul(V)$ are compatible in the sense of Definition \ref{definition:compatiblestructure} because the $A$- and $H$-module structures on $V$ are compatible.

Now suppose that $A$ is right $B$-flat. Then by Lemma \ref{lemma:injective}, the cohomology group $\opH^n(B,V)$ may be computed as either $\opH^n(\Hom_B(\Bbul(B),V))$ or as $\opH^n(\Hom_B(k,\Qbul(V)))$. From Lemma \ref{lemma:stabilizesinvariants} and Theorem \ref{theorem:Hadjointaction}, we conclude the existence of two possibly inequivalent $H$-module structures on $\opH^n(B,V)$, namely, the adjoint action of $H$ on $\opH^n(B,V)$, and the $H$-module structure induced by the $H$-module structure of $\Qbul(V)$. In fact, these two $H$-module structures are equivalent.

\begin{proposition} \label{proposition:equivalentaction}
The two left $H$-module structures on $\opH^\bullet(B,V)$ defined above are equivalent via a natural isomorphism $\opH^n(\Hom_B(\Bbul(B),V)) \stackrel{\sim}{\rightarrow} \opH^n(\Hom_B(k,\Qbul(V)))$.
\end{proposition}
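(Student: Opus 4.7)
The plan is to bridge the two cochain complexes computing $\opH^\bullet(B,V)$ via a single auxiliary double complex carrying a natural $H$-action, and then extract the desired isomorphism from two spectral sequences. Specifically, I would form the first-quadrant double complex $D^{p,q} = \Hom_B(\upB_p(B), Q_q(V))$, with horizontal differential induced by $\Bbul(B)$ and vertical differential induced by $\Qbul(V)$; here $Q_q(V)$ is regarded as a $B$-module via restriction of its $A$-module structure. Because $B$ is $H$-stable in $A$, the diagonal action makes $\upB_p(B) = B \otimes B_+^{\otimes p}$ a right $H$-module, and combined with the compatible left $H$-action on $Q_q(V)$ described in Section \ref{subsection:actionsviainjectives}, formula \eqref{eq:Hdiagonalaction} endows each $D^{p,q}$ with a left $H$-action for which both differentials are $H$-equivariant. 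Thus $\Tot(D)$ is a cochain complex of $H$-modules.

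Next I would run the two standard spectral sequences of the double complex, both of which are now spectral sequences of $H$-modules. Filtering by columns, $\upB_p(B)$ is $B$-free so $\Hom_B(\upB_p(B), -)$ is exact; since $\Qbul(V) \to V$ is a resolution, the $E_1$ page has $E_1^{p,0} = \Hom_B(\upB_p(B), V)$ and $E_1^{p,q} = 0$ for $q > 0$, collapsing at $E_2$ to $E_\infty^{p,0} = \opH^p(\Hom_B(\Bbul(B), V))$. Filtering by rows, Lemma \ref{lemma:injective} together with the standing assumption that $A$ is right $B$-flat makes each $Q_q(V)$ injective over $B$; since $\Bbul(B) \to k$ is a $B$-projective resolution, the $E_1$ page has $E_1^{0,q} = \Hom_B(k, Q_q(V))$ and $E_1^{p,q} = 0$ for $p > 0$, collapsing at $E_2$ to $E_\infty^{0,q} = \opH^q(\Hom_B(k, \Qbul(V)))$. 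Both spectral sequences converge to $\opH^\bullet(\Tot(D))$, yielding the natural $H$-equivariant isomorphism, with naturality in $V$ being immediate from functoriality of the construction.

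\textbf{Main obstacle.} The principal subtlety is tracking $H$-equivariance through the edge maps. One must verify that the comparison quasi-isomorphisms $\Hom_B(\Bbul(B), V) \to \Tot(D)$ and $\Hom_B(k, \Qbul(V)) \to \Tot(D)$, induced respectively by the augmentations $V \to Q_0(V)$ and $B = \upB_0(B) \to k$, preserve the $H$-action. This reduces to the fact that these augmentations are themselves $H$-equivariant -- a direct consequence of the compatibility between the $A$- and $H$-actions on $V$, together with the triviality of the $H$-action on $k$ -- so beyond careful bookkeeping the argument is routine.
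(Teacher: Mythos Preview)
Your proposal is correct and is precisely the standard double-complex balancing argument underlying Osborne's \cite[Corollary 3.12]{Osborne:2000}, which is what the paper invokes; the paper's proof is simply the one-line observation that Osborne's natural isomorphism is an $H$-module map, and you have unpacked that observation in detail. Your identification of the edge-map equivariance as the only nontrivial check, and your resolution of it via compatibility and the $H$-triviality of the augmentations, are exactly right.
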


\begin{proof}
One observes that the natural isomorphism
\[
\opH^n(\Hom_B(\Bbul(B),V)) \stackrel{\sim}{\rightarrow} \opH^n(\Hom_B(k,\Qbul(V)))
\]
constructed by Osborne for the proof of \cite[Corollary 3.12]{Osborne:2000} is a homomorphism of $H$-modules.
\end{proof}

\subsection{Actions on spectral sequences} \label{subsection:spectralsequences}

Our goal now is to show that the action of a Hopf algebra $H$ on an augmented algebra $A$ with normal subalgebra $B$ is well-behaved with respect to the Lyndon--Hochschild--Serre (LHS) spectral sequence associated to the pair $(A,B)$. For future reference in Section \ref{subsection:fgborel}, we recall a construction of the LHS spectral sequence.

\begin{theorem}[Lyndon--Hochschild--Serre Spectral Sequence] \label{theorem:LHSspectralsequence}
Let $A$ be an augmented algebra over $k$, and let $B$ be a normal subalgebra of $A$ such that $A$ is right $B$-flat. Let $V$ be a left $A$-module. Then there exists a spectral sequence satisfying
\begin{equation} \label{eq:spectralsequence}
E_2^{i,j} = \opH^i(A//B,\opH^j(B,V)) \Rightarrow \opH^{i+j}(A,V).
\end{equation}
\end{theorem}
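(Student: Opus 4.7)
The plan is to invoke the Grothendieck spectral sequence for the composition of the two left exact functors
\[
F = (-)^B \colon A\text{-Mod} \to (A//B)\text{-Mod} \quad \text{and} \quad G = (-)^{A//B} \colon (A//B)\text{-Mod} \to k\text{-Mod}.
\]
The functor $F$ is well-defined: since $B$ is normal in $A$, the inclusion $B_+A \subseteq AB_+$ shows that $V^B$ is an $A$-submodule of $V$, and since $B_+ \cdot V^B = 0$ one has $AB_+ \cdot V^B = 0$, so the $A$-action descends through $A//B$ (this is the degree-zero content of Lemma \ref{lemma:uniqueaction}). The composition $G \circ F$ is manifestly the $A$-invariants functor, whose right derived functors are $\opH^\bullet(A,V)$.

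I would then identify the derived functors of $F$ and $G$. Because $A$ is right $B$-flat, Lemma \ref{lemma:injective} ensures that any $A$-injective resolution of $V$ restricts to a $B$-injective resolution, so $R^qF(V) \cong \opH^q(B,V)$ as $k$-vector spaces; by Lemma \ref{lemma:uniqueaction} the $(A//B)$-module structure inherited from the resolution coincides with the standard one on $\opH^q(B,V)$. Trivially $R^p G = \opH^p(A//B,-)$.

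The technical heart of the argument is verifying that $F$ carries $A$-injective modules to $G$-acyclic $(A//B)$-modules. Every $A$-module $V$ embeds in the coinduced module $\Hom_k(A,V)$ via $v \mapsto (a \mapsto a.v)$, so each $A$-injective module is a direct summand of a coinduced module $I = \Hom_k(A,M)$, and it suffices to treat this case. With the standard left $A$-action $(a\cdot f)(a') = f(a'a)$, a direct calculation shows that $f \in I$ lies in $I^B$ precisely when $f(a'b) = \varepsilon(b) f(a')$ for all $a' \in A$ and $b \in B$, i.e.\ when $f$ vanishes on $AB_+$. This yields a natural isomorphism
\[
I^B \;\cong\; \Hom_k(A/AB_+,M) \;=\; \Hom_k(A//B,M),
\]
which is the analogous coinduced module over $A//B$ and hence is $(A//B)$-injective; in particular it is $G$-acyclic.

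With these hypotheses in place, the Grothendieck spectral sequence for $G \circ F$ delivers
\[
E_2^{i,j} = \opH^i(A//B,\opH^j(B,V)) \Rightarrow \opH^{i+j}(A,V),
\]
as claimed. I expect the only non-formal step to be the acyclicity check; but once the identification $I^B \cong \Hom_k(A//B,M)$ is made, the rest of the argument is a direct application of the Grothendieck machine and the reduction to coinduced injectives, both of which are completely standard.
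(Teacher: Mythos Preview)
Your argument via the Grothendieck spectral sequence is correct. The verification that $F$ carries $A$-injectives to $(A//B)$-injectives via the identification $\Hom_k(A,M)^B \cong \Hom_k(A//B,M)$ is clean and complete, and the reduction to coinduced modules through direct summands is standard.

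The paper, however, takes a different route: it builds the spectral sequence explicitly from the double complex $C^{i,j} = \Hom_A(P^i,Q_j)$, where $P^\bullet = \upB_\bullet(A//B)$ is the bar resolution and $Q_\bullet = Q_\bullet(V)$ is the coinduced resolution of $V$, and then compares the two canonical filtrations. Your Grothendieck argument is conceptually cleaner for establishing the bare existence of the spectral sequence. What the paper's construction buys, and what your approach does not immediately supply, is an explicit cochain-level model: the paper later (in Theorem~\ref{theorem:HmoduleLHSspecseq}) places an $H$-module structure on the spectral sequence by acting on the double complex $C^{i,j}$ via \eqref{eq:Hdiagonalaction}, and in Proposition~\ref{proposition:Rpermanentcycles} it carries out explicit cocycle computations inside specific $C^{i,j}$ to identify the image of a transgression. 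Both of these applications rely on having the concrete double complex in hand rather than just the abstract Grothendieck machine, so while your proof is perfectly valid for the theorem as stated, the paper's choice of construction is dictated by its downstream uses.
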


\begin{proof}[Summary of the construction]
We follow the construction of \cite[Chapter VI]{Barnes:1985}. Let $P^\bullet = \Bbul(A//B)$ be the left bar resolution of $A//B$, and let $\Qbul = \Qbul(V)$ be the coinduced resolution of $V$. Form the first quadrant double complex $C = C^{i,j} = \Hom_A(P^i,Q_j)$. There exist two canonical filtrations on $C$, the column-wise filtration $F_I^\bullet$, and the row-wise filtration $F_{II}^\bullet$, each of which gives rise to a spectral sequence converging to $\opH^\bullet(\Tot(C))$, the cohomology of the total complex \cite[Theorem 2.16]{McCleary:2001}. The spectral sequence determined by $F_{II}^\bullet$ collapses at the $E_2$-page and converges to $\opH^\bullet(A,V)$, while the $E_2^{i,j}$-term of the spectral sequence determined by $F_I^\bullet$ is as identified in (\ref{eq:spectralsequence}). Thus, the desired spectral sequence is the one determined by the column-wise filtration $F_I^\bullet$ of the complex $C$.
\end{proof}

Now let $H$ be a Hopf algebra, $A$ a right $H$-module algebra, and $B$ a normal subalgebra of $A$ stable under $H$. Then $A//B$ inherits from $A$ the structure of a right $H$-module algebra, making $P^\bullet = \Bbul(A//B)$ a complex of right $H$-modules. Suppose that the $A$- and $H$-module structures on $V$ are compatible. Then, for each $i,j \in \N$, \eqref{eq:Hdiagonalaction} defines a left $H$-module structure on $C^{i,j}$, and this makes $\Tot(C)$ a complex of $H$-modules. Moreover, the filtrations $F_I^\bullet$ and $F_{II}^\bullet$ of $\Tot(C)$ are filtrations by $H$-submodules.

\begin{theorem} \label{theorem:HmoduleLHSspecseq}
Maintain the notations and assumptions of Theorem \ref{theorem:LHSspectralsequence} and of the previous paragraph. Then \eqref{eq:spectralsequence} is a spectral sequence of left $H$-modules. The $H$-module actions on the $E_2$ page and on the abutment are the adjoint actions of $H$ defined in Definition \ref{definition:adjointaction}.
\end{theorem}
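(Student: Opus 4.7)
The plan is to carry out the construction of the Lyndon--Hochschild--Serre spectral sequence from the proof of Theorem \ref{theorem:LHSspectralsequence} entirely inside the category of left $H$-modules, and then to identify the resulting $H$-actions on the $E_2$-page and on the abutment as the adjoint actions of Definition \ref{definition:adjointaction}. The paragraph preceding the theorem already ensures that the double complex $C = \set{C^{i,j} = \Hom_A(P^i,Q_j)}$ is a double complex of left $H$-modules with $H$-linear horizontal and vertical differentials, and that the filtrations $F_I^\bullet,F_{II}^\bullet$ of $\Tot(C)$ are by $H$-submodules. Consequently the standard construction of the spectral sequence of a filtered complex takes place in the category of $H$-modules: each page $E_r^{i,j}$ is a subquotient of $H$-modules and each differential $d_r$ is $H$-linear, which already establishes that \eqref{eq:spectralsequence} is a spectral sequence of $H$-modules.

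To identify the $H$-action on $E_2^{i,j}$, I would first rewrite each column by means of the $H$-equivariant isomorphism $\Hom_A(P^i,Q_j) \cong \Hom_{A//B}(P^i,Q_j^B)$, valid because $B_+$ annihilates $P^i$ and $H$-equivariant because $Q_j^B$ is an $H$-submodule of $Q_j$ by Lemma \ref{lemma:stabilizesinvariants}. Since each $P^i$ is free as an $A//B$-module, vertical cohomology produces $\Hom_{A//B}(P^i,\opH^j(B,V))$ as an $H$-module, and Proposition \ref{proposition:equivalentaction} identifies the $H$-action on $\opH^j(B,V)$ obtained from the injective resolution $\Qbul$ with the adjoint action. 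The subsequent horizontal cohomology, computed against the bar resolution $P^\bullet = \Bbul(A//B)$, then yields $\opH^i(A//B,\opH^j(B,V))$ equipped precisely with the outer adjoint $A//B$-action constructed in Section \ref{subsection:actionsonbarresolution}.

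For the abutment, I would analyse the $F_{II}^\bullet$-spectral sequence. Each $Q_j = Q_j(V)$ is coinduced from $k$ to $A$, so $Q_j^B$ is coinduced from $k$ to $A//B$ and in particular is $A//B$-injective; hence $\Hom_{A//B}(P^\bullet,Q_j^B)$ has vanishing cohomology in positive horizontal degree and $H$-equivariantly coincides with $Q_j^A$ in degree zero. The spectral sequence then collapses at $E_2$ to $\opH^j(Q_\bullet^A)$, and one further application of Proposition \ref{proposition:equivalentaction} (now with $B$ replaced by $A$) identifies this with $\opH^j(A,V)$ carrying its adjoint $H$-action.

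The principal obstacle will be the bookkeeping needed to check $H$-equivariance of the various natural isomorphisms and edge maps used above, most of all the one identifying the abutment. This is however forced by the naturality of every arrow in the construction once one knows that all of them lift to the $H$-module category; no ingredient beyond Lemma \ref{lemma:stabilizesinvariants}, Proposition \ref{proposition:equivalentaction}, and the compatibility of the $A$- and $H$-module structures on $V$ will be required.
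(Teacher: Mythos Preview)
Your proposal is correct and follows essentially the same approach as the paper: the paper's proof is the single sentence ``This follows from the given construction of \eqref{eq:spectralsequence} and from the results in Section \ref{subsection:actionsviainjectives},'' and you have simply unpacked that sentence, using exactly the ingredients the paper points to (the $H$-module structure on the double complex set up in the preceding paragraph, Lemma \ref{lemma:stabilizesinvariants}, and Proposition \ref{proposition:equivalentaction}). One small wording issue: when you speak of ``the outer adjoint $A//B$-action constructed in Section \ref{subsection:actionsonbarresolution}'' you mean the adjoint $H$-action on $\opH^i(A//B,-)$; the point is that $A//B$ inherits a right $H$-module algebra structure and $\opH^j(B,V)$ carries compatible $A//B$- and $H$-module structures, so Theorem \ref{theorem:Hadjointaction} applies to the outer cohomology as well.
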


\begin{proof}
This follows from the given construction of \eqref{eq:spectralsequence} and from the results in Section \ref{subsection:actionsviainjectives}.
\end{proof}

Suppose $A$ is a bialgebra and $B$ is a normal sub-bialgebra of $A$. Then the LHS spectral sequence \eqref{eq:spectralsequence} admits cup products \cite[VI.3]{Barnes:1985}. Cup products can also be constructed under weaker conditions on $A$ and $B$, though a different construction for \eqref{eq:spectralsequence} is then required. The following theorem will be utilized in Section \ref{subsection:fgborel}.

\begin{theorem} \label{theorem:LHSspecseqofalgebras}
Let $A$ be an augmented algebra over $k$, and $B$ a central subalgebra of $A$. Assume that $A$ is right $B$-free. Then there exists a spectral sequence of algebras with
\begin{equation} \label{eq:centralLHSspecseq}
E_2^{i,j} = \opH^i(A//B,k) \otimes \opH^j(B,k) \Rightarrow \opH^{i+j}(A,k).
\end{equation}
Let $H$ be a cocommutative Hopf algebra, and suppose $A$ is a right $H$-module algebra, and $B$ is an $H$-submodule of $A$. Then \eqref{eq:centralLHSspecseq} is a spectral sequence of left $H$-module algebras.
\end{theorem}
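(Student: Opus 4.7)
The approach is to start from the Lyndon--Hochschild--Serre spectral sequence of Theorem~\ref{theorem:LHSspectralsequence} applied with $V=k$, simplify its $E_2$ page using centrality of $B$, and then install multiplicative structure via a separate filtered DGA construction.

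First I would apply Theorem~\ref{theorem:LHSspectralsequence} to obtain the spectral sequence $\opH^i(A//B,\opH^j(B,k))\Rightarrow\opH^{i+j}(A,k)$. The key reduction is that, when $B$ is central in $A$, the induced $A//B$-action on $\opH^\bullet(B,k)$ is trivial. This is the algebraic analogue of the classical group-theoretic fact that the outer action of a quotient on the cohomology of a central subgroup is trivial. To prove it here, I would work on the level of an $A$-injective resolution $Q^\bullet$ of $k$ (as in Section~\ref{subsection:actionsviainjectives}): the $A$-action on $Q^\bullet$ restricts to $(Q^\bullet)^B$ and induces the $A//B$-action on cohomology, and centrality of $B$ makes the commutation relations defining this action reduce to scalar multiplication by the augmentation. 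Given this triviality, $\opH^j(B,k)$ is a trivial $A//B$-module, so $\opH^i(A//B,\opH^j(B,k))\cong\opH^i(A//B,k)\otimes\opH^j(B,k)$ by additivity of cohomology.

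Second, because $A$ need not be a bialgebra, the double complex of Theorem~\ref{theorem:LHSspectralsequence} does not automatically carry cup products. I would circumvent this by filtering the bar cochain DGA $\Cbul(A,k)=\Hom_k(A_+^{\otimes\bullet},k)$, whose cup product is defined even without a coproduct on $A$. Concretely, the decreasing filtration I have in mind places $f$ in $F^p C^n(A,k)$ iff $f$ vanishes on all tensors in which fewer than $p$ of the positions are occupied by elements outside $B_+$. Cup product respects this filtration by inspection, and the bar differential does as well: when the differential multiplies two adjacent tensor factors, centrality of $B$ means that moving $B$-elements past $A$-elements does not change the number of non-$B$ positions. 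Using the right $B$-free decomposition of $A$, one identifies the associated graded with $\Cbul(A//B,k)\otimes\Cbul(B,k)$ as a bigraded DGA, yielding a multiplicative spectral sequence with the claimed $E_2$ page. A comparison argument identifies this with the LHS spectral sequence from the previous paragraph.

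For the $H$-equivariance statement, Theorem~\ref{theorem:HmoduleLHSspecseq} already endows the LHS spectral sequence with an $H$-action, and the filtered DGA above is $H$-stable because $B$, and hence the filtration, is preserved by $H$ (the $H$-action on $\Cbul(A,k)$ comes from Theorem~\ref{theorem:Hadjointaction}). Cocommutativity of $H$ ensures that cup products and tensor products of $H$-module algebras remain $H$-equivariant under the diagonal action, so the spectral sequence becomes one of $H$-module algebras. The main technical obstacle throughout is the filtration-differential compatibility in the second paragraph; centrality of $B$ is precisely the input that makes it work, and without it the LHS construction has no natural multiplicative refinement.
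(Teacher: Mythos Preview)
Your outline is sound and, in substance, goes well beyond what the paper itself does: the paper's entire proof is a one-line citation to the construction in McCleary's \emph{A User's Guide to Spectral Sequences}, Theorem~9.12, together with the subsequent remarks that the resulting spectral sequence agrees with \eqref{eq:spectralsequence} from the $E_2$-page onward (Barnes, VIII.3) and that centrality of $B$ forces the $A//B$-action on $\opH^\bullet(B,k)$ to be trivial (Ginzburg--Kumar, Lemma~5.2.2). Your plan is a reasonable unpacking of exactly such a filtered-DGA construction on the bar cochains, and your comparison step with the LHS sequence of Theorem~\ref{theorem:LHSspectralsequence} matches the paper's post-proof remark.

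One small sharpening: the Hochschild--Serre-type filtration you describe is preserved by the bar differential already when $B$ is normal in the sense of Section~\ref{subsection:cohomologyofnormalsubalgebras}; centrality is not needed for that step. Where centrality genuinely enters is (i) in making the $A//B$-action on $\opH^\bullet(B,k)$ trivial, so that $E_2$ factors as a tensor product, and (ii) in identifying the associated graded of the filtered bar DGA with the \emph{tensor product} DGA $\Cbul(A//B,k)\otimes\Cbul(B,k)$ rather than a twisted version. Your sentence ``centrality of $B$ means that moving $B$-elements past $A$-elements does not change the number of non-$B$ positions'' is really about (ii), not about preservation of the filtration itself. With that adjustment your argument is correct and is essentially the construction the paper is pointing to.
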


\begin{proof}
See the construction in the proof of \cite[Theorem 9.12]{McCleary:2001}.
\end{proof}

One can show that the spectral sequences \eqref{eq:spectralsequence} and \eqref{eq:centralLHSspecseq} are isomorphic from the $E_2$-page onward; see \cite[VIII.3]{Barnes:1985}. Implicit in \eqref{eq:centralLHSspecseq} is the fact that when $B$ is central in $A$, the action of $A//B$ on $\opH^\bullet(B,k)$ is trivial; see \cite[Lemma 5.2.2]{Ginzburg:1993}.

\section{Cohomology of the first Frobenius--Lusztig kernel} \label{section:firstFLkernel}

Our goal now is to study the cohomology ring $\opH^\bullet(\Uz(G_1),k)$ for the Frobenius--Lusztig kernel $\Uz(G_1)$ of $\Uz$. The first step is to compute the cohomology ring $\opH^\bullet(\uz,k)$ for the small quantum group $\uz$.

\subsection{Cohomology of the small quantum group} \label{subsection:uzgcohomology}

The strategy for computing the cohomology ring $\opH^\bullet(\uz,k)$ when $p:= \chr(k) > 0$ is largely analogous to the strategy of Ginzburg and Kumar for the case $k = \C$. We summarize the main points of the computation for later reference in Section \ref{subsection:restrictionmaps}. For more details on the case $p > 0$, see \cite{Drupieski:2009}.

\begin{theorem} \label{theorem:inductionspectralsequence}
There exists a first quadrant spectral sequence of $G$-modules satisfying
\begin{equation} \label{eq:inductionspectralsequence}
E_2^{i,j} = R^i \ind_B^G \opH^j(\uzb, k) \Rightarrow \opH^{i+j}(\uz,k).
\end{equation}
\end{theorem}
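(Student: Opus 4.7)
The plan is to adapt to positive characteristic the construction of Ginzburg and Kumar \cite{Ginzburg:1993} for the analogous induction spectral sequence in characteristic zero; a full account in the setting $\chr(k) = p > 0$ appears in the author's thesis \cite{Drupieski:2009}.

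The first task is to equip $\opH^\bullet(\uzb, k)$ with the structure of a rational $B$-module, which is needed even to state the spectral sequence. By Proposition~\ref{proposition:adjointstable}, $\uz$ is normal in $\Uz$; in particular $\uzb = \uz \cap \Uzb$ is normal in $\Uzb$. Triangular decomposition (Section~\ref{subsection:braidgroupauto}) makes $\Uzb$ right $\uzb$-free, and the quantum Frobenius morphism identifies the Hopf algebra quotient $\Uzb // \uzb$ with $\Dist(B)$. Then Lemma~\ref{lemma:uniqueaction}, applied to the pair $(\Uzb, \uzb)$, endows each $\opH^j(\uzb, V)$ with a natural $\Dist(B)$-action, i.e.\ a rational $B$-module structure, for every $\Uzb$-module $V$.

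Next, I would realize the spectral sequence as the Grothendieck spectral sequence for the composition of left-exact functors
\[
F \colon \mathrm{Mod}(\Uz) \longrightarrow \mathrm{Mod}(B), \quad V \mapsto V^{\uzb}, \qquad \ind_B^G \colon \mathrm{Mod}(B) \longrightarrow \mathrm{Mod}(G),
\]
whose right derived functors are $\opH^\bullet(\uzb, -)$ (with the $B$-module structure from the first step) and $R^\bullet \ind_B^G$, respectively. Because $\Uz$ is right $\uzb$-flat by triangular decomposition, Lemma~\ref{lemma:injective} ensures that $\Uz$-injectives are $\uzb$-injective, hence $F$-acyclic; one then checks that $F$ sends $\Uz$-injectives to $\ind_B^G$-acyclic $B$-modules, producing the desired $E_2$-page. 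First-quadrant convergence is automatic from the vanishing of $R^i \ind_B^G$ for $i > \dim(G/B)$.

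The main technical obstacle is identifying the abutment with $\opH^{i+j}(\uz, k)$. Already at the underived level one has $(\ind_B^G \circ F)(k) = k = k^{\uz}$, but promoting this to an agreement of right derived functors reduces to showing that $\ind_B^G(I^{\uzb}) \cong I^{\uz}$ as $G$-modules (via the Frobenius quotient $\Uz//\uz \cong \Dist(G)$) for each $\Uz$-injective module $I$ appearing in an injective resolution of $k$. This is the quantum analogue of the central identification underlying the classical Friedlander--Parshall spectral sequence for Frobenius kernels of algebraic groups; its verification rests on the structure theory of injective integrable $\Uz$-modules (baby Verma filtrations and the injectivity and self-duality of the Steinberg module, as developed in Section~\ref{section:representationtheory}) together with a Kempf-type vanishing argument on $G/B$.
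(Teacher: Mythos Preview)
Your proposal is in the right spirit but takes a harder road to the abutment identification than the paper does, and the final paragraph is where the real work lies. The paper avoids checking $\ind_B^G(I^{\uzb}) \cong I^{\uz}$ on individual injectives by a clean adjunction argument: rather than working from $\Uz$-modules, it takes the source category to be integrable $\Uzb$-modules and introduces \emph{two} compositions of left-exact functors to rational $G$-modules,
\[
\calF_1 = (-)^{\uz} \circ \opH^0(\Uz/\Uzb, -), \qquad \calF_2 = \ind_B^G \circ (-)^{\uzb}.
\]
Both are right adjoint to the single functor $(-)^{[1]}|_{\Uzb}$, hence naturally isomorphic, so their right derived functors coincide. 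Each composition yields its own Grothendieck spectral sequence. The one for $\calF_1$ has $E_2^{i,j} = \opH^i(\uz, R^j\opH^0(\Uz/\Uzb, k))$, and this collapses at $E_2$ by the quantum Kempf vanishing theorem of Ryom-Hansen \cite{Ryom-Hansen:2003}, giving $R^\bullet\calF_1(k) \cong \opH^\bullet(\uz, k)$. The spectral sequence for $\calF_2$ is then the desired one, with abutment already identified via $\calF_1 \cong \calF_2$.

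Your direct verification would require, at minimum, showing that $F$ sends $\Uz$-injectives to $\ind_B^G$-acyclics and that $\ind_B^G((-)^{\uzb})$ and $(-)^{\uz}$ agree on those injectives; the appeal to baby Verma filtrations and the Steinberg module does not by itself accomplish this. The paper's two-functor trick handles both issues at once, and makes the role of Kempf vanishing (which you correctly anticipate) completely transparent: it is what collapses the $\calF_1$ spectral sequence, not something one applies to individual injectives.
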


\begin{proof}
Define functors $\calF_1$ and $\calF_2$ from the category of integrable $\Uzb$-modules to the category of rational $G$-modules by
\begin{equation} \label{eq:equivalentfunctors}
\begin{split}
\calF_1(-) &= (-)^{\uz} \circ \opH^0(\Uz/\Uzb,-), \text{ and} \\
\calF_2(-) &= \ind_B^G(-) \circ (-)^{\uzb}.
\end{split}
\end{equation}
Here we have identified the category of rational $G$-modules with the category of locally finite $\Dist(G)$-modules \cite{Sullivan:1978b}, and the category of rational $B$-modules with the category of integrable $\Dist(B)$-modules \cite[Theorem 9.4]{Cline:1980}. The induction functors $\opH^0(\Uz/\Uzb,-)$ and $\ind_B^G(-)$ are left exact and take injective modules to injective modules. The fixed-point functor $(-)^{\uz}$ is right adjoint to the (exact) forgetful functor $(-)^{[1]}$ from the category of of rational $G$-modules to integrable $\Uz$-modules, so maps injective modules to injective modules. Similarly, the fixed-point functor $(-)^{\uzb}$ maps injective modules to injective modules.

In the definition of $\calF_1$, $(-)^{\uz}$ is considered as a functor from the category of integrable $\Uz$-modules to the category of rational $G$-modules. The right derived functors $R^i(-^{\uz})$ are defined in terms of injective resolutions by integrable $\Uz$-modules. Since the induction functor $\opH^0(\Uz/\uz,-)$ is exact by \cite[Corollary 2.3]{Andersen:1992a}, it follows from a standard argument (see, e.g., \cite[Proposition 2.1]{Cline:1977}) that injective integrable modules $\Uz$-modules are injective for $\uz$. Then we may identify identify $R^i(-^{\uz})$ with $\opH^i(\uz,-)$. Similarly, we may identify the right derived functors $R^i(-^{\uzb})$ with $\opH^i(\uzb,-)$.

The functors $\calF_1$ and $\calF_2$ are both right adjoint to the functor $(-)^{[1]}|_{\Uzb}$, hence are naturally isomorphic. Then by \cite[I.4.1]{Jantzen:1996}, there exist spectral sequences
\begin{align*}
E_2^{i,j} &= \opH^i(\uz,\opH^j(\Uz/\Uzb,k)) \Rightarrow (R^{i+j} \calF_1)(k), \text{ and} \\
E_2^{i,j} &= R^i \ind_B^G \opH^j(\uzb,k) \Rightarrow (R^{i+j} \calF_2)(k),
\end{align*}
converging to the same abutment. By \cite[Theorem 5.5]{Ryom-Hansen:2003}, $\opH^i(\Uz/\Uzb,k) = 0$ for all $i>0$. Then the first spectral sequence collapses at the $E_2$ page, giving $(R^\bullet \calF_1)(k) \cong (R^\bullet \calF_2)(k) \cong \opH^\bullet(\uz,k)$. So the second spectral sequence is the spectral sequence of the theorem.
\end{proof}

\begin{remarks} \ 
\begin{enumerate}
\item In the case $p=0$, Andersen, Polo and Wen \cite{Andersen:1992,Andersen:1991} prove by different methods that every (finite-dimensional) injective $\Uz$-module restricts to an injective $\uz$-module. 

\item It follows from Proposition \ref{proposition:equivalentaction} that the $G$-module structure on $\opH^\bullet(\uz,k)$ in \eqref{eq:inductionspectralsequence} is equivalent to the adjoint action of $\Dist(G) \cong \Uz//\uz$ on $\opH^\bullet(\uz,k)$.
\end{enumerate}
\end{remarks}

By Lemma \ref{lemma:cohomologyHmodulealgebra} and Corollary \ref{corollary:uzbstable}, the ring $\opH^\bullet(\uzb,k)$ is a $\Uzb$-module algebra.

\begin{theorem} \label{theorem:uzbalgebrastructure}
Suppose $\ell > h$. Let $S^\bullet(\fraku^*)$ be the symmetric algebra on $\fraku^*$, with $B$-module (equivalently, $\Dist(B)$-module) structure induced by the adjoint action of $B$ on $\fraku$. Then $\opH^{\textup{odd}}(\uzb,k) = 0$, and there exists an isomorphism of $\Uzb$-module algebras $\opH^{2\bullet}(\uzb,k) \cong S^\bullet(\fraku^*)^{[1]}$.
\end{theorem}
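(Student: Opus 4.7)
The plan is to adapt the Ginzburg--Kumar strategy \cite{Ginzburg:1993}, as developed in \cite{Drupieski:2009}, by first passing from the cohomology of $\uzb$ to the cohomology of $\uzu$ and then identifying the latter explicitly using a PBW-style spectral sequence.

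First, I would apply the LHS spectral sequence \eqref{eq:spectralsequence} to the normal Hopf subalgebra inclusion $\uzu \subset \uzb$. The quotient $\uzb//\uzu$ identifies with $\uzo$, which is isomorphic to the group algebra of the finite abelian group $X/\ell X$, hence is semisimple because $\ell$ is coprime to $\chr(k)$. The spectral sequence therefore collapses on the $E_2$-page to give a graded algebra isomorphism
\[
\opH^\bullet(\uzb,k) \;\cong\; \opH^\bullet(\uzu,k)^{\uzo}.
\]
By Theorem \ref{theorem:HmoduleLHSspecseq} and Corollary \ref{corollary:uzbstable}, this is in fact an isomorphism of $\Uzb$-module algebras, with $\Uzb$ acting on the right-hand side through its adjoint action on $\uzu$.

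Next, I would compute $\opH^\bullet(\uzu,k)$ as a $\uzo$-module algebra. Using the PBW basis \eqref{eq:Fdivpowerbasis} and an ordering of $\Phi^+$, the root-height filtration on $\uzu$ has as its associated graded a ``quantum truncated polynomial'' algebra, essentially a braided tensor product of the cyclic algebras $k[F_\gamma]/(F_\gamma^\ell)$ for $\gamma\in \Phi^+$. The cohomology of each such cyclic factor is $\Lambda(x_\gamma) \otimes k[y_\gamma]$ with $|x_\gamma|=1$ of $\uzo$-weight $\gamma$, and $|y_\gamma|=2$ of $\uzo$-weight $\ell\gamma$ (arising via a Bockstein/cup-square construction). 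A May-type spectral sequence based on this filtration then computes, at the level of $\uzo$-graded vector spaces,
\[
\opH^\bullet(\uzu,k) \;\cong\; \bw\nolimits^\bullet(\fraku^*) \otimes S^\bullet(\fraku^*),
\]
with the exterior factor in cohomological degree $1$ carrying the $\uzo$-weights $\{\gamma : \gamma\in\Phi^+\}$ and the symmetric factor in cohomological degree $2$ carrying the weights $\{\ell\gamma : \gamma\in\Phi^+\}$. The hypothesis $\ell > h$ is what forces this spectral sequence to collapse and, more importantly, controls the weight combinatorics in the final step.

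Finally, I would pass to $\uzo$-invariants. A nonzero class in $\bw^p(\fraku^*)\otimes S^q(\fraku^*)$ carries $\uzo$-weight of the form $\sum_{i=1}^{p}\gamma_i + \ell \mu$ with the $\gamma_i \in \Phi^+$ distinct and $\mu$ a sum of positive roots; lying in $\ell X$ forces $\sum \gamma_i \in \ell X$. But any nonempty sum of distinct positive roots has height bounded by the height of $2\rho$, namely $2(h-1) < 2\ell$, and is a strict positive combination of simple roots, so it cannot lie in $\ell X$ unless $p=0$. Thus the exterior factor is killed in positive degrees, $\opH^{\textup{odd}}(\uzb,k)=0$, and $\opH^{2\bullet}(\uzb,k)\cong S^\bullet(\fraku^*)$ as a graded vector space. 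The residual $\Uzb$-action on the surviving generators has $\uzo$-weight $\ell\gamma$, so it factors through $F_\zeta \colon \Uzb \to \Dist(B)$ and reproduces the adjoint $B$-action on $\fraku^*$; this is precisely $S^\bullet(\fraku^*)^{[1]}$.

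The main obstacle is the middle step: identifying $\opH^\bullet(\uzu,k)$ as the asserted tensor product with the correct $\uzo$-weight grading on generators, and ensuring that the May spectral sequence associated to the root-height filtration collapses with no further extensions (so in particular that the $y_\gamma$ really lift to classes of weight $\ell\gamma$, and not merely up to higher-order corrections). Once Step 2 is secured, Step 3 is a routine weight computation enabled by $\ell > h$, and the identification with $S^\bullet(\fraku^*)^{[1]}$ as a $\Uzb$-module algebra follows from the fact that the $\ell$-multiplied weights are exactly what the quantum Frobenius produces.
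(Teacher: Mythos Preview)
Your Steps 1--3 are essentially the Ginzburg--Kumar argument \cite[\S 2.5]{Ginzburg:1993}, and the paper explicitly says this part carries over to characteristic $p>0$ unchanged. So for the $\Uzo$-module algebra isomorphism $\opH^{2\bullet}(\uzb,k)\cong S^\bullet(\fraku^*)$ and the vanishing of odd cohomology, your outline matches the paper's approach. (Your worry about the May spectral sequence collapsing is misplaced: this is the step that works the same in all characteristics.)

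The genuine gap is your final paragraph, where you pass from the $\Uzo$-structure to the full $\Uzb$-structure. You argue that since the surviving generators have weight $\ell\gamma$, the $\Uzb$-action factors through $F_\zeta$ and therefore ``reproduces the adjoint $B$-action on $\fraku^*$.'' The first clause is fine---normality of $\uzb$ in $\Uzb$ forces the action to factor through $\Uzb//\uzb\cong\Dist(B)$. But the second clause does not follow: knowing the $T$-weights determines the $\Dist(T)$-action, not the $\Dist(U)$-action. There are many $B$-modules with the same weights as $\fraku^*$ (for instance, the one with trivial $U$-action), and nothing in your argument distinguishes among them. The paper flags exactly this point: the Ginzburg--Kumar argument for the $\Uzb$-action \cite[\S 2.6]{Ginzburg:1993} relies on $\Dist(U)$ being generated by the images of the $F_\alpha^{(\ell)}$, which is false when $\chr(k)=p>0$.

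The paper's fix is to verify directly, by an explicit computation at the cochain level (carried out in \cite[\S 4.2]{Drupieski:2009}), that $\opH^2(\uzb,k)\cong(\fraku^*)^{[1]}$ as a $\Uzb$-module. Once this is known in degree~$2$, Lemma~\ref{lemma:cohomologyHmodulealgebra} (that $\opH^\bullet(\uzb,k)$ is an $H$-module algebra under the adjoint action) together with the fact that $\opH^\bullet(\uzb,k)$ is generated in degree~$2$ propagates the $\Uzb$-module identification to all degrees. You have correctly identified the hard part as lying outside the weight combinatorics, but you have located it in the wrong step.
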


\begin{proof}
The argument in \cite[\S 2.5]{Ginzburg:1993} establishing $\opH^{2\bullet}(\uzb,k) \cong S^\bullet(\fraku^*)^{[1]}$ as $\Uzo$-module algebras in the case $p = 0$ applies equally well if $p > 0$. On the other hand, the argument in \cite[\S 2.6]{Ginzburg:1993} for the $\Uzb$-action on $\opH^\bullet(\uzb,k)$ does not generalize to the case $p > 0$, because then $\Dist(U) \cong \Uzu//\uzu$ is not generated by the images of the $\ell$-th divided powers in $\Uz^-$. Still, one can show that $\opH^2(\uzb,k) \cong (\fraku^*)^{[1]}$ as a $\Uzb$-module; see \cite[\S 4.2]{Drupieski:2009}. Then Lemma \ref{lemma:cohomologyHmodulealgebra} and the fact that $\opH^\bullet(\uzb,k)$ is generated in degree two imply that the $\Uzb$-module isomorphism $\opH^2(\uzb,k) \cong (\fraku^*)^{[1]}$ extends to an isomorphism of $\Uzb$-module algebras $\opH^{2\bullet}(\uzb,k) \cong S^\bullet(\fraku^*)^{[1]}$.
\end{proof}

Recall that $p = \chr(k)$ is said to be good for the root system $\Phi$ if $p > 2$ (resp. $p > 3$, $p > 5$) when $\Phi$ has a component not of type $A$ (resp.\ has a component of exceptional type, resp.\ of type $E_8$). We now finish the computation of $\opH^\bullet(\uz,k)$ when $\ell > h$ and when $p = \chr(k)$ is good for $\Phi$.

\begin{theorem} \label{theorem:uzgcohomology}
Suppose $\ell > h$, and that $k$ is algebraically closed of characteristic good for $\Phi$. Then $\opH^{\textup{odd}}(\uz,k) = 0$, and there exist $G$-module algebra isomorphisms $\opH^{2\bullet}(\uz,k) \cong \ind_B^G S^\bullet(\fraku^*) \cong k[\calN]$, where $k[\calN]$ is the coordinate ring of the variety $\calN$ of nilpotent elements in $\g = \Lie(G)$. In particular, $\opH^\bullet(\uzg,k)$ is finitely generated as a ring.
\end{theorem}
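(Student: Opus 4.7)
The plan is to feed Theorem \ref{theorem:uzbalgebrastructure} into the Lyndon--Hochschild--Serre-type spectral sequence of Theorem \ref{theorem:inductionspectralsequence} and then appeal to the classical cohomology vanishing theorems for induced bundles on $G/B$ in good characteristic. Recall that the $E_2$-page of that spectral sequence has the form $E_2^{i,j} = R^i \ind_B^G \opH^j(\uzb,k)$, where the $B$-module structure on $\opH^j(\uzb,k)$ is the one obtained from the $\Uzb$-module structure via the quantum Frobenius $F_\zeta$. Substituting the computation of $\opH^\bullet(\uzb,k)$ from Theorem \ref{theorem:uzbalgebrastructure} gives $E_2^{i,2j+1} = 0$ and $E_2^{i,2j} = R^i \ind_B^G S^j(\fraku^*)$.

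The next step is to invoke the cohomology-vanishing theorem for $S^\bullet(\fraku^*)$: under the standing hypotheses ($p$ good for $\Phi$), one has $R^i \ind_B^G S^j(\fraku^*) = 0$ for all $i > 0$ and all $j \geq 0$. In characteristic zero this is due to Hesselink, and the extension to groups in good characteristic is the main theorem of Kumar--Lauritzen--Thomsen. This vanishing makes the spectral sequence collapse at $E_2$ onto the row $i = 0$, which yields $\opH^{\textup{odd}}(\uz,k) = 0$ and a graded $G$-module isomorphism $\opH^{2\bullet}(\uz,k) \cong \ind_B^G S^\bullet(\fraku^*)$. Because the spectral sequence of Theorem \ref{theorem:inductionspectralsequence} is multiplicative (this can be verified directly from the double-complex construction in Section \ref{subsection:spectralsequences}, using that $\opH^\bullet(\uzb,k)$ is a $\Uzb$-module algebra by Lemma \ref{lemma:cohomologyHmodulealgebra}) and the odd rows vanish, the edge isomorphism is in fact one of $G$-module algebras.

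To identify $\ind_B^G S^\bullet(\fraku^*)$ with $k[\calN]$, the plan is to use the Springer resolution $\mu : G \times^B \fraku \to \calN$. One has $\ind_B^G S^\bullet(\fraku^*) \cong k[G \times^B \fraku]$ canonically (as $G$-algebras), and the pullback $\mu^*: k[\calN] \to k[G \times^B \fraku]$ is an isomorphism in good characteristic --- this is again a theorem of Kumar--Lauritzen--Thomsen (building on Hesselink, Kempf, and Veldkamp), which requires the normality of $\calN$ and the fact that $\mu$ is a proper birational morphism with trivial higher direct images. Composing the two isomorphisms gives the desired algebra isomorphism $\opH^{2\bullet}(\uz,k) \cong k[\calN]$. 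Finite generation of $\opH^\bullet(\uz,k)$ as a ring is then immediate, since $k[\calN]$ is a finitely generated quotient of $k[\g^*]$.

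The main obstacle is not the spectral sequence argument itself, which is a direct transcription of the Ginzburg--Kumar approach, but rather verifying that the required characteristic-$p$ inputs --- the vanishing of $R^i \ind_B^G S^j(\fraku^*)$ for $i > 0$ and the normality/Cohen--Macaulayness of $\calN$ that identifies $\ind_B^G S^\bullet(\fraku^*)$ with $k[\calN]$ --- are available. Both require $p$ to be good for $\Phi$, which is precisely the hypothesis of the theorem, and both are by now standard consequences of the work of Kumar--Lauritzen--Thomsen, so the argument goes through essentially verbatim from the characteristic-zero case once these citations are in hand.
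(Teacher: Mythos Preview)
Your proposal is correct and follows essentially the same approach as the paper: feed Theorem~\ref{theorem:uzbalgebrastructure} into the spectral sequence of Theorem~\ref{theorem:inductionspectralsequence}, invoke Kumar--Lauritzen--Thomsen for the vanishing of $R^i\ind_B^G S^\bullet(\fraku^*)$ in good characteristic, collapse at $E_2$, and then identify $\ind_B^G S^\bullet(\fraku^*)$ with $k[\calN]$. The only minor differences are in justification: the paper cites \cite[Remark~3.2]{Andersen:1984} for the algebra-isomorphism step (rather than asserting multiplicativity of the spectral sequence, which is a Grothendieck spectral sequence and not literally the LHS construction of Section~\ref{subsection:spectralsequences}), and describes the identification with $k[\calN]$ via the Frobenius-reciprocity map $S^\bullet(\g^*)\to\ind_B^G S^\bullet(\fraku^*)$ rather than the Springer resolution, but these are equivalent formulations of the same result.
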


\begin{proof}
Since $p$ is good for $\Phi$, we have $R^i \ind_B^G S^\bullet(\fraku^*) = 0$ for all $i > 0$ by \cite[Theorem 2]{Kumar:1999}. Then the spectral sequence \eqref{eq:inductionspectralsequence} collapses at the $E_2$-page, yielding $\opH^{\textup{odd}}(\uz,k) = 0$, and the $G$-module isomorphism $\opH^{2\bullet}(\uz,k) \cong \ind_B^G S^\bullet(\fraku^*)$. The isomorphism $\opH^{2\bullet}(\uz,k) \cong \ind_B^G S^\bullet(\fraku^*)$ is an isomorphism of algebras by the argument in \cite[Remark 3.2]{Andersen:1984}. For $p$ good, the map $\rho: S^\bullet(\g^*) \rightarrow \ind_B^G S^\bullet(\fraku^*)$ induced by Frobenius reciprocity from the restriction map $S^\bullet(\g^*) \rightarrow S^\bullet(\fraku^*)$ induces a $G$-module algebra isomorphism $k[\calN] \stackrel{\sim}{\rightarrow} \ind_B^G S^\bullet(\fraku^*)$; see the argument in \cite[\S 3.5]{Nakano:1996a} (cf.\ also \cite[\S 6.20]{Humphreys:1995}).
\end{proof}

The $G$-module structure of $\opH^\bullet(\uz,k)$ can also be determined for most values of $\ell$ smaller than $h$; see \cite{Bendel:2011,Drupieski:2009}. The last theorem of this section was obtained in the special case $\chr(k) = 0$ by Mastnak, Pevtsova, Schauenburg and Witherspoon \cite{Mastnak:2010} as a corollary to their study of the cohomology of finite-dimensional pointed Hopf algebras. Their techniques apply equally well to the study of $\opH^\bullet(\uz,k)$ when $\chr(k) > 0$, because the group $G(\uz)$ of grouplike elements in $\uz$ is isomorphic to $(\Z/\ell\Z)^n$, hence semisimple over $k$.

\begin{theorem} \label{theorem:mastnak} \textup{\cite[Corollary 6.5]{Mastnak:2010}}
The cohomology ring $\opH^\bullet(\uz,k)$ is finitely generated. For any finite-dimensional $\uz$-module $M$, $\opH^\bullet(\uz,M)$ is finitely-generated as a module over $\opH^\bullet(\uz,k)$.
\end{theorem}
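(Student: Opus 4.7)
The plan is to follow the argument of Mastnak--Pevtsova--Schauenburg--Witherspoon directly, tracking where $\chr(k)=0$ enters their proof and verifying that it is needed only to guarantee semisimplicity of the coradical. Their theorem applies to any finite-dimensional pointed Hopf algebra whose coradical is a semisimple subalgebra. For $\uz$ the coradical is $k[G(\uz)]$, where $G(\uz)=\subgrp{K_\alpha:\alpha\in\Pi}\cong(\Z/\ell\Z)^n$; our running assumptions $\chr(k)=p$ and $\gcd(p,\ell)=1$ together with Maschke's theorem give that $k[G(\uz)]$ is semisimple. This is the only essential use of the characteristic in their argument.

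Next I would recall the structural input. By the Andruskiewitsch--Schneider lifting theory for pointed Hopf algebras with abelian coradical, the associated graded of $\uz$ with respect to its coradical filtration has the form $\gr\uz\cong R\,\#\,k[G(\uz)]$, a Radford biproduct in which $R$ is a Nichols algebra of diagonal type built from the images of the $E_\alpha$ and $F_\alpha$. For this $R$ there is an explicit finite-type braided Koszul-type resolution of the trivial module; the differentials involve only the combinatorial data of the braiding, whose scalars are powers of $\zeta$, hence units in $k$. Computing through this resolution shows that $\opH^\bullet(R,k)$ is a finitely generated algebra, and that $\opH^\bullet(R,M)$ is a finitely generated module over $\opH^\bullet(R,k)$ for any finite-dimensional $R$-module $M$. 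Because $k[G(\uz)]$ is semisimple, taking $G(\uz)$-invariants is exact and preserves finite generation, yielding finite generation of $\opH^\bullet(\gr\uz,k)$ and of $\opH^\bullet(\gr\uz,N)$ over it for any finite-dimensional $\gr\uz$-module $N$.

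Finally I would lift finite generation from $\gr\uz$ to $\uz$ using the multiplicative spectral sequence of the coradical filtration, which is a first-quadrant spectral sequence of algebras converging from $\opH^\bullet(\gr\uz,k)$ to $\opH^\bullet(\uz,k)$; for $\opH^\bullet(\uz,M)$ the analogous spectral sequence applies, with $M$ filtered by the restriction of the coradical filtration. Pick finitely many permanent cycles representing algebra generators of $E_\infty^{\bullet,\bullet}$, lift them arbitrarily to $\opH^\bullet(\uz,k)$, and argue by induction on filtration degree that these lifts, together with all classes below some finite degree, generate. The module statement is handled in the same way. The main technical obstacle in the whole plan is the construction and analysis of the Koszul-type resolution for the Nichols algebra $R$ and the verification that its differentials remain well-defined after base change to characteristic $p$; everything else is a formal consequence of semisimplicity of the coradical and general spectral-sequence arguments that are insensitive to the characteristic, so all of it carries over from the original characteristic-zero treatment of \cite{Mastnak:2010}.
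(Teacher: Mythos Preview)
Your plan is correct and is precisely the approach the paper takes. The paper does not supply an independent proof of this theorem; it simply cites \cite[Corollary 6.5]{Mastnak:2010} and remarks in the preceding paragraph that the MPSW techniques carry over to $\chr(k)>0$ because $G(\uz)\cong(\Z/\ell\Z)^n$, so $k[G(\uz)]$ is semisimple---exactly the observation around which you organized your sketch.
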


\subsection{Restriction maps} \label{subsection:restrictionmaps}

Let $\Phi_J \subseteq \Phi$ be an indecomposable root subsystem of $\Phi$ corresponding to a subset of simple roots $J \subseteq \Pi$. Set $\Phi' = \Phi_J$, and let $\uz(\g'),\uz(\frakb'),\uz(\fraku')$ be the small quantum groups defined in terms of $\Phi'$. Then the inclusion of root systems $\Phi' \subseteq \Phi$ induces injective algebra homomorphisms $\uz(\g') \rightarrow \uzg$, $\uz(\frakb') \rightarrow \uzb$, and $\uz(\fraku') \rightarrow \uzu$. For example, the map $\uz(\g') \rightarrow \uzg$ maps $E_\alpha \mapsto E_\alpha$, $F_\alpha \mapsto F_\alpha$, and $K_\alpha \mapsto K_\alpha$ for all $\alpha \in J$. Now suppose $\ell > h$, where $h$ is the Coxeter number of $\Phi$. Then also $\ell > h'$, the Coxeter number of $\Phi'$, so Theorem \ref{theorem:uzbalgebrastructure} applies to both $\opH^\bullet(\uzb,k)$ and $\opH^\bullet(\uz(\frakb'),k)$, and Theorem \ref{theorem:uzgcohomology} applies to both $\opH^\bullet(\uzg,k)$ and $\opH^\bullet(\uz(\g'),k)$.

\begin{lemma} \textup{\cite[Proposition 5.7]{Drupieski:2009}}  \label{lemma:uzbrestriction}
Suppose $\ell > h$. Then under the identifications of Theorem \ref{theorem:uzbalgebrastructure}, the restriction map $\opH^{2\bullet}(\uzb,k) \rightarrow \opH^{2\bullet}(u_\zeta(\frakb'),k)$ is simply the restriction of functions from $\fraku$ to $\fraku'$.
\end{lemma}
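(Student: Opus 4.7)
The plan is to reduce the claim to checking the statement on the degree-$2$ generators. Because $\opH^{2\bullet}(\uzb,k) \cong S^\bullet(\fraku^*)^{[1]}$ and $\opH^{2\bullet}(\uz(\frakb'),k) \cong S^\bullet(\fraku'^*)^{[1]}$ are polynomial algebras generated in cohomological degree $2$ (Theorem \ref{theorem:uzbalgebrastructure}), and since the restriction map is a homomorphism of graded algebras, it suffices to identify the restriction $\opH^2(\uzb,k) \to \opH^2(\uz(\frakb'),k)$ with the linear dual of the inclusion $\fraku' \hookrightarrow \fraku$; the symmetric algebra structure then forces the claim in every degree.

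The next step is to reduce from $\uzb$ to $\uzu$. Since $\uzo$ is the group algebra of $(\Z/\ell\Z)^n$, with $\ell$ coprime to $\chr(k)$, it is semisimple, so the LHS spectral sequence of Theorem \ref{theorem:LHSspectralsequence} applied to the normal inclusion $\uzu \subset \uzb$ (with quotient $\uzo$) collapses at $E_2$ and yields a natural identification $\opH^\bullet(\uzb,k) \cong \opH^\bullet(\uzu,k)^{\uzo}$, and similarly for $\uz(\frakb') \supset \uz(\fraku')$. By naturality of the LHS construction with respect to the map of pairs $(\uz(\fraku'), \uz(\frakb')) \to (\uzu, \uzb)$, the restriction map of the lemma fits into a commutative square with the pullback $\opH^2(\uzu,k)^{\uzo} \to \opH^2(\uz(\fraku'),k)^{\uzo}$, so it is enough to analyze the latter.

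I would then identify explicit generators of $\opH^2(\uzu,k)^{\uzo}$ indexed by positive roots. Via the Koszul-type computation underlying Theorem \ref{theorem:uzbalgebrastructure} (compare \cite[\S 4.2]{Drupieski:2009} and \cite[\S 2.5]{Ginzburg:1993}), and using the PBW decomposition \eqref{eq:Fdivpowerbasis}, a basis of $\opH^2(\uzu,k)^{\uzo}$ is given by classes $\{\varphi_\alpha : \alpha \in \Phi^+\}$, where $\varphi_\alpha$ is obtained by inflating the standard degree-$2$ generator of the cohomology of the cyclic subquotient $k[F_\alpha]/(F_\alpha^\ell)$ along the projection from $\uzu$ onto this rank-one factor. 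Under the identification $\opH^2(\uzu,k)^{\uzo} \cong (\fraku^*)^{[1]}$ of Theorem \ref{theorem:uzbalgebrastructure}, the class $\varphi_\alpha$ corresponds to the linear functional dual to the root vector $F_\alpha$ (up to a nonzero scalar that can be absorbed into the choice of normalization).

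Finally, the inclusion $\uz(\fraku') \hookrightarrow \uzu$ respects the PBW decomposition in the obvious way: it is the identity on the rank-one factor for $\alpha \in \Phi'^+$ and meets the factor for $\alpha \in \Phi^+ \setminus \Phi'^+$ trivially. Consequently the pullback of $\varphi_\alpha$ is $\varphi_\alpha$ when $\alpha \in \Phi'^+$, and is zero otherwise. Under the identifications with $\fraku^*$ and $\fraku'^*$, this is precisely the map dual to the inclusion $\fraku' \hookrightarrow \fraku$, i.e.\ the restriction of linear functions, which extends to restriction of polynomial functions on the symmetric algebras. The main obstacle I anticipate is producing cocycle representatives for the $\varphi_\alpha$ that are simultaneously explicit enough to track through the restriction map and manifestly compatible with the canonical identifications of Theorem \ref{theorem:uzbalgebrastructure}; with such representatives in hand, the remaining bookkeeping is routine.
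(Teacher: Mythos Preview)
The paper does not actually prove this lemma; it is stated with a citation to \cite[Proposition 5.7]{Drupieski:2009} (the author's thesis) and no argument is given here. So there is no ``paper's proof'' to compare against, and your proposal should be judged on its own merits.

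Your outline is sound and is essentially what one would expect the thesis proof to look like: reduce to degree $2$ by multiplicativity, pass from $\uzb$ to $\uzu$ via the semisimplicity of $\uzo$, and then track the root-graded generators through the restriction. The one point that needs more care than you indicate is the assertion that ``the inclusion $\uz(\fraku') \hookrightarrow \uzu$ respects the PBW decomposition in the obvious way.'' The PBW root vectors $F_\gamma$ for non-simple $\gamma$ depend on the chosen reduced expression for $w_0$ (Section \ref{subsection:braidgroupauto}), and the PBW bases for $\uz(\fraku')$ and $\uzu$ are a priori built from different reduced expressions. To make your step work you should choose a reduced expression for $w_0 \in W$ that begins with a reduced expression for the longest element $w_0'$ of the parabolic subgroup $W' = W_J$; this is always possible, and with that choice the PBW vectors for $\Phi'^+$ in the two algebras agree, so the cocycles $\varphi_\alpha$ inflated from the rank-one factors are genuinely compatible. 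Once that compatibility is in place, your final paragraph goes through and the ``obstacle'' you flag is resolved.
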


Let $G'$ be the simple, simply-connected algebraic group over $k$ with $\g' = \Lie(G')$, and let $B' \subset G'$ be the Borel subgroup of $G'$ with $\frakb' = \Lie(B')$. Write $\calN'$ for the variety of nilpotent elements in $\g'$.

\begin{lemma} \label{lemma:uzgrestriction}
Suppose $\ell > h$ and that $k$ is algebraically closed of characteristic good for both $\g$ and $\g'$. Then under the identifications $\opH^{2\bullet}(\uzg,k) \cong k[\calN]$ and $\opH^{2\bullet}(\uz(\g'),k) \cong k[\calN']$ of Theorem \ref{theorem:uzgcohomology}, the restriction homomorphism $\opH^{2\bullet}(\uzg,k) \rightarrow \opH^{2\bullet}(\uz(\g'),k)$ is just the restriction of functions $k[\calN] \rightarrow k[\calN']$.
\end{lemma}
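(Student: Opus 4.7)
The plan is to deduce the result from the Borel case (Lemma \ref{lemma:uzbrestriction}) by setting up a naturality square of restriction maps, and then rigidifying the answer via $G'$-equivariance together with the geometric fact that $\calN' = G' \cdot \fraku'$.

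By functoriality of cohomological restriction, the chain of inclusions $\uz(\frakb') \subset \uzb \subset \uzg$ and $\uz(\frakb') \subset \uz(\g') \subset \uzg$ produces a commutative square of $k$-algebra homomorphisms
\[
\begin{array}{ccc}
\opH^{2\bullet}(\uzg, k) & \longrightarrow & \opH^{2\bullet}(\uzb, k) \\
\downarrow & & \downarrow \\
\opH^{2\bullet}(\uz(\g'), k) & \longrightarrow & \opH^{2\bullet}(\uz(\frakb'), k)
\end{array}
\]
in which every arrow is a cohomological restriction. By Lemma \ref{lemma:uzbrestriction}, the right vertical arrow identifies with the restriction of polynomial functions $k[\fraku] \to k[\fraku']$. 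Unwinding the proof of Theorem \ref{theorem:uzgcohomology} --- namely the edge-map isomorphism $\opH^{2\bullet}(\uzg,k) \cong \ind_B^G S^\bullet(\fraku^*)$ from the collapse of \eqref{eq:inductionspectralsequence}, combined with the Frobenius-reciprocity isomorphism $k[\calN] \cong \ind_B^G S^\bullet(\fraku^*)$ --- one checks that the top horizontal arrow corresponds to the evaluation $\ind_B^G S^\bullet(\fraku^*) \to S^\bullet(\fraku^*)$ at the identity coset, hence to the surjection $k[\calN] \twoheadrightarrow k[\fraku]$ given by restriction of polynomial functions. The analogous statement identifies the bottom horizontal arrow.

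Write $\phi : k[\calN] \to k[\calN']$ for the left vertical map and $\psi : k[\calN] \to k[\calN']$ for the pullback of functions along the inclusion $\calN' \hookrightarrow \calN$. Both are $G'$-equivariant $k$-algebra homomorphisms, where $G' \subset G$ acts on $k[\calN]$ by restriction of the $G$-action. Because $k[\calN]$ is generated as a $k$-algebra by $\g^* \cong \opH^2(\uzg,k)$, it suffices to show $\phi|_{\g^*} = \psi|_{\g^*}$. For $f \in \g^*$, commutativity of the square combined with the identification of the horizontal arrows gives
\[
\phi(f)|_{\fraku'} \;=\; f|_{\fraku'} \;=\; \psi(f)|_{\fraku'} \quad \text{in} \quad \fraku'^* \cong \opH^2(\uz(\frakb'),k).
\]
Using $G'$-equivariance, $\phi(f)(g \cdot x) = \phi(g^{-1} \cdot f)(x) = \psi(g^{-1} \cdot f)(x) = \psi(f)(g \cdot x)$ for all $g \in G'$ and $x \in \fraku'$. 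Thus $\phi(f)$ and $\psi(f)$ agree pointwise on $G' \cdot \fraku' = \calN'$. Since $\g'$ is simple, the linear span of $\calN'$ inside $\g'$ is a nonzero $\ad$-stable ideal and so equals $\g'$, forcing $\phi(f) = \psi(f)$ as elements of $\g'^*$.

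The main technical obstacle is verifying that the top (and, symmetrically, the bottom) horizontal arrow really does identify with restriction of functions to $\fraku$ (resp.\ $\fraku'$) --- that is, that under the spectral-sequence collapse and the Frobenius-reciprocity identifications of Theorem \ref{theorem:uzgcohomology}, the subalgebra restriction map becomes evaluation at the identity coset in the induction picture. Once that compatibility is in hand, the remainder of the argument is a formal equivariance computation supported by the geometric fact that $\calN'$ is the $G'$-saturation of $\fraku'$ and spans $\g'$ linearly.
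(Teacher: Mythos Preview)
Your proposal is correct and follows essentially the same strategy as the paper: reduce to the Borel case via Lemma~\ref{lemma:uzbrestriction}, then use $G'$-equivariance to pin down the map. The paper packages this slightly differently: rather than your commutative square of subalgebra restrictions, it builds natural transformations $\eta_1:\calF_1\to\calF_1'$ and $\eta_2:\calF_2\to\calF_2'$ between the composite functors of \eqref{eq:equivalentfunctors} and their primed analogues, checks $\theta'\circ\eta_1=\eta_2\circ\theta$, and then invokes the universal property of induction (Frobenius reciprocity) to conclude that two $G'$-maps into $\ind_{B'}^{G'}S^\bullet(\fraku'^*)$ coincide once their compositions with evaluation agree. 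Your endgame replaces that Frobenius-reciprocity step with the equivalent geometric statement $\calN'=G'\cdot\fraku'$ together with the observation that $\calN'$ spans $\g'$; this is a pleasant and slightly more concrete variant. The one place the paper is more explicit than you are is exactly the point you flag as the ``main technical obstacle'': the functorial framework $\eta_1,\eta_2,\theta,\theta'$ is what rigorously establishes that the horizontal arrows in your square become evaluation under the identifications of Theorem~\ref{theorem:uzgcohomology}, and that the left vertical arrow is $G'$-equivariant.
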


\begin{proof}
Let $\calF_1',\calF_2'$ be the functors from the category of integrable $\Uz(B')$-modules to the category of rational $G'$-modules defined by substituting the symbols $\g',\frakb',B',G'$ for the symbols $\g,\frakb,B,G$ in \eqref{eq:equivalentfunctors}. Any integrable $\Uzb$-module is by restriction an integrable $\Uz(B')$-module. Now define natural transformations $\eta_1: \calF_1 \rightarrow \calF_1'$ and $\eta_2: \calF_2 \rightarrow \calF_2'$ as follows:
\begin{itemize}
\item The evaluation map $\varepsilon: \opH^0(\Uzg/\Uzb,M) \rightarrow M$ is a homomorphism of $U_\zeta(\frakb')$-modules. By Frobenius reciprocity, there exists a map
\[ \opH^0(\Uzg/\Uzb,M) \rightarrow \opH^0(\Uz(\g')/\Uz(\frakb'),M).
\]
Call this map $\ind(\varepsilon)$. Define $\eta_1$ to be the restriction of $\ind(\varepsilon)$ to the subspace $\calF_1(M) \subset \opH^0(\Uzg/\Uzb,M)$. Then $\eta_1$ has image in $\calF_1'(M)$.

\item The evaluation map $ \varepsilon: \calF_2(M) = \ind_B^G(M^{\uzb}) \rightarrow M^{\uzb} \subset M^{u_\zeta(\frakb')}$ is a $B'$-module homomorphism. By Frobenius reciprocity, there exists a corresponding $G'$-module homomorphism $\eta_2: \calF_2(M) \rightarrow \calF_2'(M)$.
\end{itemize}
Let $\theta: \calF_1 \stackrel{\sim}{\rightarrow} \calF_2$ be the natural equivalence arising from the fact that $\calF_1$ and $\calF_2$ are both right adjoint to the functor $(-)^{[1]}|_{\Uzb}$. Similarly, let $\theta': \calF_1' \stackrel{\sim}{\rightarrow} \calF_2'$ be the natural equivalence for $\calF_1'$ and $\calF_2'$. Then $\eta_1$ and $\eta_2$ commute with $\theta$ and $\theta'$, i.e., $\theta' \circ \eta_1 = \eta_2 \circ \theta$.

The natural transformations $\eta_1: \calF_1 \rightarrow \calF_1'$ and $\eta_2: \calF_2 \rightarrow \calF_2'$ induce morphisms of the higher derived functors, $\eta_1^\bullet: R^\bullet \calF_1 \rightarrow R^\bullet \calF_1'$ and $\eta_2^\bullet: R^\bullet \calF_2 \rightarrow R^\bullet \calF_2'$. One checks that under the identifications $(R^\bullet \calF_1)(k) \cong \opH^\bullet(\uzg,k)$ and $(R^\bullet \calF_1')(k) \cong \opH^\bullet(\uz(\g'),k)$, the morphism $\eta_1^\bullet: \opH^\bullet(\uzg,k) \rightarrow \opH^\bullet(\uz(\g'),k)$ is the restriction homomorphism, and under the identifications $(R^\bullet \calF_2)(k) \cong \ind_B^G \opH^\bullet(\uzb,k)$ and $(R^\bullet \calF_2')(k) \cong \ind_{B'}^{G'} \opH^\bullet(\uz(\frakb'),k)$, the homomorphism $\eta_2^\bullet: \ind_B^G \opH^\bullet(\uzb,k) \rightarrow \ind_{B'}^{G'} \opH^\bullet(\uz(\frakb'),k)$ is the $G'$-module homomorphism induced by Frobenius reciprocity from the restriction map $\opH^\bullet(\uzb,k) \rightarrow \opH^\bullet(\uz(\frakb'),k)$.

By Lemma \ref{lemma:uzbrestriction}, the restriction map $\opH^{2\bullet}(\uzb,k) \rightarrow \opH^{2\bullet}(\uz(\frakb'),k)$ identifies with the restriction of functions $S^\bullet(\fraku^*) \rightarrow S^\bullet(\fraku'^*)$. Then by Frobenius reciprocity, the maps
\[
f: k[\calN] \stackrel{\sim}{\rightarrow} \ind_B^G \opH^\bullet(\uzb,k) \stackrel{\eta_2^\bullet}{\rightarrow} \ind_{B'}^{G'} \opH^\bullet(\uz(\frakb'),k)
\]
and
\[
g: k[\calN] \stackrel{\res}{\rightarrow} k[\calN'] \stackrel{\sim}{\rightarrow} \ind_{B'}^{G'} \opH^\bullet(\uz(\frakb'),k)
\]
must be the same, because they are both $G'$-module homomorphisms whose compositions with the evaluation map $\ind_{B'}^{G'} \opH^\bullet(\uz(\frakb'),k) \rightarrow \opH^\bullet(\uz(\frakb'),k) \cong S^\bullet(\fraku'^*)$ are the restriction of functions from $\calN$ to $\fraku'$.
\end{proof}

\subsection{Cohomology of the first Frobenius--Lusztig kernel}

Friedlander and Parshall \cite{Friedlander:1986} were able to prove the finite-generation of the cohomology ring $\opH^\bullet(G_2,k)$ for the second Frobenius kernel of $G$ by studying the Lyndon--Hochschild--Serre spectral sequence
\[
E_2^{i,j} = \opH^i(G_2/G_1,\opH^j(G_1,k)) \cong \opH^i(G_1,\opH^j(G_1,k)^{(-1)}) \Rightarrow \opH^{i+j}(G_2,k).
\]
We now imitate their approach to study the cohomology ring for the first Frobenius--Lusztig kernel $\Uz(G_1)$ of $\Uz$. Throughout this section, assume $k$ to be algebraically closed, with $p = \chr(k)$ odd and very good for $G$ (i.e., $p$ is good for $G$, and $p \nmid n+1$ if $\Phi$ has type $A_n$). Also assume $\ell > h$, so that the algebra isomorphism $\opH^{2\bullet}(\uzg,k) \cong \ind_B^G S^\bullet(\fraku^*)$ of Theorem \ref{theorem:uzgcohomology} holds.

\begin{lemma} \label{lemma:kernelfreeoveranother}
Fix integers $0 \leq r \leq s \leq \infty$. If $s = \infty$, set $\Uz(G_s) = \Uz$. Then $\Uz(G_s)$ is free (in particular, flat) for both the left and right regular actions of $\Uzgr$ on $\Uz(G_s)$.
\end{lemma}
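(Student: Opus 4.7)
The plan is to prove freeness by exhibiting an explicit module basis constructed from the divided-power PBW decomposition. First I would reduce to the three triangular factors using the vector-space isomorphisms from Section~\ref{subsection:subalgebras}:
$$
\Uz(G_s) \;\cong\; \Uz(U_s^-)\otimes \Uz(T_s)\otimes \Uz(U_s^+), \qquad
\Uzgr \;\cong\; \Uzur\otimes \Uztr\otimes U_\zeta(U_r^+),
$$
where for $s=\infty$ one takes $\Uz^-$, $\Uz^0$, $\Uz^+$ in place of the $s$-indexed pieces. Since multiplication identifies each tensor decomposition with the whole algebra, it suffices to show that each of $\Uz(U_s^\pm)$ and $\Uz(T_s)$ is free as a left (and as a right) module over the corresponding Frobenius--Lusztig piece.

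For the positive unipotent factor, the divided-power PBW basis~\eqref{eq:Edivpowerbasis} gives $\Uz(U_s^+)$ the $k$-basis $\{E^{(\bfs)}:\bfs\in\N^N,\, 0\leq s_i<p^s\ell\}$, and similarly for $U_\zeta(U_r^+)$ with $0\leq a_i<p^r\ell$. For each $\bfs$ I write uniquely $\bfs=\bfa+p^r\ell\bfb$ with $0\leq a_i<p^r\ell$ and $0\leq b_i<p^{s-r}$ (no upper bound on $b_i$ when $s=\infty$), and I claim that
$$
\mathcal{B} \;=\; \{\,E^{(p^r\ell\bfb)} = E_{\gamma_1}^{(p^r\ell b_1)}\cdots E_{\gamma_N}^{(p^r\ell b_N)}\,\}
$$
is a free left $U_\zeta(U_r^+)$-basis for $\Uz(U_s^+)$. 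To verify this I would analyze the product $E^{(\bfa)}\cdot E^{(p^r\ell\bfb)}$ using three ingredients: (i)~the divided-power multiplication law $E_\gamma^{(m)}E_\gamma^{(n)}=\qbinom{m+n}{m}_\gamma E_\gamma^{(m+n)}$; (ii)~Levendorskii--Soibelman-type straightening relations expressing $E_\gamma^{(m)}E_{\gamma'}^{(n)}$ (with $\gamma$ preceding $\gamma'$ in the chosen PBW order) as a PBW-ordered sum whose leading term is $E_{\gamma'}^{(n)}E_\gamma^{(m)}$; and (iii)~a quantum Lucas identity ensuring that $\qbinom{a+p^r\ell b}{a}_\zeta\in k^\times$ for all $0\leq a<p^r\ell$ and $b\geq 0$. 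By (iii) the leading PBW term of $E^{(\bfa)}\cdot E^{(p^r\ell\bfb)}$ is a unit scalar multiple of $E^{(\bfs)}$, so an induction on the PBW filtration shows $\mathcal{B}$ is a free left $U_\zeta(U_r^+)$-basis of $\Uz(U_s^+)$. The right freeness follows by running the same argument with the PBW order reversed, and the negative unipotent case then follows by applying the involution $\omega$ of Section~\ref{subsection:QEAs}.

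For the toral factor, Lusztig's integral basis for $\Ua^0$~\cite[Theorem~6.7]{Lusztig:1990a} specializes to a $k$-basis of $\Uz(T_s)$ that factors as the product of a $\Uztr$-basis and basis elements coming from the higher divided powers $\binom{K_\alpha;\,0}{p^i\ell}$ with $r\leq i<s$; commutativity of $\Uz^0$ makes the left and right freeness coincide.

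The main obstacle is verifying the Lucas-type identity in ingredient~(iii) in this mixed setting (roots of unity in positive characteristic). In characteristic zero this is classical, but in characteristic $p>0$ one must also rule out any $p$-divisibility obstruction. Since $\gcd(p,\ell)=1$ and $\zeta$ has order $\ell$, evaluation of $\qbinom{a+p^r\ell b}{a}_{q_\gamma}$ at $q=\zeta$ factors, via the standard base-$\ell$ decomposition of quantum binomials, into an ordinary binomial $\binom{b}{0}=1$ times a unit coming from $\qbinom{a}{a}_\zeta=1$, so no obstruction appears and the plan goes through uniformly in $s\in\{r,r+1,\ldots,\infty\}$.
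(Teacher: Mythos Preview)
Your approach is fundamentally different from the paper's and, as written, has a real gap in the very first step.

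\textbf{The gap.} You assert that because multiplication gives vector-space isomorphisms
\[
\Uz(G_s)\cong \Uz(U_s^-)\otimes\Uz(T_s)\otimes\Uz(U_s^+),\qquad
\Uzgr\cong \Uzur\otimes\Uztr\otimes U_\zeta(U_r^+),
\]
it suffices to prove that each triangular factor of $\Uz(G_s)$ is free over the corresponding factor of $\Uzgr$. This reduction is not justified. Freeness of $A^\epsilon$ over $B^\epsilon$ for $\epsilon\in\{-,0,+\}$ gives you $k$-bases of the form $\{b^-_i m^-_j\,b^0_k m^0_l\,b^+_p m^+_q\}$ for $\Uz(G_s)$, but what you need for left $\Uzgr$-freeness is that $\{b^-_i b^0_k b^+_p\cdot m^-_j m^0_l m^+_q\}$ be a $k$-basis. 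These two families differ by reordering noncommuting elements: for instance, $b^+_p\in U_\zeta(U_r^+)$ must be moved past $m^-_j=F^{(p^r\ell\bfb)}$, which produces a cascade of correction terms across all three triangular pieces. Nothing in your argument controls this. Your PBW/straightening argument for the \emph{individual} unipotent factors looks fine, and the Lucas-type unit check is correct, but those results do not combine to give the conclusion without substantial additional work (for example, passing to the associated graded algebra of Lemma~\ref{lemma:grUzgr}, where the positive and negative parts genuinely tensor-commute, and then lifting freeness back through the filtration).

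\textbf{Comparison with the paper.} The paper avoids this difficulty entirely by using a representation-theoretic criterion of Koppinen: to prove left freeness of $\Uz(G_s)$ over $\Uzgr$ it suffices to lift the left regular representation of $\Uzgr$ to $\Uz(G_s)$. The paper does this by showing $\Uzgr\cong \coind_{\Uzur}^{\Uzgr}(k)\otimes\Stprl$ as a left $\Uzgr$-module (via a tensor identity), then using projectivity of the Steinberg module $\Stprl$ (Corollary~\ref{corollary:Steinbergprojective}) to split this into simples tensored with $\Stprl$, each of which lifts to $\Uz$ by Theorem~\ref{theorem:simplerestrictsimple}. Right freeness then follows by applying the antipode. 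This argument never touches the combinatorics of commuting PBW monomials across the triangular decomposition, which is exactly the obstacle your approach runs into.
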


\begin{proof}
To prove that $\Uz(G_s)$ is free as a left $\Uzgr$-module, it suffices by \cite[Corollary 1.7]{Koppinen:1983} to show that the left regular representation for $\Uzgr$ lifts to $\Uz(G_s)$. For this we follow the strategy in the proof of \cite[Theorem 5.1(i)]{Koppinen:1983}. Then the freeness of $\Uz(G_s)$ as a right $\Uzgr$-module follows by applying the antipode $S$ of $\Uz$.

By \eqref{eq:SteinbergZiso} and \cite[Corollary 3.5]{Drupieski:2011a}, the $p^r\ell$-th Steinberg module $\Stprl$ is free as a left $\Uzur$-module. Since $\Uzur$ is a subalgebra of the Hopf algebra $\Uzgr$, there exists by restriction a $\Uzur$-module isomorphism $\Stprl \cong k \otimes \Stprl$. Write $\coind(-) = \Uzgr \otimes_{\Uzur}(-)$. We claim that $\Uzgr \cong \coind(k \otimes \Stprl) \cong \coind(k) \otimes \Stprl$ as a left $\Uzgr$-module. If $\Uzur$ were a Hopf subalgebra of $\Uzgr$, then this would follow from the usual tensor identity for the tensor induction functor. Nevertheless, the usual maps giving the inverse isomorphisms of the tensor identity yield in this case an isomorphism $\coind(k \otimes \Stprl) \cong \coind(k) \otimes \Stprl$. Specifically, the following linear maps are well-defined $\Uzgr$-homomorphisms:
\begin{align*}
\varphi &: \coind(k \otimes \Stprl) \rightarrow \coind(k) \otimes \Stprl & h \otimes (v \otimes w) \mapsto \sum (h_{(1)} \otimes v) \otimes h_{(2)}w \\
\psi &: \coind(k) \otimes \Stprl \rightarrow \coind(k \otimes \Stprl) & (h \otimes v) \otimes w \mapsto \sum h_{(1)} \otimes (v \otimes S(h_{(2)}) w)
\end{align*}
Here $h \in \Uzgr$, $v \in k$, and $w \in \Stprl$. The well-definedness here of $\varphi$ and $\psi$ is dependent on the fact that the first factor in $k \otimes \Stprl$ is the trivial module.

So now $\Uzgr \cong \coind(k) \otimes \Stprl$ as a left $\Uzgr$-module. Let $L_1,\ldots,L_r$ be the $\Uzgr$-composition factors for $\coind(k)$. Then the left regular representation of $\Uzgr$ admits a filtration with quotients $L_1 \otimes \Stprl, \ldots, L_r \otimes \Stprl$. Since $\St_\ell$ is projective for $\Uzgr$ by Corollary \ref{corollary:Steinbergprojective}, so is each $L_i \otimes \Stprl$, hence there exists an isomorphism of left $\Uzgr$-modules $\Uzgr \cong (L_1 \oplus \cdots \oplus L_r) \otimes \Stprl$. Since each $L_i$ can be lifted to a simple $\Uz$-module by Theorem \ref{theorem:simplerestrictsimple}, $L_1 \oplus \cdots \oplus L_r \cong V|_{\Uzgr}$ for some completely reducible $\Uz$-module $V$. Then as a left $\Uzgr$-module by , $\Uzgr \cong (V \otimes \Stprl)|_{\Uzgr}$, the restriction to $\Uzgr$ of a $\Uz$-module. In particular, the left regular representation of $\Uzgr$ lifts to $\Uz(G_s)$.
\end{proof}

By Lemma \ref{lemma:kernelfreeoveranother} and Theorem \ref{theorem:HmoduleLHSspecseq}, there exists a spectral sequence of $\Uz$-modules satisfying
\begin{equation} \label{eq:secondkernelspecseq}
E_2^{i,j}(\g) = \opH^i(\Uz(G_1)//\uzg,\opH^j(\uzg,k)) \Rightarrow \opH^{i+j}(\Uz(G_1),k).
\end{equation}
Applying the isomorphism $U_\zeta(G_1)//\uzg \cong \Dist(G_1)$ and the results of Section \ref{subsection:uzgcohomology}, we rewrite \eqref{eq:secondkernelspecseq} as
\begin{equation} \label{eq:G1specseq}
E_2^{i,j}(\g) = \opH^i(G_1,\ind_B^G S^{j/2}(\fraku^*)) \Rightarrow \opH^{i+j}(\Uz(G_1),k).
\end{equation}
In particular, $E_2^{i,j}(\g) = 0$ unless $j$ is even.

Let $\nu$ denote the highest root in $\Phi$. If $\Phi$ has only one root length, then $\nu$ is the minimal element among the non-zero dominant weights lying in the root lattice.

\begin{lemma} \label{lemma:fp15} \textup{\cite[Lemma 1.5]{Friedlander:1986}}
Suppose $\Phi$ has rank $n$. Let $w \in W$ be such that $-w \cdot 0 = \rho - w \rho \geq s \nu$ for some positive integer $s$. Then $\ell(w) \geq n+s-1$.
\end{lemma}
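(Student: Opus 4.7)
The plan is to convert the hypothesis on $-w\cdot 0 = \rho - w\rho$ into a statement about the inversion set $N(w) = \Phi^+ \cap w\Phi^-$ via the classical identity
\[
\rho - w\rho = \sum_{\gamma \in N(w)} \gamma, \qquad |N(w)| = \ell(w).
\]
So the task reduces to showing: if the sum on the right dominates $s\nu$, then it has at least $n+s-1$ summands. I would proceed by induction on $s$, using a careful choice of reduced expression for $w$.

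For the base case $s=1$, fix any reduced expression $w = s_{i_1} s_{i_2} \cdots s_{i_\ell}$, and let $J = \{i_1,\ldots,i_\ell\} \subseteq \Pi$. The standard description $N(w) = \{s_{i_1} \cdots s_{i_{k-1}}(\alpha_{i_k}) : 1 \leq k \leq \ell\}$ shows that every element of $N(w)$ is a non-negative integer combination of simple roots indexed by $J$ (each reflection $s_{i_j}$ acts on the $\Z$-span of $J$). Hence $\rho - w\rho$ is supported on $J$. But $\nu$, being the highest root of an irreducible root system, has strictly positive coefficient on every simple root; comparing supports in the inequality $\rho - w\rho \geq \nu$ forces $J = \Pi$, giving $\ell(w) \geq n$.

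For the inductive step $s \geq 2$, let $w' = s_{i_1}\cdots s_{i_{\ell-1}}$ (so $\ell(w') = \ell(w)-1$) and note that $w\rho = w'(\rho - \alpha_{i_\ell})$, so
\[
\rho - w'\rho = (\rho - w\rho) - w'(\alpha_{i_\ell}).
\]
The root $w'(\alpha_{i_\ell})$ is the last element of $N(w)$ in the convex order, and in particular is a positive root, hence $\leq \nu$. Therefore $\rho - w'\rho \geq s\nu - \nu = (s-1)\nu$, and the inductive hypothesis applied to $w'$ yields $\ell(w') \geq n + (s-1) - 1$, whence $\ell(w) \geq n+s-1$.

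The only non-routine ingredient is the support argument in the base case, which relies on the classical fact that the highest root of any irreducible root system has every simple root appearing with strictly positive coefficient; once that is in hand, the inductive step is essentially automatic because $\nu$ is maximal in the dominance order on $\Phi^+$.
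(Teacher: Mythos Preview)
Your proof is correct. The paper does not actually prove this lemma; it is simply quoted verbatim from \cite[Lemma~1.5]{Friedlander:1986} with no argument given, so there is no in-paper proof to compare against. Your argument---reducing to the inversion-set identity $\rho - w\rho = \sum_{\gamma \in N(w)} \gamma$, handling $s=1$ by the support argument (which forces every simple reflection to appear in any reduced expression for $w$), and then inducting by peeling off the last simple reflection and using that $w'(\alpha_{i_\ell}) \leq \nu$---is clean and essentially the standard proof one would expect to find in the cited reference.

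One small remark: in the inductive step you implicitly use that $\nu$ is the highest root (so that every positive root is $\leq \nu$ in the root order), which is exactly how the paper defines $\nu$ just before the lemma. Nothing is missing.
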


\begin{proposition} \label{proposition:vanishingrange} In the spectral sequence \eqref{eq:G1specseq}, suppose $E_2^{i,j}(\g) \neq 0$ with $i+j=2p+1$. Write $j=2p-2s$ for some $0 \leq s \leq p$.
\begin{enumerate}
\item If $\Phi$ is of type $A_n$, then $n-2 \leq s \leq n$.
\item If $\Phi$ is of type $D_n$, then $n-2 \leq s \leq 2(n-1)$.
\end{enumerate}
\end{proposition}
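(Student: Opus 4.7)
Since $\opH^{\mathrm{odd}}(\uzg, k) = 0$ by Theorem \ref{theorem:uzgcohomology}, only even $j$ contribute in \eqref{eq:G1specseq}. Writing $j = 2(p - s)$ and $i = 2s+1$, the task is to show that nonvanishing of
\[
E_2^{2s+1, 2(p-s)}(\g) = \opH^{2s+1}\bigl(G_1, \ind_B^G S^{p-s}(\fraku^*)\bigr)
\]
forces the stated bounds on $s$. The plan is to adapt the approach of Friedlander--Parshall \cite[\S 1]{Friedlander:1986}, which established an analogous vanishing range for the Lyndon--Hochschild--Serre spectral sequence of $G_1 \subset G_2$.

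First I would reduce the $G_1$-cohomology to a $B_1$-computation by means of a Grothendieck-type spectral sequence of $G$-modules
\[
E_2^{a,b} = R^a \ind_B^G \opH^b\bigl(B_1, S^{p-s}(\fraku^*)\bigr) \Longrightarrow \opH^{a+b}\bigl(G_1, \ind_B^G S^{p-s}(\fraku^*)\bigr),
\]
arising from the natural isomorphism $(\ind_B^G V)^{G_1} \cong \ind_B^G(V^{B_1})$ on rational $B$-modules together with Kempf vanishing in good characteristic. The $B_1$-cohomology on the $E_2$-page is then computed from the decomposition $\opH^{2\bullet}(B_1, k) \cong S^\bullet(\fraku^*)^{[1]}$ (with $\opH^{\mathrm{odd}}(B_1, k) = 0$ for $p$ good), which shows that the weights occurring in $\opH^b\bigl(B_1, S^{p-s}(\fraku^*)\bigr)$ are of the shape $p\mu + \gamma$, where $\mu$ is a sum of $b/2$ positive roots coming from the Frobenius-twisted piece and $\gamma$ is a sum of $p - s$ positive roots coming from the coefficient module.

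By Kempf vanishing and Bott--Borel--Weil, a summand of weight $\lambda$ contributes to $R^a \ind_B^G$ only if there exists $w \in W$ with $\ell(w) = a$ and $w \cdot \lambda$ dominant. Applying this to the total-degree $2p+1$ term with $a = 2s+1$ and absorbing the $(\mu,\gamma)$ contributions into the $\rho$-shift produces the existence of $w \in W$ satisfying an inequality essentially of the form $-w \cdot 0 \geq s\nu$, to which Lemma \ref{lemma:fp15} applies and gives $\ell(w) \geq n + s - 1$. Combined with the available length budget, this provides the lower bound $s \geq n-2$ in both (a) and (b).

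The main obstacle is the type-specific combinatorics needed for the upper bounds, where the structure of the highest root $\nu$ plays a decisive role. In type $A_n$ one has $\nu = \varpi_1 + \varpi_n$ with all simple-root coefficients equal to $1$, and the balance between $\ell(w) = 2s+1$ and the requirement that $-w\cdot 0$ dominate $s\nu$ tightens $s$ to $s \leq n$. In type $D_n$ the highest root has coefficient $2$ on an interior simple root, loosening the analogous constraint to $s \leq 2(n-1) = h$, the Coxeter number. Performing this case-specific Weyl-length bookkeeping, and verifying that the decomposition of $\opH^b(B_1, S^{p-s}(\fraku^*))$ into $B$-module weight components $(\mu,\gamma)$ interacts with BBW exactly as expected, will be the technical heart of the argument.
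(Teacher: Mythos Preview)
Your approach is genuinely different from the paper's (and, despite your citation, from Friedlander--Parshall's as well), and it has a real gap at the Bott--Borel--Weil step.

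The paper does not pass through $B_1$-cohomology. Instead it uses that $\ind_B^G S^{p-s}(\fraku^*)$ has a \emph{good filtration} when $p$ is good (Kumar--Lauritzen--Thomsen), so $E_2^{i,j}(\g)=\opH^i(G_1,\ind_B^G S^{p-s}(\fraku^*))\neq 0$ forces $\opH^i(G_1,\opH^0(\mu))\neq 0$ for some dominant weight $\mu$ of $\ind_B^G S^{p-s}(\fraku^*)$. Then the Andersen--Jantzen structure result (Cor.~5.5 of \cite{Andersen:1984}, valid for $p$ good via \cite{Kumar:1999}) gives $\mu=w\cdot 0+p\lambda$ with $\lambda\in X^+$ and $\ell(w)\le i$. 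From $\mu\le(p-s)\nu$ (using $k[\calN]\twoheadleftarrow S^\bullet(\g^*)$) and the fact that $p$ very good forces $\lambda$ into the root lattice (so $\lambda\ge\nu$), one gets $-w\cdot 0\ge s\nu$ directly, and Lemma~\ref{lemma:fp15} then yields $i\ge n+s-1$, hence $s\ge n-2$.

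Your route would require that $R^a\ind_B^G(\lambda)\neq 0$ implies the existence of $w\in W$ with $\ell(w)=a$ and $w\cdot\lambda$ dominant. This is Bott's theorem in characteristic zero, but it is \emph{false} in characteristic $p$: higher derived induction can be nonzero in degrees not predicted by the length function. Without this, you cannot extract the inequality $\ell(w)\ge n+s-1$ from your spectral sequence. A secondary issue is that the identification $\opH^{2\bullet}(B_1,k)\cong S^\bullet(\fraku^*)^{[1]}$ with odd vanishing is known for $p>h$ but is not automatic for $p$ merely good; the paper's good-filtration argument sidesteps this entirely.

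Finally, the upper bounds are much simpler than you suggest: once $-w\cdot 0\ge s\nu$ is established, compare $\alpha_1$-coefficients. Since $\rho-w\rho$ is a sum of distinct positive roots and there are exactly $n$ (type $A_n$) respectively $2(n-1)$ (type $D_n$) positive roots with $\alpha_1$ in their support, each with coefficient $1$, one immediately gets $s\le n$ respectively $s\le 2(n-1)$. No Weyl-length bookkeeping is needed here.
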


\begin{proof}
The proof here follows exactly the strategy of \cite[Proposition 1.6]{Friedlander:1986}. We provide the details in order to show that the argument extends to good characteristics. (The original result is proven under the assumption $p>h$.) Set $n = \rank(\Phi)$, and write $\Pi = \set{\alpha_1,\ldots,\alpha_n}$, with the simple roots ordered as in \cite{Humphreys:1978}.

Since $p$ is good for $G$, the rational $G$-module $\ind_B^G S^{j/2}(\fraku^*)$ admits a good filtration \cite[II.12.12]{Jantzen:2003}. The non-vanishing of $E_2^{i,j}(\g)$ then implies that there exists a weight $\mu \in X^+$ such that $\mu$ is a weight of $\ind_B^G S^{j/2}(\fraku^*)$, and such that $\opH^i(G_1,\opH^0(\mu)) \neq 0$. If $j>0$, then $\mu = w \cdot 0 + p\lambda \neq 0$ for some $\lambda \in X^+$ and some $w \in W$ with $\ell(w) \leq i$. Indeed, the proof of \cite[Corollary 5.5]{Andersen:1984} (which establishes the given form for $\mu$ in the classical $p> h$ case) remains valid for $p$ good if we apply the stronger form of \cite[Proposition 5.4]{Andersen:1984} proved in \cite[Theorem 2]{Kumar:1999}.

It follows from the isomorphism $k[\calN] \cong \ind_B^G S^\bullet(\fraku^*)$ that any weight $\mu$ of $\ind_B^G S^{p-s}(\fraku^*)$ must satisfy $\mu \leq (p-s)\nu$, because $k[\calN]$ is a quotient of $S^\bullet(\g^*)$, and weights of $S^{p-s}(\g^*)$ must be less than or equal to $(p-s)\nu$. Since $p$ is very good for $G$ (by assumption), we have $p \nmid [X:\Z\Phi]$, hence $\lambda = (\mu-w\cdot 0)/p \in X^+$ must belong to the root lattice. This implies that $\lambda \geq \nu$ by the comment immediately preceding Lemma \ref{lemma:fp15}. Now we get $-w \cdot 0 = p\lambda - \mu \geq s\nu + p(\lambda-\nu) \geq s\nu$, so Lemma \ref{lemma:fp15} implies that $i \geq \ell(w) \geq n+s-1$. The inequality $-w \cdot 0 \geq s\nu$ also implies $n \geq s$ (resp.\ $2(n-1) \geq s$) if $\Phi$ has type $A_n$ (resp.\ type $D_n$), because there are only $n$ (resp.\ $2(n-1)$) roots greater than or equal to $\alpha_1$ in $\Phi$. Since $i = 2s+1$, this proves the proposition.
\end{proof}

We can now prove the main theorem of this section.

\begin{theorem} Assume $k$ to be algebraically closed with $p=\chr(k)$ odd and very good for $G$. Assume also that $\ell$ is odd, $\ell > h$, and that one of the following conditions is satisfied:
\begin{enumerate}
\item $\Phi$ is either of type $A_n$ or of type $D_n$, and $n > p+2$,
\item $\Phi$ is of type $A_n$ and $\ell \geq n+4 = h+3$, or
\item $\Phi$ is of type $D_n$ and $\ell \geq 4n = 2h+4$.
\end{enumerate}
Then for any finite-dimensional $U_\zeta(G_1)$-module $M$, $\opH^\bullet(U_\zeta(G_1),M)$ is a finite module for the Noetherian algebra $\opH^\bullet(U_\zeta(G_1),k)$.
\end{theorem}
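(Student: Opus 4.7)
The plan is to imitate the inductive strategy of Friedlander--Parshall \cite{Friedlander:1986} by analyzing the Lyndon--Hochschild--Serre spectral sequence \eqref{eq:G1specseq},
\[
E_2^{i,j}(\g) = \opH^i(G_1, \ind_B^G S^{j/2}(\fraku^*)) \Rightarrow \opH^{i+j}(\Uz(G_1),k),
\]
which is a spectral sequence of $G$-module algebras by Theorems \ref{theorem:HmoduleLHSspecseq} and \ref{theorem:LHSspecseqofalgebras}. Since $E_2^{i,j}(\g) = 0$ when $j$ is odd, only the differentials $d_r$ with $r$ odd can be nonzero. The bottom row $E_2^{\bullet,0}(\g) \cong \opH^\bullet(G_1,k)$ consists of permanent cycles and is Noetherian by the classical theorem of Friedlander--Parshall, so the task reduces to showing that enough of the left column $E_2^{0,2\bullet}(\g) \cong \ind_B^G S^\bullet(\fraku^*) \cong k[\calN]$ survives to $E_\infty$.

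This is exactly what Proposition \ref{proposition:vanishingrange} is designed to accomplish. A differential $d_r \colon E_r^{0,2p} \rightarrow E_r^{r, 2p-r+1}$ lands in total degree $2p+1$, and its range is constrained by $3 \leq r \leq 2p+1$. Under condition (1), the proposition forces every nonzero entry in total degree $2p+1$ to satisfy $i \geq 2(n-2)+1 = 2n-3$; since $n > p+2$ gives $2n-3 > 2p+1$, no target can be nonzero, so $E_2^{0,2p}(\g) \subseteq E_\infty^{0,2p}(\g)$. Under conditions (2) and (3), the same calculation is applied after replacing $p$ by a suitably chosen exponent (coming from an iterated Frobenius on the coefficient algebra $k[\calN]$) whose size is controlled by $\ell$; the extra room afforded by $\ell \geq h+3$ (resp.\ $\ell \geq 2h+4$) is what is needed to bring the degrees of a generating set of $k[\calN]$ into the range where Proposition \ref{proposition:vanishingrange} forces vanishing of all outgoing differentials.

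Granted that a homogeneous generating set of $k[\calN] \cong \ind_B^G S^\bullet(\fraku^*)$ lifts to permanent cycles, let $R \subseteq \opH^\bullet(\Uz(G_1),k)$ be the finitely generated subalgebra generated by these lifts together with the image of $\opH^\bullet(G_1,k)$ under the edge homomorphism associated with the bottom row. Since $E_2^{\bullet,\bullet}(\g) \cong \opH^\bullet(G_1,k) \otimes k[\calN]$ is Noetherian as a bigraded ring and $E_\infty^{\bullet,\bullet}(\g)$ is a subquotient generated (after passing to associated graded) by the images of $R$, a standard Nakayama-type argument applied to the LHS filtration shows that $\opH^\bullet(\Uz(G_1),k)$ is finite over $R$, hence itself Noetherian. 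For the module case, substitute $M$ for $k$ throughout to obtain a spectral sequence $E_2^{i,j}(M) = \opH^i(G_1, \opH^j(\uzg, M)) \Rightarrow \opH^{i+j}(\Uz(G_1), M)$ which is a module over the one above; Theorem \ref{theorem:mastnak} renders $\opH^\bullet(\uzg,M)$ finite over $\opH^\bullet(\uzg,k)$, so $E_2^{\bullet,\bullet}(M)$ is finite over $E_2^{\bullet,\bullet}(\g)$, and this finiteness is preserved through $E_\infty$ and the abutment.

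The hard part will be the verification of the permanent-cycle property in cases (2) and (3), where the condition is on $\ell$ rather than on $p$: one must identify the correct shifted total degree at which to apply Proposition \ref{proposition:vanishingrange}, keep careful track of how a set of algebra generators of $k[\calN]$ (of degrees at most $h$) is promoted via Frobenius to classes in the relevant total degree, and ensure that the hypothesis on $\ell$ really does suffice to rule out every possibly nonzero outgoing differential. A secondary technical point is the explicit combinatorics verifying that the chosen lifts, together with the bottom row, suffice to generate $E_\infty^{\bullet,\bullet}(\g)$ as a bigraded algebra; this should follow from the tensor product decomposition of $E_2^{\bullet,\bullet}(\g)$ and the multiplicative structure of the spectral sequence.
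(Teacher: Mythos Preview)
Your overall strategy---analyze the LHS spectral sequence \eqref{eq:G1specseq}, exhibit a Noetherian subalgebra of permanent cycles over which $E_2$ is finite, then propagate finiteness to the abutment and to coefficient modules via Theorem \ref{theorem:mastnak}---matches the paper's, and your treatment of condition (1) is essentially correct. There are, however, two substantive gaps.

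First, the left column $E_2^{0,\bullet}(\g)$ is not $k[\calN]$ but only its $G_1$-invariants $k[\calN]^{G_1}$. The paper exploits precisely this: the subspace $S \subset \opH^{2p}(\uzg,k)$ spanned by $p$-th powers of elements of $\opH^2(\uzg,k)$ lies in $E_2^{0,2p}(\g)$ because $G_1$ acts trivially on $p$-th powers. The subalgebra $R$ generated by $S$ is large enough that $k[\calN]$ is finite over it, and then van der Kallen's lemma \cite{Kallen:2007} gives that $E_2^{\bullet,\bullet}(\g)$ is finite over $\opH^\bullet(G_1,k) \otimes R$. This pins down exactly which classes must be shown to be permanent cycles: those in $S$, all sitting in the single bidegree $(0,2p)$.

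Second---and this is the real gap---your proposed mechanism for conditions (2) and (3), namely adjusting the Frobenius exponent to one ``controlled by $\ell$'', is not available and is not what the paper does. The exponent $p$ is fixed by the characteristic, and Proposition \ref{proposition:vanishingrange} concerns total degree $2p+1$ only. Instead, the paper \emph{changes the root system}: it embeds $\Phi$ into a larger root system $\Phi_m$ of the same type ($\Phi_{n+3}$ for type $A_n$, $\Phi_{2n+1}$ for type $D_n$), obtains a morphism of spectral sequences $E_r^{\bullet,\bullet}(\g_m) \to E_r^{\bullet,\bullet}(\g)$, and uses Lemma \ref{lemma:uzgrestriction} to see that $S$ lies in the image of $E_2^{0,2p}(\g_m)$. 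The hypothesis on $\ell$ is exactly that $\ell$ exceed the Coxeter number of $\Phi_m$, so that Theorem \ref{theorem:uzgcohomology} applies to $\g_m$; then Proposition \ref{proposition:vanishingrange} applied to $\g_m$ (whose rank is now large enough relative to $p$) kills the relevant differentials upstream, and commutativity of the restriction square forces them to vanish on $S$ downstream. Without this root-system embedding trick there is no apparent way to exploit the hypotheses $\ell \geq h+3$ or $\ell \geq 2h+4$.
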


\begin{proof}
By Theorem \ref{theorem:HmoduleLHSspecseq}, there exists a spectral sequence satisfying
\begin{equation} \label{eq:secondkernelspecseqM}
E_2^{i,j}(M) = \opH^i(\Uz(G_1)//\uzg,\opH^j(\uzg,M)) \Rightarrow \opH^{i+j}(\Uz(G_1),M).
\end{equation}
Identifying $\Uz(G_1)//\uzg$ with $\Dist(G_1)$, we rewrite \eqref{eq:secondkernelspecseqM} as
\begin{equation}
E_2^{i,j}(M) = \opH^i(G_1,\opH^j(\uzg,M)) \Rightarrow \opH^{i+j}(\Uz(G_1),M).
\end{equation}
Moreover, since $\uz$ is a Hopf subalgebra of $\Uz(G_1)$, the spectral sequence $E_r(M)$ is a module over the spectral sequence $E_r(k)$.

Applying Theorem \ref{theorem:mastnak}, we obtain the following situation: $\opH^\bullet(\uzg,k)$ is a Noetherian $k$-algebra on which $G_1$ acts rationally by $k$-algebra automorphisms, $\opH^\bullet(\uzg,M)$ is a rational $G_1$-module on which $\opH^\bullet(\uzg,k)$ acts compatibly, and $\opH^\bullet(\uzg,M)$ is a finite module for $\opH^\bullet(\uzg,k)$. Then by \cite[Lemma 3.3, Theorem 3.5]{Kallen:2007}, $E_2^{\bullet,\bullet}(M) = \opH^\bullet(G_1,\opH^\bullet(\uzg,M))$ is a finite module for the Noetherian algebra $E_2^{\bullet,\bullet}(\g) := \opH^\bullet(G_1,\opH^\bullet(\uzg,k))$. To prove the assertion of the theorem, it now suffices by a standard argument (cf.\ \cite[Lemmas 7.4.4, 7.4.5]{Evens:1991}) to show that $E_2^{\bullet,\bullet}(\g)$ is finitely-generated over a Noetherian subalgebra of permanent cycles.

Define $S \subset \opH^{2p}(\uzg,k)$ to be the vector subspace spanned by all $p$-th powers of elements of $\opH^2(\uzg,k)$, and let $R \subset \opH^\bullet(\uzg,k)$ be the subalgebra generated by $S$. Evidently, $R \subset \opH^0(G_1,\opH^\bullet(\uzg,k))$, because $G_1$ acts trivially on all $p$-th powers of elements in $\opH^\bullet(\uzg,k)$. Also, $\opH^\bullet(\uzg,k)$ is finitely-generated over $R$. Applying \cite[Lemma 3.3]{Kallen:2007} again, we conclude that $E_2^{\bullet,\bullet}(\g)$ is finitely-generated over the subalgebra $\opH^\bullet(G_1,R) = \opH^\bullet(G_1,k) \otimes R$. We claim that $\opH^\bullet(G_1,R)$ consists of permanent cycles. Since the differential of \eqref{eq:secondkernelspecseq} is an algebra derivation, it suffices to show that the subspace $S \subset E_2^{0,2p}(\g)$ consists of permanent cycles.

Denote the differential $E_s^{i,j}(\g) \rightarrow E_s^{i+s,j-s+1}(\g)$ of \eqref{eq:secondkernelspecseq} by $d_s^{i,j}(\g)$. To prove the claim for $S$, it suffices to show that $d_{2s+1}^{0,2p}(\g)(S)=0$ for all $1 \leq s \leq p$. (We have used the fact that $E_2^{i,j}(\g)=0$ unless $j$ is even.) Suppose $\Phi$ is of type $A_n$ or $D_n$. According to Proposition \ref{proposition:vanishingrange}, if $d_{2s+1}^{0,2p}(\g) \neq 0$, then $s \geq n-2$. If $n > p+2$, then $n-2 > p$, so $d_{2s+1}^{0,2p}(\g) \equiv 0$ for all $1 \leq s \leq p$. This proves the claim when condition (1) is satisfied.

By assumption, $\rank(\Phi) = n$. For each $m \geq n$, let $\Phi_m$ be the rank $m$ indecomposable root system of the same Lie type as $\Phi$, and let $\g_m$ be the corresponding simple Lie algebra over $k$. Then the inclusion of root systems $\Phi \subseteq \Phi_m$ induces an inclusion of algebras $\Uzg \subset U_\zeta(\g_m)$, hence a morphism of spectral sequences $f_s^{\bullet,\bullet}:E_s^{\bullet,\bullet}(\g_m) \rightarrow E_s^{\bullet,\bullet}(\g)$, such that the map
\[
f_2^{0,\bullet}:E_2^{0,\bullet}(\g_m) \rightarrow E_2^{0,\bullet}(\g)
\]
is induced by the restriction map $\opH^\bullet(u_\zeta(\g_m),k) \rightarrow \opH^\bullet(\uzg,k)$ studied in Section \ref{subsection:restrictionmaps}. If $\ell$ is at least the Coxeter number of $\Phi_m$, so that Theorem \ref{theorem:uzgcohomology} holds for $\uz(\g_m)$ as well as for $\uzg$, then we can apply Lemma \ref{lemma:uzgrestriction} to conclude that $S \subseteq \im (f_2^{0,2p})$.

Now suppose condition (2) is satisfied, so that $\Phi$ has type $A_n$ and $\ell \geq n+4$. Then $\Phi \subset \Phi_{n+3}$, and for each $1 \leq s \leq p$, we have the following commutative diagram (where $\g' = \g_{n+3}$):
\begin{equation} \label{eq:commutativediagram}
\xymatrix@C+5ex{
E_{2s+1}^{0,2p}(\g') \ar[r]_{d_{2s+1}^{0,2p}(\g')} \ar[d]_{f_{2s+1}^{0,2p}} & E_{2s+1}^{2s+1,2p-2s}(\g') \ar[d]^{f_{2s+1}^{2s+1,2p-2s}} \\
E_{2s+1}^{0,2p}(\g) \ar[r]_{d_{2s+1}^{0,2p}(\g)} & E_{2s+1}^{2s+1,2p-2s}(\g)
}
\end{equation}
According to Proposition \ref{proposition:vanishingrange}, $d_{2s+1}^{0,2p}(\g) \equiv 0$ if $1 \leq s \leq (n-3)$ or if $(n + 1) \leq s \leq p$, and $d_{2s+1}^{0,2p}(\g') \equiv 0$ if $1 \leq s \leq n$ (because $n = \rank(\Phi_{n+3})-3$). Since $\ell$ is at least the Coxeter number of $\Phi_{n+3}$, we have $S \subseteq \im(f_2^{0,2p})$. It follows then from the commutativity of \eqref{eq:commutativediagram} that $d_{2s+1}^{0,2p}(\g)(S) = 0$ for $1 \leq s \leq n$, hence that $d_{2s+1}^{0,2p}(\g)(S) = 0$ for all $1 \leq s \leq p$. This proves that the set $S$ consists of permanent cycles whenever condition (2) is satisfied. The proof that $S$ consists of permanent cycles whenever condition (3) is satisfied is similar, and the details are left to the reader. (Embed $\Phi$ in $\Phi_{2n+1}$, which has Coxeter number $2(2n+1)-2 = 4n$. Then argue as for type $A$, using part (b) of Proposition \ref{proposition:vanishingrange}.) 
\end{proof}

\section{Higher Frobenius--Lusztig kernels} \label{section:higherFLkernels}

In this section we study the cohomology rings $\opH^\bullet(\Uzbr,k)$ and $\opH^\bullet(\Uzur,k)$ for the subalgebras $\Uzbr$ and $\Uzur$ of $\Uzgr$ corresponding to the Borel subgroup $B$ and its unipotent radical $U$. In the classical situation, finite-generation for $\opH^\bullet(U_r,k)$ can be proven via an inductive approach, by considering a filtration on $U_r$ by unipotent subgroups. For quantum groups the approach is similar in spirit, though much more technically complicated. The approach we follow here is inspired by the techniques of Ginzburg and Kumar \cite[\S 2.4]{Ginzburg:1993}.

\subsection{Algebra filtrations} \label{subsection:algebrafiltrations}

For $\bfr,\bfs \in \N^N$ and $\mu \in \Z\Phi$, define the total height of the monomial $F^{\bfr}K_\mu E^{\bfs} \in \Uq$ by $\hght(F^{\bfr}K_\mu E^{\bfs}) = \sum_{i=1}^N (r_i+s_i)\hght(\gamma_i)$. Then define the degree $d$ of $F^{\bfr}K_\mu E^{\bfs}$ by	
\begin{equation} \label{eq:degreemonomial}
d(F^{\bfr}K_\mu E^{\bfs}) = (r_N,r_{N-1},\ldots,r_1,s_1,\ldots,s_N,\hght(M_{\bfr,\bfs,u})) \in \N^{2N+1}.
\end{equation}
View $\Lambda:=\N^{2N+1}$ as a totally ordered semigroup via the reverse lexicographic ordering. Given $\eta \in \N^{2N+1}$, define $\U_{q,\eta}$ to be the linear span in $\Uq$ of all monomials $F^{\bfr}K_\mu E^{\bfs}$ with $d(F^{\bfr}K_\mu E^{\bfs}) \leq \eta$. Then the collection of subspaces $\U_{q,\eta}$ for $\eta \in \Lambda$ forms a multiplicative $\Lambda$-filtration of $\Uq$ \cite[\S 10.1]{De-Concini:1993}. This filtration induces multiplicative filtrations on $\Uz$ and on $\Uzgr$.

Fix $r \in \N$. We transform the $\Lambda$-filtration on $\Uzgr$ into an $\N$-filtration as follows. Set $\theta = 2p^r\ell$. Given $\bfr,\bfs \in \N^N$ and $u \in \Uzo$, set $M_{\bfr,\bfs,u} = F^{(\bfr)}uE^{(\bfs)}$. Now define $\deg(M_{\bfr,\bfs,u}) \in \N$ by
\[
\deg(M_{\bfr,\bfs,u}) = r_N + \theta r_{N-1} + \theta^2 r_{N-2} + \cdots + \theta^{N-1}r_1 + \theta^N s_1 + \cdots + \theta^{2N-1}s_N + \theta^{2N} \hght(M_{\bfr,\bfs,u}).
\]
Then $\deg(M_{\bfr,\bfs,u}) \leq \deg(M_{\bfr',\bfs',u'})$ if and only if $d(M_{\bfr,\bfs,u}) \leq d(M_{\bfr',\bfs',u'})$.

\begin{lemma} \label{lemma:Nfiltration}
For $n \in \N$, define $\Uzgr_n$ to be the subspace of $\Uzgr$ spanned by all monomials $M_{\bfr,\bfs,u} \in \Uzgr$ with $\deg(M_{\bfr,\bfs,u}) \leq n$. Then the subspaces $\Uzgr_n$ for $n \in \N$ define a multiplicative $\N$-filtration on $\Uzgr$. The associated graded algebras arising from the $\Lambda$- and $\N$-filtrations on $\Uzgr$ are canonically isomorphic as non-graded algebras.
\end{lemma}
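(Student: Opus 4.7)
The plan is to observe that $\deg(M_{\bfr,\bfs,u})$ is precisely the base-$\theta$ encoding of the tuple $d(M_{\bfr,\bfs,u}) \in \Lambda$, with the height coordinate occupying the most significant digit position. Since every $\Uzgr$-PBW monomial has exponents $r_i, s_i$ strictly less than $p^r\ell = \theta/2$, these entries occur as valid base-$\theta$ digits, and a standard positional-system argument shows the assignment is strictly order-preserving on the set of tuples $\eta = (a_1,\ldots,a_{2N+1})$ whose first $2N$ entries satisfy $a_i < \theta$: if two such tuples first disagree (from the right) at some position $k$, then the contributions from positions below $k$ sum in absolute value to strictly less than the power of $\theta$ attached to position $k$, so they cannot override a unit change at that position. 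The height coordinate may exceed $\theta$, but since it sits in the top position it is never threatened by the lower digits. One further observes that the degree function is $\Z$-linear in the tuple.

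For the first assertion (multiplicativity), I would invoke the multiplicativity of the $\Lambda$-filtration on $\Uq$ from \cite[\S 10.1]{De-Concini:1993}, which restricts to a multiplicative $\Lambda$-filtration on $\Uzgr$. Given PBW monomials $M_1, M_2 \in \Uzgr$ with $\deg(M_j) \leq n_j$, multiplicativity yields an expansion $M_1 M_2 = \sum c_i N_i$ in which each PBW monomial $N_i$ satisfies $d(N_i) \leq d(M_1) + d(M_2)$ in reverse-lex. Since $M_1 M_2 \in \Uzgr$ and the PBW basis of $\Uzgr$ is a subset of that of $\Uz$, every contributing $N_i$ is a $\Uzgr$-PBW monomial, hence its $r,s$-entries still satisfy the bound $< p^r\ell$. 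The sum tuple $d(M_1) + d(M_2)$ has $r,s$-entries bounded by $2(p^r\ell - 1) < \theta$, so the order-preserving principle applies and gives $\deg(N_i) \leq \deg(d(M_1) + d(M_2)) = \deg(M_1) + \deg(M_2) \leq n_1 + n_2$, placing $M_1 M_2 \in \Uzgr_{n_1+n_2}$.

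For the second assertion, I would identify both $\gr^\Lambda\Uzgr$ and $\gr^\N\Uzgr$ as $k$-vector spaces with the span of the $\Uzgr$-PBW basis via the canonical sections. In either associated graded ring, the product of two PBW symbols equals the sum of those monomials $N_i$ in the expansion of $M_1 M_2$ of maximal degree---meaning $d(N_i) = d(M_1)+d(M_2)$ in the $\Lambda$-case, and $\deg(N_i) = \deg(M_1)+\deg(M_2)$ in the $\N$-case. Since $\deg$ is order-preserving and injective on tuples with entries bounded by $\theta$, these two top-term conditions select exactly the same PBW monomials, so the identity on the PBW basis extends to an algebra isomorphism $\gr^\Lambda\Uzgr \stackrel{\sim}{\rightarrow} \gr^\N\Uzgr$. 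It is only an isomorphism of non-graded algebras because the two gradings are indexed by different monoids. The main obstacle is bookkeeping around the numerical threshold: the specific choice $\theta = 2p^r\ell$ rather than $p^r\ell$ is precisely what guarantees that sums of two $\Uzgr$-PBW degree tuples retain entries strictly below the base $\theta$, so that base-$\theta$ arithmetic remains order-preserving throughout the argument.
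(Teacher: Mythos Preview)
Your proposal is correct and follows essentially the same approach as the paper. The paper does not give an explicit proof of this lemma; it simply records, in the sentence immediately preceding it, the key fact that $\deg(M_{\bfr,\bfs,u}) \leq \deg(M_{\bfr',\bfs',u'})$ if and only if $d(M_{\bfr,\bfs,u}) \leq d(M_{\bfr',\bfs',u'})$, and leaves the rest to the reader. Your write-up supplies exactly the details one would expect here---the base-$\theta$ interpretation, linearity of $\deg$ in the tuple, and the observation that the choice $\theta = 2p^r\ell$ (rather than $p^r\ell$) is what keeps the entries of a \emph{sum} of two $\Uzgr$-PBW tuples strictly below $\theta$, so that monotonicity still applies when comparing $d(N_i)$ against $d(M_1)+d(M_2)$.
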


By restriction, we obtain an $\N$-filtration on $\Uzur$. Let $\Phi^+ = \set{\gamma_1,\ldots,\gamma_N}$ be an enumeration of $\Phi^+$ as in Section \ref{subsection:braidgroupauto}. Given $\alpha,\beta \in \Phi^+$, write $\alpha \prec \beta$ if $\alpha = \gamma_i$, $\beta = \gamma_j$, and $i<j$. Now define $\scrA$ to be the twisted polynomial algebra with generators
\begin{equation} \label{eq:scrAgenerators}
\set{X_\alpha,X_{p^i \ell \alpha}:\alpha \in \Phi^+, 0 \leq i \leq r-1},
\end{equation}
and relations
\begin{equation} \label{eq:scrArelations}
\begin{split}
X_\alpha X_\beta &= \zeta^{(\alpha,\beta)} X_\beta X_\alpha \quad \text{if $\alpha \prec \beta$}, \\
X_{p^i \ell \alpha} X_\beta &= X_\beta X_{p^i \ell \alpha}, \quad \text{and} \\
X_{p^i \ell \alpha} X_{p^j \ell \beta} &= X_{p^j \ell \beta} X_{p^i \ell \alpha}. 
\end{split}
\end{equation}
Applying \cite[Lemma 2.1]{Drupieski:2011a}, one sees that the associated graded algebra $\gr \Uzur$ is generated as an algebra by the symbols \eqref{eq:scrAgenerators}, subject to the relations \eqref{eq:scrArelations}, as well as the following additional relations:
\begin{equation}
X_\alpha^\ell = X_{p^i \ell \alpha}^p = 0 \quad \text{for each $\alpha \in \Phi^+$}.
\end{equation}
The algebra $\gr \Uzur$ inherits the structure of a $\Uzo$-module algebra, such that $X_\alpha$ has weight $-\alpha$ for $\Uzo$, and $X_{p^i\ell \alpha}$ has weight $-p^i\ell \alpha$ for $\Uzo$. Similarly, $\gr \Uzurp$ is also a $\Uzo$-module algebra, with generators $\set{Y_\alpha,Y_{p^i\ell \alpha}: \alpha \in \Phi^+, 0 \leq i \leq r-1}$ of weights $\alpha$ and $p^i\ell\alpha$ for $\Uzo$, respectively. The algebra $\gr \Uztr$ is canonically isomorphic to $\Uztr$.

\begin{lemma} \label{lemma:grUzgr}
The algebra $\gr \Uzgr$ is the smash product of $\Uztr$ and the tensor product of algebras $\gr \Uzur \otimes \gr \Uzurp$, that is, $\gr \Uzgr \cong (\gr \Uzur \otimes \gr \Uzurp) \# \Uztr$. The left action of $\Uztr$ on $\gr \Uzur \otimes\gr \Uzurp$ is induced by the left adjoint action of $\Uzo$ on $\Uzur$ and $\Uzurp$.
\end{lemma}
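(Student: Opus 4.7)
The plan is to exploit the triangular decomposition $\Uzgr \cong \Uzur \otimes \Uztr \otimes \Uzurp$ together with a careful analysis of how commutation relations in $\Uzgr$ collapse in the associated graded with respect to the $\N$-filtration of Lemma~\ref{lemma:Nfiltration}. First I would observe that the PBW monomials $F^{(\bfr)} u E^{(\bfs)}$ with $u$ running over a fixed basis of $\Uztr$ form a basis of $\Uzgr$, and that an element $K_\mu \in \Uztr$ has filtration degree $0$ while $F^{(\bfr)} u E^{(\bfs)}$ has degree determined entirely by $(\bfr,\bfs)$ and the total height. This gives a $k$-linear isomorphism $\gr \Uzgr \cong \gr \Uzur \otimes \Uztr \otimes \gr \Uzurp$, with $\Uztr$ appearing undeformed because its elements are filtration-degree zero.

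Second, I would check the three kinds of cross-relations in $\gr \Uzgr$. The relations between $\Uztr$ and the root vectors/divided powers in $\Uzur$ (resp.\ $\Uzurp$) are exact in $\Uz$, namely $K_\alpha F_\gamma^{(n)} = \zeta^{-n(\alpha,\gamma)} F_\gamma^{(n)} K_\alpha$ and similarly with $E$'s, so they descend verbatim to $\gr \Uzgr$; this is precisely the smash-product relation for the adjoint action of $\Uztr$ on $\gr \Uzur \otimes \gr \Uzurp$. The internal relations within $\gr \Uzur$ and $\gr \Uzurp$ are given by Lemma~2.1 of \cite{Drupieski:2011a} and were summarized in Section~\ref{subsection:algebrafiltrations}. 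The crucial point is the third relation: the commutation of $\gr \Uzur$ with $\gr \Uzurp$. Using the Levendorskii--Soibelman-type straightening formulas, for any positive roots $\gamma,\delta$ and admissible exponents $m,n$, one has
\[
E_\gamma^{(m)} F_\delta^{(n)} - F_\delta^{(n)} E_\gamma^{(m)} \in \sum_{\bfr,\bfs,\mu} k \cdot F^{(\bfr)} K_\mu E^{(\bfs)}
\]
with each summand of strictly smaller total height than $m \hght(\gamma) + n \hght(\delta)$, because every such commutator produces at least one toral factor (or, in the case $\gamma \neq \delta$, cancellations among root-vector products) that reduces the height contribution. Since the degree weight $\theta^{2N}$ of the height term dominates, these correction terms lie in strictly lower filtration degree and vanish in $\gr \Uzgr$. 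Therefore $Y_\gamma$ and $X_\delta$ commute, which is exactly what is needed to identify the subalgebra generated by $\gr \Uzur$ and $\gr \Uzurp$ with their (untwisted) tensor product.

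Third, with the three families of relations verified, I would assemble the identification: the multiplication map
\[
(\gr \Uzur \otimes \gr \Uzurp) \# \Uztr \longrightarrow \gr \Uzgr, \qquad (x \otimes y) \# u \longmapsto x \cdot u \cdot y
\]
is an algebra homomorphism by the relations established above, and it is a linear isomorphism by the triangular decomposition observation of the first step. The smash-product structure is then forced by the commutation of $\Uztr$-elements past $\gr \Uzur \otimes \gr \Uzurp$ through the adjoint action of $\Uzo$, which on the root vectors $F_\gamma^{(n)}$ and $E_\gamma^{(n)}$ scales by the corresponding character of $\Uztr$.

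The main obstacle I anticipate is the bookkeeping for the mixed $E$--$F$ commutators involving higher root vectors and divided powers: one must verify that every term produced by a Levendorskii--Soibelman straightening lowers the total height strictly, which, while standard in spirit, requires an induction over the PBW ordering and an appeal to the weight of the toral correction factor. The analogous work for the $\Uzur$-internal relations was already carried out in \cite[Lemma~2.1]{Drupieski:2011a}, so this mixed case is the only genuinely new computation, and it is the place where the choice of weighting by powers of $\theta$ in \eqref{eq:degreemonomial} is essential.
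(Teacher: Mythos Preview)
The paper states this lemma without proof, treating it as an immediate consequence of the De Concini--Procesi result \cite[\S 10.1]{De-Concini:1993} (cited for the multiplicativity of the $\Lambda$-filtration) together with the explicit description of $\gr \Uzur$ given in the paragraph preceding the lemma. Your proposal is a correct and reasonably detailed elaboration of exactly what the paper leaves implicit: the triangular decomposition furnishes the linear isomorphism, the toral relations are exact and survive unchanged, and the $E$--$F$ commutators drop in filtration degree because the total-height term (weighted by $\theta^{2N}$) dominates.

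Two minor points are worth tidying. First, the map you write at the end, $(x\otimes y)\# u \mapsto x\cdot u\cdot y$, is not the canonical smash-product identification; the natural map is $(x\otimes y)\# u \mapsto xyu$ (or $uxy$, depending on left/right conventions), and the triangular-basis vector $x\cdot u\cdot y$ differs from this only by the scalar coming from the adjoint action of $u$ on $y$. This does not affect the argument but obscures why the map is a homomorphism. Second, the ``bookkeeping obstacle'' you flag for the mixed $E$--$F$ commutators is precisely the content of the De Concini--Procesi theorem on the associated graded of $\Uq$: in $\gr \Uq$ the $E_\gamma$ and $F_\delta$ commute exactly. So rather than redo that induction over the PBW ordering, you can simply invoke \cite[\S 10.1]{De-Concini:1993} for this fact and then observe that it passes to the integral form and to $\Uzgr$ along with the divided powers, using \cite[Lemma~2.1]{Drupieski:2011a} as you already do for the internal $\Uzur$- and $\Uzurp$-relations.
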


\subsection{Finite generation for nilpotent and Borel subalgebras} \label{subsection:fgborel}

Define the algebra $\Lambda_{\zeta,r}^\bullet$ to be the graded algebra with generators
\[
\set{x_\alpha,x_{p^i\ell \alpha}: \alpha \in \Phi^+,0 \leq i \leq r-1},
\]
each of graded degree 1, subject to the following relations:
\begin{align}
x_\alpha x_\beta + \zeta^{-(\alpha,\beta)}x_\beta x_\alpha &= 0 \quad \text{if} \quad \alpha \prec \beta, \label{eq:firstlambdarelation} \\
x_{p^i \ell \alpha}x_\beta + x_\beta x_{p^i \ell \alpha} &= 0, \\
x_{p^i \ell \alpha} x_{p^j \ell \beta} + x_{p^j \ell \beta}x_{p^i \ell \alpha} &= 0, \\
\text{and} \quad x_\alpha^2= x_{p^i \ell \alpha}^2 &= 0. \label{eq:lastlambdarelation}
\end{align}
Then $\Lambda_{\zeta,r}^\bullet$ is a left $U_\zeta^0$-module algebra by assigning weight $\alpha$ to $x_\alpha$ and weight $p^i\ell \alpha$ to $x_{p^i \ell \alpha}$.

\begin{lemma} \label{lemma:scrAcohomology}
There exists a graded $U_\zeta^0$-algebra isomorphism $\opH^\bullet(\scrA,k) \cong \Lambda_{\zeta,r}^\bullet$.
\end{lemma}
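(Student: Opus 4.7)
The algebra $\scrA$ is a ``quantum complete intersection'': a twisted tensor product of truncated polynomial rings $k[X]/(X^n)$ (with $n=\ell$ or $n=p$) in which the variables $\zeta$-skew-commute according to the root pairings. My strategy is to adapt the Ginzburg--Kumar calculation of $\opH^\bullet(\uzu,k)$ in \cite[\S 2.4]{Ginzburg:1993} to the present setting, accommodating the additional divided-power generators $X_{p^i\ell\alpha}$ that appear when $r\ge 1$.

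First I would build an explicit $\Uzo$-equivariant free resolution $P^\bullet \twoheadrightarrow k$ of the trivial $\scrA$-module by taking a twisted tensor product of the standard $2$-periodic resolutions
\[
\cdots \to k[X]/(X^n) \xrightarrow{X^{n-1}} k[X]/(X^n) \xrightarrow{X} k[X]/(X^n) \to k
\]
for each single factor. Since the generators of $\scrA$ only $\zeta$-commute, the rank-one resolutions combine into an $\scrA$-free complex provided each differential is decorated with compensating $\zeta$-scalars dictated by \eqref{eq:scrArelations}. Exactness and minimality can then be checked by imposing the multiplicative filtration of Section~\ref{subsection:algebrafiltrations} on $P^\bullet$ and reducing to the classical commutative Koszul / tensor-product case on the associated graded.

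Next I would apply $\Hom_\scrA(-,k)$ and read off the cohomology. The classes $x_\alpha$ and $x_{p^i\ell\alpha}$ emerge in cohomological degree one as the duals of the rank-one multiplication-by-$X$ differentials, carrying $\Uzo$-weights $\alpha$ and $p^i\ell\alpha$ respectively (so $\Uzo$-equivariance is immediate from the weight grading of $P^\bullet$). A direct Yoneda-product computation on $P^\bullet$ then yields the quadratic relations \eqref{eq:firstlambdarelation}--\eqref{eq:lastlambdarelation}: the flip of twist from $\zeta^{(\alpha,\beta)}$ in $\scrA$ to $\zeta^{-(\alpha,\beta)}$ in the $\Ext$-algebra reflects the standard inversion of a quantum twist under dualization, and the squaring relations $x^2=0$ follow from the minimality of the periodic resolutions in each rank-one factor, exactly as in \cite[\S 2.4]{Ginzburg:1993}. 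Finally, to see that $\opH^\bullet(\scrA,k)$ is exhausted by these degree-one generators (rather than being $\Lambda\otimes$polynomial), one tracks how the mod-$\ell$ (resp.\ mod-$p$) periodicity classes in the rank-one factors are identified with products of degree-one classes under the twisted Yoneda multiplication, following the counting/dimension argument used in the $r=0$ case.

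The main technical obstacle is the explicit, consistent assembly of the twisted tensor-product resolution $P^\bullet$: the signs and $\zeta$-powers decorating the differentials must be selected so that the total complex is simultaneously (a) a chain complex, (b) exact in positive degrees, and (c) produces \emph{exactly} the quadratic relations of $\Lambda_{\zeta,r}^\bullet$ rather than relations that hold only up to higher-order corrections. This bookkeeping is formal but must be carried out carefully; once $P^\bullet$ is in hand, the identification of $\opH^\bullet(\scrA,k)$ with $\Lambda_{\zeta,r}^\bullet$ as a $\Uzo$-algebra is then essentially routine.
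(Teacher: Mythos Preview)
You have misidentified the algebra $\scrA$. In Section~\ref{subsection:algebrafiltrations}, $\scrA$ is defined as the \emph{untruncated} twisted polynomial algebra on the generators $X_\alpha, X_{p^i\ell\alpha}$, subject only to the skew-commutation relations \eqref{eq:scrArelations}; it is infinite-dimensional. The relations $X_\alpha^\ell = X_{p^i\ell\alpha}^p = 0$ cut out the quotient $\gr \Uzur = \scrA//\scrZ$, which is a separate object. Your proposal---periodic resolutions of $k[X]/(X^n)$, periodicity classes, and the worry about ``$\Lambda\otimes\text{polynomial}$''---is the computation of $\opH^\bullet(\gr \Uzur,k)$, not of $\opH^\bullet(\scrA,k)$. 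That computation is Proposition~\ref{proposition:inductivealgebraiso}, and it really does produce $\Lambda_{\zeta,r}^\bullet \otimes S^\bullet(R^*)$; the polynomial factor does not collapse into products of degree-one classes as you suggest in your last paragraph (already for $k[X]/(X^n)$ with $n>2$, $\Ext^\bullet(k,k)\cong k[x,y]/(x^2)$ with $\deg x=1$, $\deg y=2$, and $y$ is not $x^2$).

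For the actual lemma, the correct tool is a (quantum) Koszul resolution for the twisted polynomial algebra $\scrA$: a length-$m$ free resolution built from the quantum exterior algebra, not a $2$-periodic one. Applying $\Hom_\scrA(-,k)$ then gives precisely $\Lambda_{\zeta,r}^\bullet$, with the sign/$\zeta$-twist inversion you describe. The paper simply cites \cite[Proposition~2.1]{Ginzburg:1993} (note: Proposition~2.1, not \S2.4) and \cite[Theorem~4.1]{Mastnak:2010} for this Koszul computation; your reference to \cite[\S2.4]{Ginzburg:1993} points instead to the analogue of Proposition~\ref{proposition:inductivealgebraiso}.
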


\begin{proof}
See \cite[Proposition 2.1]{Ginzburg:1993} or \cite[Theorem 4.1]{Mastnak:2010}.
\end{proof}

We now follow the strategy of \cite[\S 2.4]{Ginzburg:1993} to compute the structure of the cohomology ring $\opH^\bullet(\gr \Uzur,k)$. While we could use results of Mastnak, Pevtsova, Schauenburg and Witherspoon \cite[Theorem 4.1]{Mastnak:2010} to compute $\opH^\bullet(\gr \Uzur,k)$ directly, the extra information we obtain as a result of the inductive approach below will enable us to study the cohomology of the ungraded algebra $\Uzur$.

Enumerate the indeterminates in \eqref{eq:scrAgenerators} as $X_1,X_2,\ldots,X_m$. If $X_j = X_\alpha$ for some $\alpha \in \Phi^+$, set $X_j^\epsilon = X_j^\ell$; otherwise, set $X_j^\epsilon = X_j^p$. For $1 \leq j \leq m$, let $R_j$ be the vector subspace of $\scrA$ spanned by the elements $X_1^\epsilon,X_2^\epsilon,\ldots,X_j^\epsilon$, and let $\scrZ_j$ be the central subalgebra of $\scrA$ generated by $R_j$. Set $\fraku_j = \scrA//\scrZ_j$. Then $\fraku_0=\scrA$, $\fraku_{j+1} = \fraku_j//(X_{j+1}^\epsilon)$, and $\fraku_m=\gr \Uzur$.

\begin{proposition} \label{proposition:inductivealgebraiso}
For each $0 \leq j \leq m$, there is a graded $U_\zeta^0$-module algebra isomorphism
\[ \opH^\bullet(\fraku_j,k) \cong \Lambda_{\zeta,r}^\bullet \otimes S^\bullet(R_j^*), \]
where the elements of $S^1(R_j^*) = R_j^* = \Hom_k(R_j,k)$ are assigned graded degree two.
\end{proposition}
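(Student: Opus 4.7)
The plan is to induct on $j$. The base case $j = 0$ reduces to Lemma \ref{lemma:scrAcohomology}: since $R_0 = 0$ we have $S^\bullet(R_0^*) = k$, and $\fraku_0 = \scrA$.

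For the inductive step, assume the isomorphism for $j$, and let $y \in \fraku_j$ denote the image of $X_{j+1}^\epsilon$. Using the PBW-type monomial basis inherited by $\fraku_j$ from $\scrA$, I would first verify that $y$ is central and a non-zero-divisor, that the subalgebra $k[y] \subset \fraku_j$ it generates is a genuine polynomial algebra, that $\fraku_j$ is free on either side as a $k[y]$-module with basis the monomials whose $X_{j+1}$-exponent is less than $\epsilon_{j+1}$, and that $\fraku_{j+1} = \fraku_j//k[y]$. Since $\Uzo$ is cocommutative and stabilizes $k[y]$, Theorem \ref{theorem:LHSspecseqofalgebras} then supplies a spectral sequence of $\Uzo$-module algebras
\[
E_2^{p,q} = \opH^p(\fraku_{j+1}, k) \otimes \opH^q(k[y], k) \Longrightarrow \opH^{p+q}(\fraku_j, k).
\]
A direct computation gives $\opH^\bullet(k[y], k) \cong \Lambda(\xi)$, with $\xi$ in degree $1$ of $\Uzo$-weight opposite to that of $y$, so only the two rows $q = 0,1$ are nonzero, $d_r \equiv 0$ for $r \geq 3$, and the spectral sequence collapses at $E_3$. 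Multiplicativity forces $d_2$ to be the $\opH^\bullet(\fraku_{j+1}, k)$-linear derivation determined by the single transgression $c := d_2(\xi) \in \opH^2(\fraku_{j+1}, k)$.

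The crux is to establish that $c$ is a non-zero-divisor of the correct $\Uzo$-weight. Nonvanishing of $c$ is immediate from degree-one comparison: $\opH^1(\fraku_j, k)$ and $\opH^1(\fraku_{j+1}, k)$ are both dual to the space of indecomposables, which in each case is the common span of the generators $X_1,\ldots,X_m$ (all imposed relations lie in the square of the augmentation ideal), hence of equal dimension. The identity $\opH^1(\fraku_j,k) \cong \opH^1(\fraku_{j+1},k) \oplus \ker(d_2|_{E_2^{0,1}})$ then forces $d_2(\xi) \neq 0$. Injectivity of multiplication by $c$ on $\opH^\bullet(\fraku_{j+1}, k)$ is established inductively in cohomological degree: the inductive hypothesis $\opH^\bullet(\fraku_j, k) \cong \Lambda_{\zeta,r}^\bullet \otimes S^\bullet(R_j^*)$ places a dimension bound on the abutment that forces $E_\infty^{\bullet,1} = 0$ in every degree. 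This degree-by-degree dimension-counting argument is the main obstacle; everything else is formal.

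Once $c$ is known to be a non-zero-divisor, the collapse at $E_3$ furnishes short exact sequences
\[
0 \to \opH^{n-2}(\fraku_{j+1}, k) \xrightarrow{\;\cdot c\;} \opH^n(\fraku_{j+1}, k) \to \opH^n(\fraku_j, k) \to 0
\]
of $\Uzo$-modules. Setting $R_{j+1}^* = R_j^* \oplus k\cdot c$, with $c$ of $\Uzo$-weight dual to that of $y$, these sequences splice together to identify $\opH^\bullet(\fraku_{j+1}, k)$ with $\Lambda_{\zeta,r}^\bullet \otimes S^\bullet(R_{j+1}^*)$ as a graded $\Uzo$-module algebra, completing the induction.
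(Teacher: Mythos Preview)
Your overall strategy---induct on $j$ via the two-row spectral sequence for the central extension $k[y]\hookrightarrow\fraku_j\twoheadrightarrow\fraku_{j+1}$---matches the paper's. But two of the steps you label as routine are exactly where the work lies, and as written they do not go through.

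\textbf{The dimension-count does not force $E_\infty^{\bullet,1}=0$.} Write $a_n=\dim\opH^n(\fraku_{j+1},k)$, $b_n=\dim\opH^n(\fraku_j,k)$, and $k_n=\dim\ker(c\cdot{}:\opH^n\to\opH^{n+2})$. The two-row collapse gives $b_n=a_n-a_{n-2}+k_{n-2}+k_{n-1}$. Knowing all the $b_n$ from the inductive hypothesis leaves you one equation in the two unknowns $a_n$ and $k_{n-1}$ at each step: you can always trade a larger kernel against a smaller $a_n$, so no bound on the abutment alone pins down $k_n=0$. The paper closes this gap by first proving that the restriction map $\opH^a(\fraku_{j+1},k)\to\opH^a(\fraku_j,k)$ is surjective for every $a$ (Step~1): by induction $\opH^\bullet(\fraku_j,k)$ is generated in degrees $\le 2$, the degree-one map is the isomorphism on indecomposables you already noted, and surjectivity in degree two is supplied by \cite[Lemma~2.10]{Gordon:2000}. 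Surjectivity forces $E_\infty^{a,0}\cong\opH^a(\fraku_j,k)$, hence $E_\infty^{a,b}=0$ for $b>0$ (Step~2), and \emph{then} injectivity of $d_2^{a,1}$ gives that $c$ is a non-zero-divisor (Step~5). You need some input of this kind; the bare dimension recursion is underdetermined.

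\textbf{The short exact sequences do not ``splice'' into an algebra isomorphism.} Granting that $c$ is regular, you obtain $\opH^\bullet(\fraku_{j+1},k)/(c)\cong\opH^\bullet(\fraku_j,k)$ as graded $\Uzo$-modules, but to conclude $\opH^\bullet(\fraku_{j+1},k)\cong\Lambda_{\zeta,r}^\bullet\otimes S^\bullet(R_{j+1}^*)$ as \emph{algebras} you must lift the generators of $\Lambda_{\zeta,r}^\bullet$ and of $S^\bullet(R_j^*)$ to $\opH^\bullet(\fraku_{j+1},k)$ and verify they still satisfy the relations (\ref{eq:firstlambdarelation})--(\ref{eq:lastlambdarelation}) and commute with one another. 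A priori the lifted relation $\wt{x}_\alpha\wt{x}_\beta+\zeta^{-(\alpha,\beta)}\wt{x}_\beta\wt{x}_\alpha$ could equal a nonzero multiple of $c$. The paper handles this in Step~4 by introducing an auxiliary $m$-dimensional torus $(k^\times)^m$ acting on $\scrA$ with the generators $X_1,\ldots,X_m$ in independent characters; comparing $T^m$-weights then forces the obstruction scalars to vanish (the $\Uzo$-weights alone are not fine enough, since distinct generators can share a root-lattice weight). Your proposal needs an argument of this type.
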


\begin{proof}
We proceed by induction on $j$, following the strategy of \cite[\S 2.4]{Ginzburg:1993}. In fact, with the exception of Step 4, the proof is formally the same as that given by Ginzburg and Kumar for the case $r=0$.

If $j = 0$, the proposition reduces to Lemma \ref{lemma:scrAcohomology}, so assume by induction that the proposition is true for all $0 \leq i \leq j$. Let $A = A_{j+1}$ be the central subalgebra of $\fraku_j$ generated by $X_{j+1}^\epsilon$. Then $\fraku_j//A \cong \fraku_{j+1}$, and $\fraku_j$ is free over $A$. Then by Theorem \ref{theorem:LHSspecseqofalgebras}, there exists a spectral sequence of $\Uzo$-module algebras satisfying
\begin{equation} \label{eq:ujspecseq}
E_2^{a,b} = \opH^a(\fraku_{j+1},k) \otimes \opH^b(A,k) \Rightarrow \opH^{a+b}(\fraku_j,k).
\end{equation}

\noindent \textbf{Step 1.} For each $a \geq 0$, the restriction homomorphism $r_a:\opH^a(\fraku_{j+1},k) \rightarrow \opH^a(\fraku_j,k)$ is surjective. The map $r_1: \opH^1(\fraku_{j+1},k) \rightarrow \opH^1(\fraku_j,k)$ is an isomorphism.

\begin{proof}
By the induction hypothesis, $\opH^\bullet(\fraku_j,k)$ is generated by elements of degree $\leq 2$, so it suffices to prove the surjectivity of $r_1$ and $r_2$. The map $r_1$ is an isomorphism because, for any augmented algebra $B$, $\opH^1(B,k) \cong (B_+/B_+^2)^*$. The surjectivity of $r_2$ follows from \cite[Lemma 2.10]{Gordon:2000}.
\end{proof}

\noindent \textbf{Step 2.} In \eqref{eq:ujspecseq}, $E_\infty^{a,b}=0$ for all $b>0$.

\begin{proof}
There exists a commutative diagram
\begin{equation} \label{eq:commdiagram}
\xymatrix{
E_2^{a-2,1} \ar[r]^{d_2} & E_2^{a,0} \ar@{->>}[r] \ar@{=}[d] & E_\infty^{a,0} \ar@{^{(}->}[r] & \opH^a(\fraku_j,k) \\
&  \opH^a(\fraku_{j+1},k) \ar[urr]_{r_a} & &
}
\end{equation}
By Step 1, $r_a$ is surjective. Then the inclusion $E_\infty^{a,0} \hookrightarrow \opH^a(\fraku_j,k)$ must be an isomorphism, hence $E_\infty^{a,b}=0$ for all $b>0$ (because the spectral sequence converges to $\opH^\bullet(\fraku_j,k)$).
\end{proof}

\noindent \textbf{Step 3.} Choose $0 \neq v \in \im(d_2^{0,1}) \subseteq \opH^2(\fraku_{j+1},k)$. Such $v$ exists because $E_\infty^{0,1}=0$ and $\dim \opH^1(A,k)=1$. Then the kernel of the restriction map $r:\opH^\bullet(\fraku_{j+1},k) \rightarrow \opH^\bullet(\fraku_j,k)$ is generated by $v$.

\begin{proof}
From \eqref{eq:commdiagram} and the isomorphism $E_\infty^{a,0} \cong \opH^a(\fraku_j,k)$, we conclude that the kernel of the restriction map $r:\opH^\bullet(\fraku_{j+1},k) \rightarrow \opH^\bullet(\fraku_j,k)$ is equal to the image of $E_2^{\bullet,1}$ under the differential $d_2$. Choose $y \in \opH^1(A,k)$ with $d_2^{0,1}(y)=v$. Now an arbitrary element of $E_2^{n,1}$ can be written in the form $x \otimes y$ for some $x \in \opH^n(\fraku_{j+1},k)$. Then $d_2(x \otimes y) = d_2(x) \otimes y + (-1)^n x \cdot d_2(y) = (-1)^n x \cdot v$ is an element of the two-sided ideal in $\opH^\bullet(\fraku_{j+1},k)$ generated by $v$.
\end{proof}

\noindent \textbf{Step 4.} The algebra homomorphism $r:\opH^\bullet(\fraku_{j+1},k) \rightarrow \opH^\bullet(\fraku_j,k)$ admits a graded algebra splitting that commutes with the action of $U_\zeta^0$.

\begin{proof}
By induction, there exists a $U_\zeta^0$-module algebra isomorphism $\opH^\bullet(\fraku_j,k) \cong \Lambda_{\zeta,r}^\bullet \otimes S^\bullet(R_j^*)$. To prove the claim, we must lift the generators for $\Lambda_{\zeta,r}^\bullet$ and $S^\bullet(R_j^*)$ to $\opH^\bullet(\fraku_{j+1},k)$, and check that the relations among the generators are preserved.

To the pairs $(\scrA,\scrZ_j)$ and $(\scrA,\scrZ_{j+1})$, there exist LHS spectral sequences as in Theorem \ref{theorem:LHSspecseqofalgebras}. The natural morphism $(\scrA,\scrZ_j) \rightarrow (\scrA,\scrZ_{j+1})$ induced by the inclusion $\scrZ_j \subset \scrZ_{j+1}$ induces a morphism of spectral sequence, the low degree terms of which form the following commutative diagram of $\Uzo$-modules:
\[
\xymatrix{
R_{j+1}^* \ar@{=}[r] \ar@{->>}[d] & \opH^1(\scrZ_{j+1},k) \ar@{->>}[d] \ar[r]^{d'} & \opH^2(\scrA//\scrZ_{j+1},k) \ar@{->>}[d]^r \ar@{=}[r] & \opH^2(\fraku_{j+1},k) \ar@{->>}[d]^r \\
R_j^* \ar@{=}[r] & \opH^1(\scrZ_j,k) \ar[r]^{d''} & \opH^2(\scrA//\scrZ_j,k) \ar@{=}[r] & \opH^2(\fraku_j,k)
}
\]
The horizontal maps $d',d''$ are the differentials of the appropriate spectral sequences, and the projection $R_{j+1}^* \twoheadrightarrow R_j^*$ is the restriction of functions. Also, the image of $R_j^*=\opH^1(\scrZ_j,k)$ in $\opH^2(\fraku_j,k)$ under $d''$ identifies with the subspace $S^1(R_j^*) \subset S^\bullet(R_j^*) \subset \opH^\bullet(\fraku_j,k)$. Now any $U_\zeta^0$-module splitting of the projection $R_{j+1}^* \twoheadrightarrow R_j^*$ provides a lifting of the generators of $S^\bullet(R_j^*)$ to $\opH^\bullet(\fraku_{j+1},k)$. The lifted generators have central image in $\opH^\bullet(\fraku_{j+1},k)$ by \cite[Corollary 5.3]{Ginzburg:1993}.

Next, consider the generators $x_1,\ldots,x_m$ of $\Lambda_{\zeta,r}^\bullet$ as elements of $\opH^1(\fraku_j,k)$. By Step 1, the restriction map $r_1:\opH^1(\fraku_{j+1},k) \rightarrow \opH^1(\fraku_j,k)$ is a $U_\zeta^0$-module isomorphism. We use the inverse map $(r_1)^{-1}: \opH^1(\fraku_j,k) \rightarrow \opH^1(\fraku_{j+1},k)$ to transfer the generators of $\opH^1(\fraku_j,k)$ to $\opH^1(\fraku_{j+1},k)$. Set $\wt{x}_i = (r_1)^{-1}(x_i)$. To prove the claim of Step 4, it now suffices to show that the elements $\wt{x}_1,\ldots,\wt{x}_m \in \opH^\bullet(\fraku_{j+1},k)$ satisfy the relations (\ref{eq:firstlambdarelation}--\ref{eq:lastlambdarelation}).

Consider, for example, the relation (\ref{eq:firstlambdarelation}). If $\alpha \prec \beta$, then
\[ \textstyle
r \left( \wt{x}_\alpha \wt{x}_\beta + \zeta^{-(\alpha,\beta)} \wt{x}_\beta \wt{x}_\alpha \right) = x_\alpha x_\beta + \zeta^{-(\alpha,\beta)} x_\beta x_\alpha = 0,
\]
hence
\begin{equation} \label{eq:tilderelation}
\wt{x}_\alpha \wt{x}_\beta + \zeta^{-(\alpha,\beta)} \wt{x}_\beta \wt{x}_\alpha = cv
\end{equation}
for some $c \in k$ by Step 3. So we must show $c=0$. Here is where our argument deviates from that of \cite[\S 2.4]{Ginzburg:1993}: Since the algebra $\scrA$ is defined in terms of homogeneous relations on the independent generators $X_1,\ldots,X_m$, we can define an action of any $m$-dimensional algebraic torus $T^m = (k^\times)^m$ on $\scrA$ by declaring the generator $X_i$ to have weight $-\chi_i$ for $T^m$, where $\chi_i:T^m \rightarrow k^\times$ denotes the $i$-th coordinate function. This induces an action of $T^m$ on $\Lambda_{\zeta,r}^\bullet \cong \opH^\bullet(\scrA,k)$ such that $x_i$ has weight $\chi_i$ for $T^m$. Now suppose $x_\alpha = x_a$ and $x_\beta = x_b$ with $1 \leq a,b \leq m$. Then the left side of \eqref{eq:tilderelation} has weight $\chi_a+\chi_b$ for $T^m$, while the right side has weight $\epsilon \cdot \chi_{j+1}$ for $T^m$, where as before we set $\epsilon=\ell$ if $X_{j+1}=X_\gamma$ for some $\gamma \in \Phi^+$, and $\epsilon = p$ otherwise. Since $a \neq b$, and since $\ell$ and $p$ are odd, we must have $\chi_a+\chi_b \neq \epsilon \cdot \chi_{j+1}$. This forces $c=0$. The other relations among the $\wt{x}_i$ are proved in a similar manner.
\end{proof}

\noindent \textbf{Step 5.} The element $v$ introduced in Step 3 is not a zero-divisor in $\opH^\bullet(\fraku_{j+1},k)$.

\begin{proof}
By Step 2, $E_3^{a,1} \cong E_\infty^{a,1} = 0$, hence the differential $d_2^{a,1}$ in \eqref{eq:ujspecseq} must be injective. Now for $0 \neq x \otimes y \in \opH^a(\fraku_{j+1},k) \otimes \opH^1(A,k) = E_2^{a,1}$, we have
\[
0 \neq d_2(x \otimes y) = d_2(x) \cdot y + (-1)^a x \cdot d_2(y) = (-1)^a x \cdot v.
\qedhere \]
\end{proof}

\noindent The results of Steps 3--5 complete the proof of the proposition.
\end{proof}

The multiplicative filtration on $\Uzur$ induces a decreasing filtration on the cobar complex $\Cbul(\Uzur,k)$ computing $\opH^\bullet(\Uzur,k)$, and hence gives rise to a spectral sequence of algebras
\begin{equation} \label{eq:grUzurspecseq}
E_1^{i,j} = \opH^{i+j}(\gr \Uzur,k)_{(i)} \Rightarrow \opH^{i+j}(\Uzur,k),
\end{equation}
where the subscript on the $E_1$-term denotes the internal grading induced by the grading of $\gr \Uzur$. Recall the notation introduced just before Proposition \ref{proposition:inductivealgebraiso}. Set $R=R_m$ and $\scrZ = \scrZ_m$. Then $\opH^\bullet(\gr \Uzur,k) \cong \Lambda_{\zeta,r}^\bullet \otimes S^\bullet(R^*)$, where the space $R^* = S^1(R^*)$ is assigned graded degree two.

\begin{proposition} \label{proposition:Rpermanentcycles}
In \eqref{eq:grUzurspecseq}, the subspace $R^* = S^1(R^*)$ of $\opH^2(\gr \Uzur,k)$ consists of permanent cycles.
\end{proposition}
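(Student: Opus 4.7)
The strategy is to lift each element of $R^* = S^1(R^*) \subset \opH^2(\gr\Uzur,k)$ to the abutment $\opH^2(\Uzur,k)$. Since the spectral sequence \eqref{eq:grUzurspecseq} arises from a multiplicative filtration on the cobar complex of $\Uzur$, any class on the $E_1$-page that lies in the image of $\opH^\bullet(\Uzur,k)$ is automatically a permanent cycle; it therefore suffices to exhibit such lifts for a spanning set of $R^*$.

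To produce the lifts, I would mimic the inductive construction behind Proposition \ref{proposition:inductivealgebraiso}, but working in $\Uzur$ rather than in $\gr\Uzur$. Enumerate the central generators $X_1^\epsilon,\ldots,X_m^\epsilon$ of $\scrZ \subset \gr\Uzur$ and choose corresponding lifts $\wt{X}_j^\epsilon \in \Uzur$ (namely, the $\ell$-th powers $F_\gamma^\ell$ of root vectors and the $p$-th powers of the divided-power generators $F_\alpha^{(p^i\ell)}$). Inductively build a chain of subalgebras $\wt{\fraku}_0 \subset \wt{\fraku}_1 \subset \cdots \subset \wt{\fraku}_m = \Uzur$ whose associated graded recovers the chain $\fraku_0 \subset \cdots \subset \fraku_m = \gr\Uzur$. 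At each step one has a normal (though not in general central) extension $\wt{A}_{j+1} \subset \wt{\fraku}_{j+1}$ whose associated graded is the central polynomial ring $k[X_{j+1}^\epsilon]/((X_{j+1}^\epsilon)^q)$ for the appropriate $q \in \{p,\ell\}$. The LHS-type spectral sequence for the pair $(\wt{\fraku}_{j+1}, \wt{A}_{j+1})$ produces a class in $\opH^1(\wt{A}_{j+1},k)$ that transgresses to a class in $\opH^2(\wt{\fraku}_{j+1},k)$ whose image in the associated graded is precisely the generator of $R^*$ dual to $X_{j+1}^\epsilon$; pulling these classes through the filtration of subalgebras yields the desired lifts in $\opH^2(\Uzur,k)$.

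The technical heart of the argument, and the main obstacle, will be to verify that the non-central normal extensions $\wt{A}_{j+1} \subset \wt{\fraku}_{j+1}$ are well-behaved enough for the transgression to land in the right spot with the expected associated graded. While $(X_{j+1}^\epsilon)^q = 0$ holds in $\gr\Uzur$, the corresponding identity in $\Uzur$ produces strictly-lower-filtration correction terms coming from the explicit Lusztig comultiplication formulas (and from identities such as $F_\alpha^\ell = [\ell]_\alpha! F_\alpha^{(\ell)}$), and one must confirm that these corrections do not perturb the leading term of the intended transgression class. The pointed Hopf algebra techniques of Mastnak--Pevtsova--Schauenburg--Witherspoon \cite{Mastnak:2010} should apply here, since $\Uzur$ is pointed with grouplike subgroup $(\Z/\ell\Z)^n$, which is semisimple over $k$ under our assumptions on $p$ and $\ell$. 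Once the lifts are constructed, the proposition follows immediately from the general principle recorded in the first paragraph.
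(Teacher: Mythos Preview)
Your opening paragraph is correct: it suffices to exhibit, for each basis element of $R^*$, an actual cocycle in $C^2(\Uzur,k)$ whose image in $C^2(\gr\Uzur,k)$ represents that element. This is exactly what the paper does. But your proposed mechanism for producing those lifts does not work.

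The chain $\fraku_0,\fraku_1,\ldots,\fraku_m$ in Proposition~\ref{proposition:inductivealgebraiso} is a chain of \emph{quotients}, not subalgebras: $\fraku_0=\scrA$ is the infinite-dimensional twisted polynomial ring, and $\fraku_{j+1}=\fraku_j//(X_{j+1}^\epsilon)$, terminating at $\fraku_m=\gr\Uzur$. There is no parallel chain $\wt{\fraku}_0\subset\cdots\subset\wt{\fraku}_m=\Uzur$ inside the finite-dimensional algebra $\Uzur$, and your proposed lifts of the central generators are vacuous: $F_\gamma^\ell=[\ell]_\gamma^!\,F_\gamma^{(\ell)}=0$ in $\Uzur$ (and similarly $(F_\alpha^{(p^i\ell)})^p=0$), so the ``normal extensions $\wt A_{j+1}\subset\wt{\fraku}_{j+1}$'' you want to build are trivial. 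The pointed-Hopf-algebra machinery of \cite{Mastnak:2010} does not rescue this, since $\Uzur$ is not a Hopf subalgebra of $\Uzgr$ and has no grouplike elements beyond $1$.

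The paper's argument is much more hands-on. It first identifies, via the LHS spectral sequence for the pair $(\scrA,\scrZ)$, an explicit cocycle $f_{2,0}\in C^2(\gr\Uzur,k)$ representing the basis element of $R^*$ dual to $X_j^\epsilon$: namely, the functional on $(\gr\Uzur)_+^{\otimes 2}$ that sends $X_j^a\otimes X_j^b$ to $1$ when $a+b=\epsilon$ and all other monomials to $0$. It then simply writes down the obvious lift $f_2\in C^2(\Uzur,k)$ (replace $X_j$ by the corresponding $F_j\in\Uzur$) and checks directly, using the PBW commutation relations recorded in \cite[Lemma~2.1]{Drupieski:2011a}, that $\delta f_2=0$. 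No auxiliary filtrations or normal extensions of $\Uzur$ are needed; the whole point is that the cocycle condition for $f_2$ is a short concrete computation.
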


\begin{proof}
Consider the LHS spectral sequence for the pair $(\scrA,\scrZ)$  constructed in Theorem \ref{theorem:LHSspectralsequence}:
\begin{equation} \label{eq:LHSspecseqscrAscrZ}
E_2^{i,j} = \opH^i(\gr \Uzur,\opH^j(\scrZ,k)) \Rightarrow \opH^{i+j}(\scrA,k).
\end{equation}
It follows from the proof of Proposition \ref{proposition:inductivealgebraiso} that $R^* \subset \opH^2(\gr \Uzur,k)$ identifies with the image of of $E_2^{0,1} = \opH^1(\scrZ,k) \cong \Lambda^1(R^*) = R^*$ under the differential $d_2^{0,1}: E_2^{0,1} \rightarrow E_2^{2,0}$. Choose $j \in \set{1,\ldots,m}$, and let $x_j \in R^* \cong \opH^1(\scrZ,k)$ be dual to $X_j^\epsilon \in R$. Then a cocycle representative for $x_j$ in $C^1(\scrZ,k) \cong \Hom_k(\scrZ_+,k)$ is the linear map dual to the vector $X_j^\epsilon \in \scrZ_+$. We will determine an explicit cocycle representative in $C^2(\gr \Uzur,k)$ for $d_2^{0,1}(x_j) \in \opH^2(\gr \Uzur,k)$, and then show that this cocycle representative is induced by a cocycle in $C^2(\Uzur,k)$. Since cocycles in $\Cbul(\Uzur,k)$ become permanent cycles in \eqref{eq:grUzurspecseq}, this will prove the proposition.

The spectral sequence \eqref{eq:LHSspecseqscrAscrZ} is the spectral sequence arising from the column-wise filtration on the double complex $C = C^{\bullet,\bullet} = \Hom_{\scrA}(P^\bullet,Q_\bullet)$, where $P^\bullet = \Bbul(\scrA//\scrZ)$ is the left bar resolution of $\scrA//\scrZ$, and $Q_\bullet = Q_\bullet(k)$ is the coinduced resolution of the $\scrA$-module $k$. Write $d_h$ and $d_v$ for the horizontal and vertical differentials on $C$, induced by the differentials for $P^\bullet$ and $Q_\bullet$, respectively. Then the total differential $d$ of the total complex $\Tot(C)$ is $d = d_h + (-1)^i d_v$ (i.e., the vertical differential along the $i$-th column is replaced by $(-1)^i d_v$).

By \cite[Theorem 2.7]{McCleary:2001}, the $E_2^{0,1}$ term of \eqref{eq:LHSspecseqscrAscrZ} is represented by elements $(x,y) \in C^{0,1} \oplus C^{1,0}$ such that $d_v(x)=0$ and $d_h(x)-d_v(y)=0$, while $E_2^{2,0}$ is represented by elements of $(\ker d) \cap C^{2,0}$. The differential $d_2^{0,1}: E_2^{0,1} \rightarrow E_2^{2,0}$ is then induced by the total differential of $\Tot(C)$. We claim that $x_j \in E_2^{0,1}$ is represented by the element $f_{0,1} \oplus f_{1,0} \in C^{0,1} \oplus C^{1,0}$, and that $d_2^{0,1}(x_j) \in E_2^{2,0}$ is represented by $f_{2,0} \in C^{2,0}$, where the elements $f_{0,1}$, $f_{1,0}$, and $f_{2,0}$ are defined as follows:
\begin{itemize}
\item $f_{0,1} \in \Hom_k(\scrA_+ \otimes \scrA//\scrZ,k) \cong C^{0,1}$ evaluates to $1$ on the monomial $X_j^a \otimes X_j^b$ if $a \geq 1$ and $a + b = \epsilon$, and evaluates to zero on all other monomial basis elements of $\scrA_+ \otimes \scrA//\scrZ$.

\item $f_{1,0} \in \Hom_k((\scrA//\scrZ)_+,\Hom_k(\scrA//\scrZ,k)) \cong C^{1,0}$ sends the monomial $X_j^a$ ($1 \leq a < \varepsilon$) to the linear map $g_{1,0,a} \in \Hom_k(\scrA//\scrZ,k)$, and evaluates to zero on all other monomial basis elements of $(\scrA//\scrZ)_+$. For $1 \leq a < \epsilon$, the linear map $g_{1,0,a} \in \Hom_k(\scrA//\scrZ,k)$ is the function dual to the basis vector $X_j^{\epsilon - a} \in \scrA//\scrZ$.

\item $f_{2,0} \in \Hom_k((\scrA//\scrZ)_+^{\otimes 2},\Hom_k(\scrA//\scrZ,k))$ evaluates to zero on the vector $X_j^a \otimes X_j^b$ ($1 \leq a,b < \epsilon$) if $a+b \neq \epsilon$, evaluates to the counit $\varepsilon: \scrA//\scrZ \rightarrow k$ on the vector $X_j^a \otimes X_j^b$ if $a+b = \epsilon$, and evaluates to zero on all other monomial basis elements in $(\scrA//\scrZ)_+^{\otimes 2}$.
\end{itemize}

One can check from the definitions that $d_v(f_{0,1}) = 0$, $d_h(f_{0,1}) - d_v(f_{1,0}) = 0$, and $f_{2,0} = d_h(f_{1,0}) = d(f_{0,1} \oplus f_{1,0})$. Then $f_{2,0} \in (\ker d) \cap C^{2,0}$. In particular, $f_{2,0} \in \ker d_v|_{C^{2,0}}$. Also, the projectivity of $P^2$ as a module for $\scrA//\scrZ$ implies that
\begin{equation} \textstyle
\ker d_v|_{C^{2,0}} \cong \Hom_{\scrA//\scrZ}\left( P^2, \ker \left( d_v: Q_0^{\scrZ} \rightarrow Q_1^{\scrZ} \right) \right) \cong C^2(\scrA//\scrZ,k),
\end{equation}
because the kernel of $d_v: Q_0^{\scrZ} \rightarrow Q_1^{\scrZ}$ is the one-dimensional subspace of $\Hom_k(\scrA,k) \cong \Hom_\scrA(\scrA \otimes \scrA,k) = Q_0(k)$ spanned by the counit $\varepsilon: \scrA \rightarrow k$.

We must check that the image of $f_{0,1} \oplus f_{1,0}$ in $E_2^{0,1}$ is $x_j$. Since $\scrA$ is flat (in fact, free) as a right $\scrZ$-module, the cohomology ring $\opH^\bullet(\scrZ,k)$ may be computed by applying the functor $-^{\scrZ}$ to either an $\scrA$-injective or a $\scrZ$-injective resolution of $k$. Write $\Qbul(k)$ for the coinduced resolution of $k$ as an $\scrA$-module, and write $\Qbul'(k)$ for the coinduced resolution of $k$ as a $\scrZ$-module. Then $E_1^{0,\bullet} \cong \Qbul(k)^{\scrZ}$, and $d_1^{0,\bullet}: E_1^{0,\bullet} \rightarrow E_1^{0,\bullet+1}$ is induced by the differential of $\Qbul(k)$. On the other hand, $\Qbul'(k)^{\scrZ} \cong \Cbul(\scrZ,k)$ as complexes, and under the restriction homomorphism $\Qbul(k)^{\scrZ} \rightarrow \Qbul'(k)^{\scrZ}$, $f_{0,1}$ is mapped to the cocycle $f \in C^1(\scrZ,k)$ that is dual to the vector $X_j^\epsilon$. This proves the claim for $f_{0,1} \oplus f_{1,0}$.

We now define a cocycle $f_2 \in C^2(\Uzur,k)$ that induces $f_{2,0} \in \ker d_v|_{C^{2,0}} \cong C^2(\scrA//\scrZ,k)$. If $X_j = X_\alpha$ for some $\alpha \in \Phi^+$, set $F_j = F_\alpha$. If $X_j = X_{p^i \ell \alpha}$ for some $\alpha \in \Phi^+$ and some $i \geq 0$, set $F_j = F_\alpha^{(p^i\ell)}$. Now define $f_2 \in C^2(\Uzur) \cong \Hom_k(\Uzur_+^{\otimes 2},k)$ to be the linear map that evaluates to 1 on the monomial $F_j^a \otimes F_j^b$ if $a,b \geq 1$ and $a + b = \epsilon$, and evaluates to zero on all other monomial basis elements of $\Uzur_+^{\otimes 2}$. We claim that $f_2$ is a cocycle in $C^\bullet(\Uzur)$. Indeed, let $\delta$ be the differential of the cobar complex $\Cbul(\Uzur)$, and let $F^{(\mbf{a})},F^{(\mbf{b})},F^{(\mbf{c})} \in \Uzur_+$ ($\mbf{a},\mbf{b},\mbf{c} \in \N^N$) be monomial basis vectors for $\Uzur$. Consider $c:= (\delta f_2)(F^{(\mbf{a})} \otimes F^{(\mbf{b})} \otimes F^{(\mbf{c})}) = f_2(F^{(\mbf{a})} \otimes F^{(\mbf{b})} F^{(\mbf{c})} - F^{(\mbf{a})} F^{(\mbf{b})} \otimes F^{(\mbf{c})})$. If $c \neq 0$, then by the definition of $f_2$ we must have (up to a unit in $k$) $F^{(\mbf{a})} = F_j^a$ and $F^{(\mbf{c})} = F_j^c$ for some $a,c \geq 1$. If $c \neq 0$, then \cite[Lemma 2.1]{Drupieski:2011a} further implies that, up to a unit in $k$, $F^{(\mbf{b})} = F_j^b$ for some $b \geq 1$. Now $(\delta f_2)(F_j^a \otimes F_j^b \otimes F_j^c) = f_2(F_j^a \otimes F_j^{b+c} - F_j^{a+b} \otimes F_j^c)$, and this evaluates to zero for all possible combinations of $a,b,c$. Thus $f_2 \in \ker \delta$. Finally, it is clear that $f_2$ induces the map $f_{2,0}$, hence that $x_j \in R^* \subset \opH^2(\gr \Uzur,k)$ is a permanent cycle in \eqref{eq:grUzurspecseq}.
\end{proof}

\begin{remark}
Bendel, Nakano, Parshall and Pillen prove the previous proposition in the special case $r=0$ by a weight argument, though they must assume $\ell > 3$ whenever $\Phi$ has type $B$ or $C$; see \cite[Proposition 6.2.2]{Bendel:2011}. Our proof does not require the extra assumption on $\ell$.
\end{remark}

\begin{corollary} \label{corollary:Uzurcohomologyfg}
The cohomology ring $\opH^\bullet(\Uzur,k)$ is finitely generated.
\end{corollary}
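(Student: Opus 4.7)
The plan is to apply the multiplicative spectral sequence \eqref{eq:grUzurspecseq} together with Proposition \ref{proposition:inductivealgebraiso} (taking $j=m$, so $R_m = R$) and Proposition \ref{proposition:Rpermanentcycles}. By the former,
\[
E_1 = \opH^\bullet(\gr \Uzur, k) \cong \Lambda_{\zeta,r}^\bullet \otimes S^\bullet(R^*)
\]
is a finitely generated module over its Noetherian polynomial subring $S^\bullet(R^*)$, because $\Lambda_{\zeta,r}^\bullet$ is finite-dimensional. By Proposition \ref{proposition:Rpermanentcycles}, the generating subspace $R^* \subset E_1$ consists of permanent cycles; since the differentials are graded derivations, the full polynomial subring $S^\bullet(R^*)$ consists of permanent cycles. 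It follows that $E_\infty$, being a subquotient of $E_1$ stable under multiplication by $S^\bullet(R^*)$, is finitely generated as a module over $S^\bullet(R^*)$.

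Next I would lift to the abutment. The proof of Proposition \ref{proposition:Rpermanentcycles} produces, for each generator $x_j \in R^*$, an explicit cocycle $f_2 \in C^2(\Uzur,k)$ whose cohomology class $\tilde{x}_j \in \opH^2(\Uzur,k)$ projects to $x_j$ in $E_\infty$ under the edge map. Let $\tilde{S} \subseteq \opH^\bullet(\Uzur,k)$ denote the subalgebra generated by the $\tilde{x}_j$, and choose lifts $\tilde{m}_1, \ldots, \tilde{m}_s \in \opH^\bullet(\Uzur,k)$ of a finite set of module generators of $E_\infty$ over $S^\bullet(R^*)$. A standard induction on filtration degree should then show that the $\tilde{m}_i$ generate $\opH^\bullet(\Uzur,k)$ as a $\tilde{S}$-module: given $y \in F^a \opH^n(\Uzur,k)$, one writes the image of $y$ in $E_\infty^{a,n-a}$ as an $S^\bullet(R^*)$-combination of (the components of) the $m_i$, lifts this to a $\tilde{S}$-combination of the $\tilde{m}_i$, and subtracts to obtain a remainder in $F^{a+1}\opH^n(\Uzur,k)$. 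The induction terminates because the filtration on each $\opH^n(\Uzur,k)$ is finite — in each fixed total degree the bigraded piece $E_1^{\bullet,n-\bullet}$ has only finitely many nonzero summands, as is visible from the explicit description of $E_1$. Consequently $\opH^\bullet(\Uzur,k)$ is finitely generated as a module over the finitely generated subalgebra $\tilde{S}$, hence finitely generated as a $k$-algebra.

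The main obstacle is the bookkeeping in the lifting step: one must verify that the explicit cocycles of Proposition \ref{proposition:Rpermanentcycles} have the asserted filtration behavior in $\opH^\bullet(\Uzur,k)$, so that the images of the $\tilde{x}_j$ in the associated graded really are the prescribed $x_j \in S^\bullet(R^*)$. This is essentially built into the construction of $f_2$ there, but requires care in identifying the relevant bidegrees. Note that no commutativity hypothesis on $\tilde{S}$ is needed: commutativity of the $x_j$ inside $S^\bullet(R^*) \subseteq E_\infty$ is enough to push through the inductive lifting, because each step of the induction is performed on the associated graded where $S^\bullet(R^*)$ is a polynomial ring.
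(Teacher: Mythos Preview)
Your proof is correct and follows the same route as the paper: the paper's one-line proof invokes Proposition~\ref{proposition:Rpermanentcycles} together with \cite[Lemma~2.5]{Mastnak:2010}, and what you have written is precisely a self-contained version of that cited lemma in the present context (a multiplicative spectral sequence whose $E_1$-page is a finite module over a Noetherian subalgebra of permanent cycles, hence so is each $E_r$ and $E_\infty$, and then one lifts to the abutment using boundedness of the filtration in each total degree). Your caveat about the filtration behavior of the explicit lifts $\tilde{x}_j$ is well taken but not an obstruction: since the spectral sequence is multiplicative, it suffices that each $\tilde{x}_j$ lies in the correct filtration piece so that its associated-graded image is $x_j$, and this is exactly what the construction of the cocycle $f_2$ in Proposition~\ref{proposition:Rpermanentcycles} provides.
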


\begin{proof}
This follows from Proposition \ref{proposition:Rpermanentcycles} and \cite[Lemma 2.5]{Mastnak:2010}.
\end{proof}

Let $A$ be an arbitrary augmented algebra over $k$, and let $M$ be a left $A$-module. Then $\opH^\bullet(A,M) = \Ext_A^\bullet(k,M)$ is a right graded $\opH^\bullet(A,k) = \Ext_A^\bullet(k,k)$-module via Yoneda composition of extensions.

\begin{theorem}
Set $A$ equal to either $\Uzur$ or $\Uzbr$, and let $M$ be a finite-dimensional $A$-module. Then $\opH^\bullet(A,M)$ is a finite module for the Noetherian algebra $\opH^\bullet(A,k)$.
\end{theorem}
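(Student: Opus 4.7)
The plan is to extend the filtration argument already used to prove Corollary~\ref{corollary:Uzurcohomologyfg} so that it applies to arbitrary finite-dimensional coefficients, and then to descend from $\Uzur$ to $\Uzbr$ via the Lyndon--Hochschild--Serre spectral sequence. Treating first $A = \Uzur$, I would equip $M$ with a finite exhausting increasing filtration by $\Uzur$-submodules compatible with the algebra filtration of Lemma~\ref{lemma:Nfiltration} (for instance $F_i M = (\Uzur)_i \cdot M$, which terminates at $M$ since $\Uzur$ is finite-dimensional). Filtering the cobar complex $\Cbul(\Uzur, M) = \Hom_k(\Uzur_+^{\otimes \bullet}, M)$ accordingly gives a convergent spectral sequence
\[
E_1^{i,j}(M) = \opH^{i+j}(\gr \Uzur, \gr M)_{(i)} \Rightarrow \opH^{i+j}(\Uzur, M),
\]
which is a module over \eqref{eq:grUzurspecseq} via the cup product pairing $\Cbul(\Uzur, k) \otimes \Cbul(\Uzur, M) \to \Cbul(\Uzur, M)$.

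The task then splits into showing that the $E_1$-page is finitely generated as a module over $S^\bullet(R^*) \subset \opH^\bullet(\gr \Uzur, k)$, and lifting this finiteness through the spectral sequence. For the first, I would iterate the inductive construction of Proposition~\ref{proposition:inductivealgebraiso} with $\gr M$ in place of $k$: at each stage $j$, the LHS spectral sequence $\opH^\bullet(\fraku_{j+1}, \opH^\bullet(A_{j+1}, \gr M)) \Rightarrow \opH^\bullet(\fraku_j, \gr M)$ attached to the central polynomial subalgebra $A_{j+1} \subset \fraku_j$ generated by $X_{j+1}^\epsilon$ has $\opH^\bullet(A_{j+1}, \gr M)$ visibly finitely generated over $\opH^\bullet(A_{j+1}, k)$, because $A_{j+1}$ is a one-generator commutative algebra and $\gr M$ is finite-dimensional; induction on $j$ preserves finite generation. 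For the second, Proposition~\ref{proposition:Rpermanentcycles} exhibits $S^\bullet(R^*)$ as a subring of permanent cycles in \eqref{eq:grUzurspecseq}, and the explicit cocycles $f_2$ constructed in its proof act on $E_\bullet(M)$ by cup product, so they remain permanent cycles there. A standard Evens-style convergence argument (cf.\ \cite[Lemmas~7.4.4--7.4.5]{Evens:1991} or \cite[Lemma~2.5]{Mastnak:2010}) now yields that $\opH^\bullet(\Uzur, M)$ is finitely generated over the image of $S^\bullet(R^*)$ in $\opH^\bullet(\Uzur, k)$, hence over the Noetherian ring $\opH^\bullet(\Uzur, k)$.

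For $A = \Uzbr$, I would use that $\Uzur$ is normal in $\Uzbr$ with $\Uzbr // \Uzur \cong \Uztr$, and that $\Uzbr$ is free over $\Uzur$, to obtain a module spectral sequence
\[
E_2^{i,j}(N) = \opH^i(\Uztr, \opH^j(\Uzur, N)) \Rightarrow \opH^{i+j}(\Uzbr, N), \qquad N \in \set{k, M},
\]
with the $M$-version a module over the $k$-version. The quotient $\Uztr \cong k[(\Z/\ell)^n] \otimes \Dist(T_r)$ is a finite-dimensional commutative Hopf algebra, so $\opH^\bullet(\Uztr, V)$ is finitely generated over the Noetherian ring $\opH^\bullet(\Uztr, k)$ for any finite-dimensional $V$. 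Combined with the $\Uzur$-case applied to $N = k$ and $N = M$, the $E_2$-page for $M$ is a finitely generated module over the $E_2$-page for $k$, and standard spectral-sequence bookkeeping propagates this to finite generation of $\opH^\bullet(\Uzbr, M)$ over the Noetherian ring $\opH^\bullet(\Uzbr, k)$ (finite generation of the latter follows either by specializing this argument to $M = k$, or by running the filtration argument directly on $\Uzbr$, noting that $\gr \Uzbr \cong \gr \Uzur \# \Uztr$ by Lemma~\ref{lemma:grUzgr}).

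The main obstacle will be the inductive step verifying that $\opH^\bullet(\gr \Uzur, \gr M)$ is finitely generated over $S^\bullet(R^*)$: the clean tensor-product isomorphism of Proposition~\ref{proposition:inductivealgebraiso} is unavailable at the level of coefficients, so one must rerun the five-step inductive argument with $\gr M$ throughout and confirm in particular that no new relations of the form ``$cv$'' (as in Step~4 of its proof) emerge to destroy finite generation. Everything else---compatibility of cup products with filtrations, the module structure on the LHS spectral sequence, and Evens-type convergence---is formal once the $E_1$-page has been controlled.
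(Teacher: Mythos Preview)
Your proposal would work, but it is substantially more elaborate than the paper's argument, and you overlook two simplifications that make the result nearly immediate.

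For $A = \Uzur$, the paper does not touch the filtration or spectral sequence machinery at all. Instead it uses the fact (from \cite[\S 3.1]{Drupieski:2011a}) that the trivial module $k$ is the \emph{unique} simple $\Uzur$-module. Hence any finite-dimensional $M$ admits a composition series with every factor isomorphic to $k$, and a routine induction on $\dim M$ using the long exact sequence in cohomology, together with the Noetherianity of $\opH^\bullet(\Uzur,k)$ already established in Corollary~\ref{corollary:Uzurcohomologyfg}, finishes the case in a few lines. Incidentally, your ``main obstacle'' dissolves for a related reason: since $1 \in (\Uzur)_0$, the filtration $F_i M = (\Uzur)_i \cdot M$ you propose has $F_0 M = M$, so $\gr M$ is concentrated in degree zero and the augmentation ideal of $\gr \Uzur$ acts trivially on it. Thus $\opH^\bullet(\gr \Uzur, \gr M) \cong \opH^\bullet(\gr \Uzur, k) \otimes M$, and there is no need to rerun Proposition~\ref{proposition:inductivealgebraiso} with nontrivial coefficients.

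For $A = \Uzbr$, the paper exploits that $\Uztr$ is \emph{semisimple} over $k$ (citing \cite[Lemma A 3.4]{Du:1994}), so the LHS spectral sequence you write down collapses at $E_2$ to yield $\opH^\bullet(\Uzbr, M) = \opH^\bullet(\Uzur, M)^{\Uztr}$ and $\opH^\bullet(\Uzbr, k) = \opH^\bullet(\Uzur, k)^{\Uztr}$. The passage from the $\Uzur$-statement to the $\Uzbr$-statement is then a one-line appeal to \cite[Lemma 1.13]{Friedlander:1986} on taking invariants of a finitely generated module under a completely reducible action. Your route via a possibly noncollapsed spectral sequence would also reach the conclusion, but the semisimplicity of $\Uztr$ (which your own description $\Uztr \cong k[(\Z/\ell)^n] \otimes \Dist(T_r)$ already implies, both tensor factors being semisimple since $\gcd(\ell,p)=1$ and $T_r$ is diagonalizable) makes that extra bookkeeping unnecessary.
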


\begin{proof}
First suppose $A = \Uzur$. Then $\opH^\bullet(A,k)$ is Noetherian by Corollary \ref{corollary:Uzurcohomologyfg}. By \cite[\S 3.1]{Drupieski:2011a}, the trivial module $k$ is the unique simple module for $A$. Then there exists a filtration of $M$ by submodules $M = M_0 \supset M_1 \supset \cdots \supset M_s \supset M_{s+1}=0$ such that, for each $0 \leq i \leq s$, $M_i/M_{i+1} \cong k$. Now by a standard argument using induction on the dimension of $M$ and the long exact sequence in cohomology, $\opH^\bullet(A,M)$ is a finite module over the Noetherian algebra $\opH^\bullet(A,k)$.

Now suppose $A = \Uzbr$. Then
\[
\opH^\bullet(A,M) = \opH^\bullet(\Uzur,M)^{\Uztr} \quad \text{and} \quad \opH^\bullet(A,k) = \opH^\bullet(\Uzur,k)^{\Uztr},
\]
as can be seen by considering the LHS spectral sequence for the algebra $\Uzbr$ and its normal subalgebra $\Uztr$, and by using the fact that all finite-dimensional representations of $\Uztr$ are semisimple \cite[Lemma A 3.4]{Du:1994}. Since the right action of $\opH^\bullet(\Uzur,k)$ on $\opH^\bullet(\Uzur,M)$ is a $\Uztr$-module homomorphism, and since $\Uztr$ acts completely reducibly on $\opH^\bullet(\Uzur,k)$ and $\opH^\bullet(\Uzur,M)$, the theorem then follows from \cite[Lemma 1.13]{Friedlander:1986}.
\end{proof}

\subsection{Finite complexity} \label{subsection:finitecomplexity}

Let $V = \bigoplus_{n \in \N} V_n$ be a graded vector space having finite-dimensional homogeneous components. Define the rate of growth $\gamma(V)$ to be the least non-negative integer $c \in \N$ such that there exists $b \in \R$ for which $\dim V_n \leq bn^{c-1}$ for all $n \in \N$. If no such $c$ exists, set $\gamma(V) = \infty$. Then, for example, if $V_n$ is the vector space of homogeneous degree $n$ polynomials in the polynomial ring $k[x_1,\ldots,x_s]$, then $\gamma(V) = s$.

Let $A$ be a finite-dimensional algebra. Recall that the \emph{complexity} $\cx_A(M)$ of an $A$-module $M$ is defined as the rate of growth $\gamma(P^\bullet)$ of a minimal projective resolution $P^\bullet$ for $M$. In particular, $\cx_A(k) = \gamma(\opH^\bullet(A,k))$. If $\opH^\bullet(A,k)$ is finitely-generated, then $\cx_A(k) < \infty$. If $A$ is a Hopf algebra, then $\cx_A(M) \leq \cx_A(k)$ for all finite-dimensional $A$-modules $M$ \cite[Proposition 2.1]{Feldvoss:2010}.

Friedlander and Suslin \cite{Friedlander:1997} proved that the cohomology ring $\opH^\bullet(G_r,k)$ for the Frobenius kernel $G_r$ of $G$ is finitely-generated. They did this by embedding the group $G$ into $GL_n$ for some $n \in \N$, and then reducing to the case of Frobenius kernels for $GL_n$. Unfortunately, such a strategy for quantum groups is problematic, because there is no obvious embedding of an arbitrary quantum group into one for which the underlying root system is of type $A$. Still, the following theorem provides circumstantial evidence for the finite generation of $\opH^\bullet(\Uzgr),k)$, by showing that the complexity of the trivial module is finite.

\begin{theorem} \label{theorem:finitecomplexity}
Let $M$ be a finite-dimensional $\Uzgr$-module. Then
\[
\cx_{\Uzgr}(M) \leq (r+1)(\dim(\g) - \rank(\g)).
\]
If $\ell > h$ and if $p = \chr(k)$ is good for $\Phi$, then $\cx_{\Uzgr}(M) \leq (r+1) \cdot \cx_{\uzg}(k)$.
\end{theorem}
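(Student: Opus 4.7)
The plan is to reduce to bounding the complexity of the trivial module and then to exploit the algebra filtration of Section~\ref{subsection:algebrafiltrations}. Because $\Uzgr$ is a finite-dimensional Hopf algebra, tensoring a minimal projective resolution of $k$ with $M$ gives a projective resolution of $M$ via the usual tensor identity, so $\cx_{\Uzgr}(M) \leq \cx_{\Uzgr}(k) = \gamma(\opH^\bullet(\Uzgr,k))$, as in \cite[Proposition 2.1]{Feldvoss:2010}. It therefore suffices to establish $\gamma(\opH^\bullet(\Uzgr,k)) \leq 2(r+1)N$, where $N = |\Phi^+|$; note that $2N = \dim\g - \rank\g$.

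The main step is to pass to the associated graded algebra. The multiplicative $\N$-filtration of Lemma~\ref{lemma:Nfiltration} induces a convergent spectral sequence with $E_1$-page $\opH^\bullet(\gr\Uzgr,k)$ abutting to $\opH^\bullet(\Uzgr,k)$, which I expect to yield the pointwise comparison $\dim\opH^n(\Uzgr,k) \leq \dim\opH^n(\gr\Uzgr,k)$ for every $n$. Then I would apply Lemma~\ref{lemma:grUzgr} to write $\gr\Uzgr \cong (\gr\Uzur \otimes \gr\Uzurp) \# \Uztr$. Since $\Uztr$ is semisimple by \cite[Lemma A3.4]{Du:1994}, its higher cohomology vanishes, so passing to $\Uztr$-invariants and applying the K\"unneth formula yields
\[
\opH^\bullet(\gr\Uzgr,k) \cong \left( \opH^\bullet(\gr\Uzur,k) \otimes \opH^\bullet(\gr\Uzurp,k) \right)^{\Uztr} \subseteq \opH^\bullet(\gr\Uzur,k) \otimes \opH^\bullet(\gr\Uzurp,k).
\]
Proposition~\ref{proposition:inductivealgebraiso} (applied at $j=m$) identifies $\opH^\bullet(\gr\Uzur,k) \cong \Lambda_{\zeta,r}^\bullet \otimes S^\bullet(R^*)$, the tensor product of a finite-dimensional algebra with a polynomial ring on the $(r+1)N$-dimensional space $R^*$ placed in degree two, whose rate of growth is therefore $(r+1)N$; the same bound holds for $\gr\Uzurp$ by symmetry. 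Combining these in the display above yields $\gamma(\opH^\bullet(\gr\Uzgr,k)) \leq 2(r+1)N$, which gives the first bound.

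For the refined inequality, when $\ell > h$ and $p$ is good for $\Phi$, Theorem~\ref{theorem:uzgcohomology} identifies $\opH^{2\bullet}(\uzg,k) \cong k[\calN]$, a finitely-generated commutative algebra of Krull dimension $\dim\calN = \dim\g - \rank\g$; hence $\cx_{\uzg}(k) = \gamma(\opH^\bullet(\uzg,k)) = \dim\g - \rank\g$, and the bound $(r+1)\cx_{\uzg}(k)$ agrees with the first one. The main technical point to verify is the pointwise dimension estimate from the filtration spectral sequence; I expect this to hold because the filtration on $\Uzgr$ is exhaustive, positive, and in each cohomological degree involves only finitely many filtration steps, so the spectral sequence converges strongly and the dimension bounds transfer to the abutment.
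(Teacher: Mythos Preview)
Your proposal is correct and follows essentially the same route as the paper's proof: reduce to $M=k$ via \cite[Proposition 2.1]{Feldvoss:2010}, use the filtration spectral sequence to bound $\gamma(\opH^\bullet(\Uzgr,k))$ by $\gamma(\opH^\bullet(\gr\Uzgr,k))$, apply Lemma~\ref{lemma:grUzgr} and semisimplicity of $\Uztr$ to pass to $\opH^\bullet(\gr\Uzur,k)\otimes\opH^\bullet(\gr\Uzurp,k)$, and invoke Proposition~\ref{proposition:inductivealgebraiso} for the polynomial bound; the refined statement likewise comes from Theorem~\ref{theorem:uzgcohomology}. Your hedging about the spectral-sequence dimension estimate is unnecessary---the filtration is bounded in each cohomological degree, so strong convergence and the pointwise inequality are immediate.
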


\begin{proof}
By \cite[Proposition 2.1]{Feldvoss:2010}, it suffices to treat the case $M = k$. The multiplicative filtration on $\Uzgr$ described in Lemma \ref{lemma:Nfiltration} gives rise to the spectral sequence
\[
E_1^{i,j} = \opH^{i+j}(\gr \Uzgr,k)_{(i)} \Rightarrow \opH^{i+j}(\Uzgr,k).
\]
Then $\cx_{\Uzgr}(k) = \gamma(\opH^\bullet(\Uzgr,k)) \leq \gamma(\opH^\bullet(\gr \Uzgr,k))$. The algebra $\Uztr$ is semisimple over $k$, and $\gr \Uzgr = (\gr \Uzur \otimes \gr \Uzurp) \# \Uztr$ by Lemma \ref{lemma:grUzgr}, hence
\[
\opH^\bullet(\gr \Uzgr,k) = \opH^\bullet(\gr \Uzur \otimes \gr \Uzurp,k)^{\Uztr}.
\]
Then $\gamma(\opH^\bullet(\gr \Uzgr,k)) \leq \gamma(\opH^\bullet(\gr \Uzur \otimes \gr \Uzurp,k))$. By Proposition \ref{proposition:inductivealgebraiso},
\[
\opH^\bullet(\gr \Uzur \otimes \gr \Uzurp,k) \cong \opH^\bullet(\gr \Uzur,k) \otimes \opH^\bullet(\gr \Uzurp,k)
\]
is finitely generated over a polynomial ring in $(r+1)(\dim(\g) - \rank(\g))$ indeterminates. Finally, if $\ell > h$ and if $p$ is good for $\Phi$, then $\cx_{\uzg}(k) = \dim(\g) - \rank(\g)$.
\end{proof}

\begin{remark}
One can compare Theorem \ref{theorem:finitecomplexity} to a result of Nakano \cite[Theorem 2.4]{Nakano:1995}, which states that $\cx_{G_r}(k) \leq r \cdot \cx_{G_1}(k)$. If $p > h$, then $\cx_{G_1}(k) = \dim(\g) - \rank(\g)$.
\end{remark}

\section*{Acknowledgements}

This paper is based in part on the author's PhD thesis. As a graduate student, the author was supported in part by NSF grant DMS-0701116. While preparing this paper, the author was supported in part by NSF VIGRE grant DMS-0738586.


\begin{thebibliography}{MPSW}

\bibitem[AJS]{Andersen:1994}
H.~H. Andersen, J.~C. Jantzen, and W.~Soergel, {\em Representations of quantum
  groups at a {$p$}th root of unity and of semisimple groups in characteristic
  {$p$}: independence of {$p$}}, Ast\'erisque (1994), no.~220, 1--321.

\bibitem[AJ]{Andersen:1984}
H.~H. Andersen and J.~C. Jantzen, \href{http://dx.doi.org/10.1007/BF01450762}
  {{\em Cohomology of induced representations for algebraic groups}}, Math.
  Ann. \textbf{269} (1984), no.~4, 487--525.

\bibitem[AK]{Andersen:1992a}
H.~H. Andersen and W.~Kexin, \href{http://dx.doi.org/10.1515/crll.1992.427.35}
  {{\em Representations of quantum algebras. {T}he mixed case}}, J. Reine
  Angew. Math. \textbf{427} (1992), 35--50.

\bibitem[APK]{Andersen:1992}
H.~H. Andersen, P.~Polo, and W.~Kexin, \href{http://dx.doi.org/10.2307/2374770}
  {{\em Injective modules for quantum algebras}}, Amer. J. Math. \textbf{114}
  (1992), no.~3, 571--604.

\bibitem[APW]{Andersen:1991}
H.~H. Andersen, P.~Polo, and K.~X. Wen,
  \href{http://dx.doi.org/10.1007/BF01245066} {{\em Representations of quantum
  algebras}}, Invent. Math. \textbf{104} (1991), no.~1, 1--59.

\bibitem[Bar]{Barnes:1985}
D.~W. Barnes, {\em Spectral sequence constructors in algebra and topology},
  Mem. Amer. Math. Soc. \textbf{53} (1985), no.~317, viii+174.

\bibitem[BNPP]{Bendel:2011}
C.~P. Bendel, D.~K. Nakano, B.~J. Parshall, and C.~Pillen, {\em Cohomology for
  quantum groups via the geometry of the nullcone}, 2011, \href
  {http://arxiv.org/abs/1102.3639} {\path{arXiv:1102.3639}}.

\bibitem[CPS1]{Cline:1977}
E.~Cline, B.~Parshall, and L.~Scott, {\em Induced modules and affine
  quotients}, Math. Ann. \textbf{230} (1977), no.~1, 1--14.

\bibitem[CPS2]{Cline:1980}
\bysame, {\em Cohomology, hyperalgebras, and representations}, J. Algebra
  \textbf{63} (1980), no.~1, 98--123.

\bibitem[DCP]{De-Concini:1993}
C.~De~Concini and C.~Procesi, {\em Quantum groups}, {$D$}-modules,
  representation theory, and quantum groups ({V}enice, 1992), Lecture Notes in
  Math., vol. 1565, Springer, Berlin, 1993, pp.~31--140.

\bibitem[Dru1]{Drupieski:2011a}
C.~M. Drupieski, \href{http://dx.doi.org/10.1093/imrn/rnq156} {{\em On
  injective modules and support varieties for the small quantum group}}, Int.
  Math. Res. Not. \textbf{2011} (2011), no.~10, 2263--2294.

\bibitem[Dru2]{Drupieski:2009}
C.~M. Drupieski,
  \href{http://gateway.proquest.com/openurl?url_ver=Z39.88-2004&rft_val_fmt=info:ofi/fmt:kev:mtx:dissertation&res_dat=xri:pqdiss&rft_dat=xri:pqdiss:3400875}
  {{\em Cohomology of {F}robenius-{L}usztig kernels of quantized enveloping
  algebras}}, Ph.D. thesis, University of Virginia, 2009, Thesis
  (Ph.D.)--University of Virginia, p.~143.

\bibitem[DS]{Du:1994}
J.~Du and L.~Scott, \href{http://dx.doi.org/10.1515/crll.1994.455.141} {{\em
  Lusztig conjectures, old and new. {I}}}, J. Reine Angew. Math. \textbf{455}
  (1994), 141--182.

\bibitem[Eve]{Evens:1991}
L.~Evens, {\em The cohomology of groups}, Oxford Mathematical Monographs, The
  Clarendon Press Oxford University Press, New York, 1991, Oxford Science
  Publications.

\bibitem[FW]{Feldvoss:2010}
J.~Feldvoss and S.~Witherspoon, {\em Support varieties and representation type
  of small quantum groups}, Int. Math. Res. Not. IMRN (2010), no.~7,
  1346--1362.

\bibitem[Fie]{Fiebig:2008}
P.~Fiebig, {\em An upper bound on the exceptional characteristics for
  {L}usztig's character formula}, 2008, \href {http://arxiv.org/abs/0811.1674}
  {\path{arXiv:0811.1674}}.

\bibitem[FP1]{Friedlander:1986}
E.~M. Friedlander and B.~J. Parshall,
  \href{http://dx.doi.org/10.1007/BF01450727} {{\em Cohomology of infinitesimal
  and discrete groups}}, Math. Ann. \textbf{273} (1986), no.~3, 353--374.

\bibitem[FP2]{Friedlander:1986a}
\bysame, \href{http://dx.doi.org/10.2307/2374473} {{\em Cohomology of {L}ie
  algebras and algebraic groups}}, Amer. J. Math. \textbf{108} (1986), no.~1,
  235--253 (1986).

\bibitem[FS]{Friedlander:1997}
E.~M. Friedlander and A.~Suslin, \href{http://dx.doi.org/10.1007/s002220050119}
  {{\em Cohomology of finite group schemes over a field}}, Invent. Math.
  \textbf{127} (1997), no.~2, 209--270.

\bibitem[GK]{Ginzburg:1993}
V.~Ginzburg and S.~Kumar,
  \href{http://dx.doi.org/10.1215/S0012-7094-93-06909-8} {{\em Cohomology of
  quantum groups at roots of unity}}, Duke Math. J. \textbf{69} (1993), no.~1,
  179--198.

\bibitem[Gor]{Gordon:2000}
I.~G. Gordon, {\em Cohomology of quantized function algebras at roots of
  unity}, Proc. London Math. Soc. (3) \textbf{80} (2000), no.~2, 337--359.

\bibitem[Gue]{Guerrier:1968}
W.~J. Guerrier, {\em The factorization of the cyclotomic polynomials mod p},
  Amer. Math. Monthly \textbf{75} (1968), 46.

\bibitem[Hum1]{Humphreys:1978}
J.~E. Humphreys, {\em Introduction to {L}ie algebras and representation
  theory}, Graduate Texts in Mathematics, vol.~9, Springer-Verlag, New York,
  1978, Second printing, revised.

\bibitem[Hum2]{Humphreys:1995}
\bysame, {\em Conjugacy classes in semisimple algebraic groups}, Mathematical
  Surveys and Monographs, vol.~43, American Mathematical Society, Providence,
  RI, 1995.

\bibitem[Jan1]{Jantzen:1996}
J.~C. Jantzen, {\em Lectures on quantum groups}, Graduate Studies in
  Mathematics, vol.~6, American Mathematical Society, Providence, RI, 1996.

\bibitem[Jan2]{Jantzen:2003}
\bysame, {\em Representations of algebraic groups}, second ed., Mathematical
  Surveys and Monographs, vol. 107, American Mathematical Society, Providence,
  RI, 2003.

\bibitem[Kop]{Koppinen:1983}
M.~Koppinen, {\em Decomposing and lifting hyperalgebras}, Ann. Univ. Turku.
  Ser. A I (1983), no.~184, 97.

\bibitem[KLT]{Kumar:1999}
S.~Kumar, N.~Lauritzen, and J.~F. Thomsen,
  \href{http://dx.doi.org/10.1007/s002220050320} {{\em Frobenius splitting of
  cotangent bundles of flag varieties}}, Invent. Math. \textbf{136} (1999),
  no.~3, 603--621.

\bibitem[Lin]{Lin:1993}
Z.~Z. Lin, \href{http://dx.doi.org/10.1006/jabr.1993.1010} {{\em Induced
  representations of {H}opf algebras: applications to quantum groups at roots
  of {$1$}}}, J. Algebra \textbf{154} (1993), no.~1, 152--187.

\bibitem[Lus1]{Lusztig:1989}
G.~Lusztig, {\em Modular representations and quantum groups}, Classical groups
  and related topics ({B}eijing, 1987), Contemp. Math., vol.~82, Amer. Math.
  Soc., Providence, RI, 1989, pp.~59--77.

\bibitem[Lus2]{Lusztig:1990a}
G.~Lusztig, \href{http://dx.doi.org/10.1007/BF00147341} {{\em Quantum groups at
  roots of {$1$}}}, Geom. Dedicata \textbf{35} (1990), no.~1-3, 89--113.

\bibitem[MPSW]{Mastnak:2010}
M.~Mastnak, J.~Pevtsova, P.~Schauenburg, and S.~Witherspoon,
  \href{http://dx.doi.org/10.1112/plms/pdp030} {{\em Cohomology of
  finite-dimensional pointed {H}opf algebras}}, Proc. Lond. Math. Soc. (3)
  \textbf{100} (2010), no.~2, 377--404.

\bibitem[McC]{McCleary:2001}
J.~McCleary, {\em A user's guide to spectral sequences}, second ed., Cambridge
  Studies in Advanced Mathematics, vol.~58, Cambridge University Press,
  Cambridge, 2001.

\bibitem[Mon]{Montgomery:1993}
S.~Montgomery, {\em Hopf algebras and their actions on rings}, CBMS Regional
  Conference Series in Mathematics, vol.~82, Published for the Conference Board
  of the Mathematical Sciences, Washington, DC, 1993.

\bibitem[Nak1]{Nakano:1995}
D.~K. Nakano, {\em A bound on the complexity for {$G\sb rT$} modules}, Proc.
  Amer. Math. Soc. \textbf{123} (1995), no.~2, 335--341.

\bibitem[Nak2]{Nakano:1996a}
\bysame, {\em Complexity and support varieties for finite-dimensional
  algebras}, Lie algebras and their representations ({S}eoul, 1995), Contemp.
  Math., vol. 194, Amer. Math. Soc., Providence, RI, 1996, pp.~201--218.

\bibitem[Osb]{Osborne:2000}
M.~S. Osborne, {\em Basic homological algebra}, Graduate Texts in Mathematics,
  vol. 196, Springer-Verlag, New York, 2000.

\bibitem[RH]{Ryom-Hansen:2003}
S.~Ryom-Hansen, {\em A {$q$}-analogue of {K}empf's vanishing theorem}, Mosc.
  Math. J. \textbf{3} (2003), no.~1, 173--187, 260.

\bibitem[Sul]{Sullivan:1978b}
J.~B. Sullivan, {\em Simply connected groups, the hyperalgebra, and {V}erma's
  conjecture}, Amer. J. Math. \textbf{100} (1978), no.~5, 1015--1019.

\bibitem[vdK]{Kallen:2007}
W.~van~der Kallen, {\em A reductive group with finitely generated cohomology
  algebras}, Algebraic groups and homogeneous spaces, Tata Inst. Fund. Res.
  Stud. Math., Tata Inst. Fund. Res., Mumbai, 2007, pp.~301--314.

\end{thebibliography}

\providecommand{\bysame}{\leavevmode\hbox to3em{\hrulefill}\thinspace}

\end{document}